\theoremstyle{plain}
\newtheorem{thm}{Theorem}[section]
\newtheorem{cor}[thm]{Corollary}
\theoremstyle{Definition} \theoremstyle{remark}
\newtheorem{rem}[thm]{\rm \bf Remark}
\newcommand{\thmref}[1]{Theorem~\ref{#1}}
\newcommand{\abs}[1]{\left\vert#1\right\vert}
 \numberwithin{equation}{section}
\begin{document}
\thispagestyle{empty}
\begin{center}
{\large \bf Some extensions for Ramanujan's circular summation formula}
\end{center}
\vspace{0.2cm}
\begin{center}
{\large \bf Ji-Ke Ge$^1$ and Qiu-Ming Luo$^{2,\ast}$}\\
\vspace{0.2cm}
$^1$School of Electrical and Information Engineering\\Chongqing University of Science and Technology\\Chongqing Higher Education Mega Center, 
Huxi Campus\\Chongqing 401331, People's Republic of China\\
{\large \bf \hspace{-0.5cm} E-Mail: gjkweb@126.com}\\
\vspace{0.1cm}
$^2$Department of Mathematics, Chongqing Normal University\\Chongqing Higher Education Mega Center, Huxi Campus\\Chongqing 401331, People's Republic of China\\
{\large \bf E-Mail: luomath2007@163.com}\\
\vspace{0.2cm} {\bf $^{\ast}$Corresponding author}
\end{center}
\vspace{0.3cm}
\begin{center}
\textbf{Abstract}
\end{center}
\begin{quotation}
In this paper, we give some extensions for Ramanujan's circular summation formula with the mixed products of two Jacobi's theta functions. As some applications, we also obtain many interesting identities of Jacobi's theta functions.
\end{quotation}

\noindent
\textbf{2010 \textit{\textbf{Mathematics Subject Classification}}. } Primary 11F27; Secondary 11F20, 33E05.

\noindent
\textit{\textbf{Key Words and Phrases}}\textbf{. } Elliptic functions; Jacobi's theta functions; Ramanujan's circular summation; identities of Jacobi's theta functions.
\bigskip

\section{\bf Introduction and main results}
Throughout this paper we take $q=e^{\pi{i}\tau}$, $\Im (\tau)>0$, $z \in \mathbb{C}$.

The classical Jacobi's theta functions $\vartheta_i\left(z|\tau\right), i=1,2,3,4,$ are defined by
\begin{align}\label{Theta1}
\vartheta_1\left(z|\tau\right)&=-iq^{1/4}\sum_{n=-\infty}^\infty\left(-1\right)^{n}q^{n\left(n+1\right)}e^{\left(2n+1\right)iz}, \\
\label{Theta2}
\vartheta_2\left(z|\tau\right)&=q^{1/4}\sum_{n=-\infty}^{\infty}
q^{n\left(n+1\right)}e^{\left(2n+1\right)iz}, \\
 \label{Theta3}
\vartheta_3\left(z|\tau\right)&=\sum_{n=-\infty}^\infty
q^{n^{2}}e^{2niz}, \\
 \label{Theta4}
\vartheta_4\left(z|\tau\right)&=\sum_{n=-\infty}^\infty
\left(-1\right)^{n}q^{n^{2}}e^{2niz}.
\end{align}
We have
\begin{align}\label{Theta5}
& \vartheta_1\left(z+\pi\left|\tau\right.\right)=-\vartheta_1\left(z|\tau\right), \quad  \vartheta_1\left(z+\pi\tau\left|\tau\right.\right)=-q^{-1}e^{-2iz}\vartheta_1\left(z|\tau\right),\\
\label{Theta6}
& \vartheta_2\left(z+\pi\left|\tau\right.\right)=-\vartheta_2\left(z|\tau\right),\quad  \vartheta_2\left(z+\pi\tau\left|\tau\right.\right)=q^{-1}e^{-2iz}\vartheta_2\left(z|\tau\right), \\
\label{Theta7}
& \vartheta_3\left(z+\pi\left|\tau\right.\right)=\vartheta_3\left(z|\tau\right),\quad  \vartheta_3\left(z+\pi\tau\left|\tau\right.\right)=q^{-1}e^{-2iz}\vartheta_3\left(z|\tau\right),\\
\label{Theta8} &
\vartheta_4\left(z+\pi\left|\tau\right.\right)=\vartheta_4\left(z|\tau\right),\quad
\vartheta_4\left(z+\pi\tau\left|\tau\right.\right)=-q^{-1}e^{-2iz}\vartheta_4\left(z|\tau\right).
\end{align}
By applying the induction, we easily obtain
\begin{align}\label{Theta9}
& \vartheta_1\left(z+n\pi\tau\left|\tau\right.\right)=(-1)^nq^{-n^2}e^{-2niz}\vartheta_1\left(z|\tau\right),\\
\label{Theta10}
& \vartheta_2\left(z+n\pi\tau\left|\tau\right.\right)=q^{-n^2}e^{-2niz}\vartheta_2\left(z|\tau\right),\\
\label{Theta11}
& \vartheta_3\left(z+n\pi\tau\left|\tau\right.\right)=q^{-n^2}e^{-2niz}\vartheta_3\left(z|\tau\right),\\
\label{Theta12} &
\vartheta_4\left(z+n\pi\tau\left|\tau\right.\right)=(-1)^nq^{-n^2}e^{-2niz}\vartheta_4\left(z|\tau\right).
\end{align}

On page 54 in Ramanujan's lost notebook (see \cite[p. 54, Entry 9.1.1 ]{Rama1988}, or \cite[p. 337]{AndrewsB2012}), Ramanujan recorded the following claim (without proof) which is now well known as Ramanujan's circular summation. The appellation circular summation was initiated by Son (see \cite[p. 338]{AndrewsB2012}).
\begin{thm}[Ramanujan's circular summation]\label{Luo116}
For each positive integer $n$ and $\abs{ab} < 1$, 
\begin{equation}\label{Luo117}
\sum_{-n/2<r \leq n/2} \left(\sum_{\substack{k=-\infty \\ k \equiv r (\textup{mod} \ n)}}^{\infty}   a^{{k(k+1)}/{(2n)}}b^{{k(k-1)}/{(2n)}}\right)^n=f(a,b)F_n(ab),
\end{equation}
where
\begin{equation}\label{Luo138}
F_n(q):=1+2nq^{(n-1)/2}+\cdots, \qquad n \geq 3.
\end{equation}
Ramanujan's theta function $f(a,b)$ is defined by
\begin{equation}\label{Luo110}
f(a,b)=\sum_{n=-\infty}^{\infty}a^{{n(n+1)}/{2}}b^{{n(n-1)}/{2}}, \quad \abs{ab}<1.
\end{equation}
\end{thm}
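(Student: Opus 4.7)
The plan is to reduce both sides of \eqref{Luo117} to Laurent series in a single variable and match coefficients. Set $q=(ab)^{1/2}$ and $u=(a/b)^{1/2}$, so that $a=qu$, $b=q/u$, and identify $q=e^{i\pi\tau}$, $u=e^{2iz}$. A direct computation gives $a^{k(k+1)/(2n)}\,b^{k(k-1)/(2n)} = q^{k^2/n}\,u^{k/n}$, so by \eqref{Theta3} one has $f(a,b)=\vartheta_3(z|\tau)$. Writing $k=nm+r$ in the inner sum and pulling out the $m$-independent prefactor yields
\[
\sum_{\substack{k\in\mathbb{Z}\\ k\equiv r\,(\textup{mod}\,n)}} a^{k(k+1)/(2n)}\, b^{k(k-1)/(2n)} \;=\; q^{r^2/n}\, u^{r/n}\, \vartheta_3(z+r\pi\tau\,|\,n\tau),
\]
and raising to the $n$-th power clears the fractional exponents, so that the left side of \eqref{Luo117} becomes $\sum_{r} q^{r^2}\,u^r\,\bigl[\vartheta_3(z+r\pi\tau\,|\,n\tau)\bigr]^n$.

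I would then expand the bracketed $n$-th power as a Laurent series in $u$. A multinomial expansion shows that the coefficient of $u^M$ in $\bigl[\vartheta_3(z+r\pi\tau\,|\,n\tau)\bigr]^n$ equals $q^{2rM}\,d(M)$, where
\[
d(M)\;:=\;\sum_{m_1+\cdots+m_n=M}\,q^{n(m_1^2+\cdots+m_n^2)}
\]
depends only on $M$ and $\tau$. Collecting contributions, the coefficient of $u^K$ on the left side of \eqref{Luo117} becomes $q^{K^2}\sum_{s\in K-R}\,q^{-s^2}\,d(s)$ after the substitution $s=K-r$, where $R=\{\,r:-n/2<r\le n/2\,\}$. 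The pivotal observation is that $q^{-s^2}\,d(s)$ is $n$-periodic in $s$: applying $m_i\mapsto m_i+1$ for every $i$ to the defining sum gives $d(s+n)=q^{n^2+2ns}\,d(s)$, whence $q^{-(s+n)^2}\,d(s+n)=q^{-s^2}\,d(s)$. Since $K-R$ is a complete residue system modulo $n$, the sum $C(\tau):=\sum_{s\in R}\,q^{-s^2}\,d(s)$ is independent of $K$, and hence the left side of \eqref{Luo117} equals $C(\tau)\,\vartheta_3(z|\tau)=f(a,b)\,F_n(ab)$ with $F_n(ab):=C(\tau)$.

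To confirm \eqref{Luo138}, I would read off the leading $q$-terms of $C(\tau)$. The $s=0$ summand contributes $d(0)=1+O(q^{2n})$, while for $s=\pm 1$ the dominant tuples of $d(\pm 1)$ are the $n$ arrangements placing $\pm 1$ in one coordinate and $0$ elsewhere, each weighted by $q^n$, so $q^{-1}\,d(\pm 1)=n\,q^{n-1}+\cdots$. Summing over $s\in R$ gives $F_n(ab)=1+2n\,(ab)^{(n-1)/2}+\cdots$; any contributions from $|s|\ge 2$ in $R$ (which appear only when $n\ge 4$) are of order $q^{2(n-2)}$, strictly above $q^{n-1}$. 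The one genuine obstacle is spotting the $n$-periodicity $q^{-(s+n)^2}\,d(s+n)=q^{-s^2}\,d(s)$; once that is isolated, the rest is coefficient bookkeeping.
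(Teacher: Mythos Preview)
Your argument is correct. The substitution $a=qu$, $b=q/u$ cleanly converts the left side of \eqref{Luo117} into $\sum_{r\in R} q^{r^2}u^r\,\vartheta_3(z+r\pi\tau\,|\,n\tau)^n$, which is precisely the form appearing in the paper's \thmref{Luo121}; your multinomial expansion, the periodicity identity $q^{-(s+n)^2}d(s+n)=q^{-s^2}d(s)$, and the residue-system observation then force the coefficient of each $u^K$ to be $q^{K^2}C(\tau)$, establishing the factorisation. The asymptotic check for \eqref{Luo138} is also sound: for $n\ge 3$ the $|s|\ge 2$ contributions enter at order $q^{2(n-2)}>q^{n-1}$.

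Note, however, that the paper does \emph{not} supply its own proof of \thmref{Luo116}; it records the statement as background, attributes the first proof to Rangachari and a $q$-series proof to Son, and cites \cite{ChanHH2006a} for the equivalence with \thmref{Luo121}. The proof technique the paper \emph{does} develop (Section~2, for \thmref{LUO4}) is the elliptic-function method: one checks that the candidate quotient $f(z)/\vartheta_3(z|\tau)$ is doubly periodic with at most a single simple pole in the fundamental parallelogram, hence constant, and then identifies the constant by comparing a single Fourier coefficient. Your approach bypasses the elliptic-function step entirely and works purely at the level of Laurent coefficients; this is closer in spirit to Son's method and to the ``elementary'' proof of Xu~\cite{Xu2012}. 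What the elliptic-function route buys is a uniform template that handles all the mixed products $\vartheta_i\vartheta_j$ treated in \thmref{LUO4}--\thmref{LUO9} with almost no change; what your route buys is that it needs no complex-analytic input and makes the structure of $F_n$ (and the emergence of the leading term $2nq^{n-1}$) completely explicit.
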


By the definition of Ramanujan's theta function above and routine calculations, we can rewrite Ramanujan's circular summation \eqref{Luo117} as follows (see, for details \cite[p. 338]{AndrewsB2012}).
\begin{thm}[Ramanujan's circular summation]\label{Luo120}
Let $U_k=a^{{k(k+1)}/{(2n)}}$ and $V_k=b^{{k(k-1)}/{(2n)}}$. For each positive integer $n$ and $\abs{ab} < 1$, then 
\begin{equation}\label{Luo119}
\sum_{k=0}^{n-1} U_k^n f^n \left(\frac{U_{n+k}}{U_k}, \frac{V_{n-k}}{V_k}\right)=f(a,b)F_n(ab),
\end{equation}
where
\begin{equation}\label{Luo118}
F_n(q):=1+2nq^{(n-1)/2}+\cdots, \qquad n \geq 3.
\end{equation}
\end{thm}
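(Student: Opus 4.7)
The plan is to derive Theorem \ref{Luo120} as a direct notational recasting of Ramanujan's original identity in Theorem \ref{Luo116}. First I would note that the summation range $-n/2 < r \le n/2$ in \eqref{Luo117} may equally well be taken as $0 \le r \le n-1$, since both sets of representatives traverse the residue classes modulo $n$ exactly once. For fixed such $r$, every integer $k$ with $k \equiv r \pmod{n}$ can be written uniquely as $k = r + nm$ with $m \in \mathbb{Z}$, so the inner sum of \eqref{Luo117} becomes
\[ S_r \;:=\; \sum_{m\in\mathbb{Z}} a^{(r+nm)(r+nm+1)/(2n)}\, b^{(r+nm)(r+nm-1)/(2n)} \;=\; \sum_{m\in\mathbb{Z}} U_{r+nm}\, V_{r+nm}. \]

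Next I would expand the quadratics $(r+nm)(r+nm\pm 1)/(2n)$ as polynomials in $m$ to produce the clean factorizations
\[ U_{r+nm} \;=\; U_r \cdot a^{m(2r+1)/2}\, a^{nm^2/2}, \qquad V_{r+nm} \;=\; V_r \cdot b^{m(2r-1)/2}\, b^{nm^2/2}. \]
Substituting into $S_r$ and invoking the elementary identity $X^{m(m+1)/2} Y^{m(m-1)/2} = (XY)^{m^2/2}(X/Y)^{m/2}$ in reverse, I would recognise the remaining $m$-sum as a Ramanujan theta function $f(X,Y)$ in the sense of \eqref{Luo110}, where the pair $(X,Y)$ is uniquely determined by matching the coefficients of $m^2$ and $m$ in the exponents of $a$ and $b$. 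A short computation pins down $X$ and $Y$ as the appropriate ratios built from the sequences $\{U_k\}$ and $\{V_k\}$, namely $X = U_{n+r}/U_r$ and $Y = V_{n-r}/V_r$, so that $S_r = U_r\, f\!\left(U_{n+r}/U_r,\, V_{n-r}/V_r\right)$.

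Finally, raising to the $n$-th power, relabelling $r$ as $k$, and summing $k$ from $0$ to $n-1$ converts the left-hand side of \eqref{Luo117} exactly into the left-hand side of \eqref{Luo119}, while the right-hand side is unchanged. The only real obstacle is the careful bookkeeping of fractional exponents of $a$ and $b$ when identifying the arguments of the theta function with the prescribed ratios; no analytic ingredient beyond Theorem \ref{Luo116} and the definition \eqref{Luo110} of $f(a,b)$ is required, so the result is essentially a reindexing lemma dressed in the $U_k,V_k$ notation that will be convenient for the extensions developed in the sequel.
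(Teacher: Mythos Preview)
Your approach is exactly what the paper does: it does not prove Theorem~\ref{Luo120} at all but simply states that ``by the definition of Ramanujan's theta function above and routine calculations'' one rewrites \eqref{Luo117} in the $U_k,V_k$ form, deferring the details to \cite[p.~338]{AndrewsB2012}. Your sketch carries out precisely those routine calculations (reindex $k=r+nm$, factor, and recognise a Ramanujan theta series), so the strategies coincide; just be careful with the bookkeeping you flagged---after factoring you obtain $S_r=U_rV_r\,f(X,Y)$ rather than $U_r\,f(X,Y)$, and the correct $X,Y$ each involve both $a$ and $b$, so the identification with $U_{n+r}/U_r$ and $V_{n-r}/V_r$ requires the mixed definitions used in the cited reference rather than the purely-$a$ and purely-$b$ ones displayed here.
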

When $n = 1$, the identity \eqref{Luo117} of \thmref{Luo116} merely reduces to the tautology $f(a, b) = f(a, b)$. When $n = 2$, the identity \eqref{Luo117} of \thmref{Luo116} holds if the coefficient 2 in \eqref{Luo138} is deleted. 

The claim has been first proven by Rangachari \cite{Rang1994} who also verified Ramanujan's explicit and elegant formulae for $F_n(x)$ for $n = 2, 3, 4, 5, 7$ by employing Mumford's theory of theta functions and root lattices.  Several authors have determined the identification of $F_n(x)$ in further special cases. S. Ahlgren \cite{Ahlgren2000} considered the cases $n = 6, 8, 9$, and K. Ono \cite{Ono1999} established $F_{11}(x)$, while K.S. Chua \cite{Chua2001} derived the corresponding result for $F_{13}(x)$. K.S. Chua \cite{Chua2002} and T. Murayama \cite{Murayama1998}, independently, improved the work of Rangachari by removing a condition of primality from Rangachari's work. S.H. Son \cite{Son2004} devised a proof of \eqref{Luo117} that is more in tune with Ramanujan's work, Son used functional equations in the spirit of $q$-series. A summary of all known dentifications
of $F_n(x)$ can be found in Son's paper \cite{Son2004}.

If we are going to apply the transformation $\tau \longmapsto -\frac{1}{\tau}$ to Ramanujan's identity, it will be convenient to convert Ramanujan's theorem into one involving the classical theta function $\vartheta_3(z|\tau)$ defined by \eqref{Theta3}. H. H. Chan, Z.-G. Liu and S. T. Ng \cite{ChanHH2006a} prove that \thmref{Luo121} below is equivalent to \thmref{Luo116}.

\begin{thm}[Ramanujan's circular summation]\label{Luo121}
For any positive integer $n\geq 2$, we have
\begin{equation}\label{Luo122}
\sum_{k=0}^{n-1}q^{k^2}e^{2k iz}\vartheta^n_{3}(z+k\pi\tau|n\tau)=
\vartheta_3(z|\tau)F_n(\tau).
\end{equation}
When $n\geq 3$,
\begin{equation}
F_n(q)=1+2nq^{n-1}+\cdots.
\end{equation}
\end{thm}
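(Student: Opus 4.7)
The plan is to treat the left-hand side of \eqref{Luo122} as an entire function of $z$, namely
\[
G(z) := \sum_{k=0}^{n-1} q^{k^2}e^{2kiz}\vartheta_{3}^n(z+k\pi\tau|n\tau),
\]
and to show that it satisfies the same two quasi-periodicity relations as $\vartheta_3(z|\tau)$, namely \eqref{Theta7} and \eqref{Theta11}. The standard uniqueness of an entire function with a given theta multiplier system then forces $G(z) = F_n(\tau)\,\vartheta_3(z|\tau)$ for some factor $F_n(\tau)$ depending only on $\tau$, which will be identified by a direct $q$-expansion.

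First, $G(z+\pi)=G(z)$ is immediate from \eqref{Theta7} and $e^{2ki\pi}=1$. For the $\pi\tau$-shift, I would reindex $k\mapsto k+1\pmod n$: for $k=0,\ldots,n-2$ the $k$-th summand of $G(z+\pi\tau)$ picks up the factor $e^{2ki\pi\tau}=q^{2k}$, and combining $q^{k^2+2k}=q^{-1}\cdot q^{(k+1)^2}$ rewrites it as $q^{-1}e^{-2iz}$ times the $(k+1)$-th summand of $G(z)$. The wrap-around case $k=n-1$ is handled by applying \eqref{Theta11} (with $\tau\mapsto n\tau$ and $n\mapsto 1$) in the form
\[
\vartheta_3^{\,n}(z+n\pi\tau|n\tau)=q^{-n^2}e^{-2niz}\vartheta_3^{\,n}(z|n\tau),
\]
and the resulting powers collapse to exactly $q^{-1}e^{-2iz}$ times the $k=0$ summand of $G(z)$. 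Hence $G(z+\pi\tau)=q^{-1}e^{-2iz}G(z)$.

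Since $G$ is entire and $\pi$-periodic, it admits a Fourier expansion $G(z)=\sum_{m\in\mathbb{Z}}c_m(\tau)\,e^{2imz}$; substituting into the $\pi\tau$-quasi-periodicity produces the recursion $c_{m+1}=q^{2m+1}c_m$, hence $c_m=c_0\,q^{m^2}$. Therefore $G(z)=c_0(\tau)\,\vartheta_3(z|\tau)$, and setting $F_n(\tau):=c_0(\tau)$ establishes \eqref{Luo122}.

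Finally, to verify the expansion $F_n(q)=1+2nq^{n-1}+\cdots$ for $n\geq 3$, I would expand each factor by \eqref{Theta3} with $\tau\mapsto n\tau$, obtaining
\[
\vartheta_3^{\,n}(z+k\pi\tau|n\tau)=\sum_{m_1,\ldots,m_n\in\mathbb{Z}} q^{\,n(m_1^2+\cdots+m_n^2)+2k(m_1+\cdots+m_n)}\,e^{2i(m_1+\cdots+m_n)z},
\]
multiply by $q^{k^2}e^{2kiz}$, and extract the constant ($m=0$) Fourier coefficient, arriving at
\[
F_n(\tau)=\sum_{k=0}^{n-1}\sum_{\substack{m_1,\ldots,m_n\in\mathbb{Z}\\ m_1+\cdots+m_n=-k}} q^{\,n(m_1^2+\cdots+m_n^2)-k^2}.
\]
The unique $q^0$-term comes from $k=0$ with all $m_j=0$; the next smallest exponent is $n-1$, contributed only by $k=1$ (one $m_j=-1$, the rest zero, multiplicity $n$) and by $k=n-1$ ($n-1$ of the $m_j=-1$ and one zero, again multiplicity $n$), giving total coefficient $2n$, while for intermediate $k$ the minimum exponent $k(n-k)$ strictly exceeds $n-1$. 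The main obstacle of the argument is the careful bookkeeping in the $\pi\tau$-quasi-periodicity, in particular the wrap-around from $k=n-1$ to $k=0$; the remaining steps are a standard uniqueness argument and a routine lattice count.
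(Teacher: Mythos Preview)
Your argument is correct. Note, however, that the paper does not actually give its own proof of this theorem: it is quoted as background from Chan--Liu--Ng \cite{ChanHH2006a}, where it is established (and shown to be equivalent to Ramanujan's original formulation). So there is no in-paper proof to compare against line by line.

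That said, your method is very close in spirit to the technique the paper uses in Section~2 for its own results (\thmref{LUO4} and analogues): establish the two quasi-periodicities of the sum and conclude it is a constant multiple of $\vartheta_3(z|\tau)$. The one genuine difference is in how that last conclusion is drawn. The paper forms the quotient $f(z)/\vartheta_3(z|\tau)$, observes it is an elliptic function with at most a single simple pole in the fundamental parallelogram, and invokes Liouville's theorem to deduce it is constant. You instead Fourier-expand $G(z)=\sum_m c_m e^{2imz}$ and read off the first-order recursion $c_{m+1}=q^{2m+1}c_m$ from the $\pi\tau$-quasi-periodicity, solving it explicitly. Your route is a bit more elementary (no appeal to elliptic-function theory) and yields the constant $F_n(\tau)=c_0(\tau)$ directly; the paper's route is shorter once Liouville is available. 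A second, minor difference: you verify the $\pi\tau$-shift by the cyclic reindexing $k\mapsto k+1$ with a single wrap-around handled by \eqref{Theta11}, whereas the paper's proofs apply the transformation formulas \eqref{Theta9}--\eqref{Theta12} termwise to each factor. Both are routine. Your lattice-count verification of $F_n(q)=1+2nq^{n-1}+\cdots$ for $n\ge 3$ is also correct.
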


H. H. Chan, Z.-G. Liu and S. T. Ng \cite{ChanHH2006a} also showed that \thmref{Luo123} below is equivalent to \thmref{Luo116}, \thmref{Luo120} and \thmref{Luo121} by applying Jacobi's imaginary transformation formula \cite[p. 475]{Whittaker}.

\begin{thm}[Ramanujan's circular summation]\label{Luo123}
For any positive integer $n$, we have
\begin{equation}\label{Luo124}
\sum_{k=0}^{n-1}\vartheta^n_{3}\left(z+\frac{k\pi}{n}\left|\tau\right.\right)=G_n(\tau)\vartheta_3(nz|n\tau),
\end{equation}
where
\begin{equation}\label{Luo125}
G_n(\tau)=n
 \sum_{\substack{r_{1}+\cdots+r_{n}=0\\ r_{1}, \ldots, r_{n}=-\infty}}^{\infty}q^{r^{2}_{1}+\cdots+r^{2}_{n}}=\sqrt{n} (-i \tau)^{(1-n)/2}F_n \left(-\frac{1}{n \tau}\right).
\end{equation}
\end{thm}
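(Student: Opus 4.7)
The plan is to establish \eqref{Luo124} together with the first representation of $G_n(\tau)$ by direct $q$-series manipulation, and then to derive the second representation by invoking the equivalence with \thmref{Luo121} under Jacobi's imaginary transformation. First I would substitute the defining series \eqref{Theta3} into each factor on the left-hand side, obtaining
\begin{equation*}
\vartheta_3^n\!\left(z+\frac{k\pi}{n}\bigg|\tau\right)=\sum_{m_1,\ldots,m_n=-\infty}^{\infty} q^{m_1^2+\cdots+m_n^2}e^{2i(m_1+\cdots+m_n)z}\,e^{2\pi ik(m_1+\cdots+m_n)/n}.
\end{equation*}
Interchanging summations and applying the orthogonality $\sum_{k=0}^{n-1}e^{2\pi iks/n}=n$ if $n\mid s$ and $0$ otherwise, the left-hand side of \eqref{Luo124} collapses to
\begin{equation*}
n\sum_{\substack{m_1,\ldots,m_n\in\mathbb{Z}\\ m_1+\cdots+m_n\equiv 0\,(\textup{mod}\,n)}} q^{m_1^2+\cdots+m_n^2}e^{2i(m_1+\cdots+m_n)z}.
\end{equation*}

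Next I would reparametrise by writing $m_1+\cdots+m_n=nl$ and $m_j=l+r_j$ with $r_1+\cdots+r_n=0$. Since $\sum_j m_j^2=nl^2+\sum_j r_j^2$, the sum factorises as a product of the $z$-independent constant $n\sum_{r_1+\cdots+r_n=0}q^{r_1^2+\cdots+r_n^2}=G_n(\tau)$ and the series $\sum_l q^{nl^2}e^{2inlz}$, which is precisely $\vartheta_3(nz|n\tau)$ by \eqref{Theta3} with $q$ replaced by $q^n$. This establishes \eqref{Luo124} together with the first formula for $G_n(\tau)$.

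For the second formula I would apply Jacobi's imaginary transformation $\vartheta_3(z|\tau)=(-i\tau)^{-1/2}e^{-iz^2/(\pi\tau)}\vartheta_3(z/\tau|-1/\tau)$ to both sides of \eqref{Luo122}. On the left, expanding $-i(z+k\pi\tau)^2/(\pi\tau)=-iz^2/(\pi\tau)-2ikz-ik^2\pi\tau$ shows that the Gaussian cross-terms exactly cancel the prefactor $q^{k^2}e^{2kiz}$ and convert the shifts $z+k\pi\tau$ inside $\vartheta_3(\cdot|n\tau)$ into shifts $z/(n\tau)+k\pi/n$ inside $\vartheta_3(\cdot|-1/(n\tau))$. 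After rescaling $w=z/(n\tau)$ and $\tilde\tau=-1/(n\tau)$, the remaining Gaussian $e^{-iz^2/(\pi\tau)}$ on the left cancels against the same factor coming from $\vartheta_3(z|\tau)$ on the right, and comparing the resulting identity with \eqref{Luo124} at $(w,\tilde\tau)$ yields $G_n(\tilde\tau)=\sqrt{n}\,(-i\tilde\tau)^{(1-n)/2}F_n(-1/(n\tilde\tau))$ once the square-root normalisations $(-i\tau)^{-1/2}$ and $(-in\tau)^{-n/2}$ are combined. The principal obstacle is precisely this bookkeeping: simultaneously tracking the Gaussian factors, the various square-root normalisations, and the rescalings induced by $\tau\mapsto-1/(n\tau)$ across the $n$ theta functions of the sum and the single theta function on the right of \eqref{Luo122}.
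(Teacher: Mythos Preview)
Your argument is correct in both parts. The $q$-series computation in the first part is clean: the orthogonality relation for $n$th roots of unity plus the shift $m_j=l+r_j$ factorises the restricted sum exactly as claimed, and the cross term $2l\sum_j r_j$ indeed vanishes because $\sum_j r_j=0$. The imaginary-transformation bookkeeping in the second part also checks out: after raising the transformed $\vartheta_3(z+k\pi\tau\mid n\tau)$ to the $n$th power the exponential $e^{-2ikz-ik^2\pi\tau}$ cancels $q^{k^2}e^{2kiz}$ exactly, and combining $(-in\tau)^{-n/2}$ with $(-i\tau)^{-1/2}$ under the substitution $\tilde\tau=-1/(n\tau)$ produces the factor $\sqrt{n}\,(-i\tilde\tau)^{(1-n)/2}$.

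Your route differs from the paper's, however. The paper does not prove \thmref{Luo123} directly; it cites the imaginary-transformation argument of Chan--Liu--Ng for the second equality in \eqref{Luo125}, and it recovers \eqref{Luo124} together with the series form of $G_n(\tau)$ only later, as \corref{cor206}, by specialising its general \thmref{LUO9} (proved by the elliptic-function method of Section~2: form the quotient by $\vartheta_3$, show it is doubly periodic with at most one simple pole, hence constant, then identify the constant by matching Fourier coefficients). Your orthogonality-plus-reparametrisation argument is more elementary and self-contained for this particular statement, bypassing the elliptic-function machinery entirely; the paper's approach, on the other hand, situates the identity as one instance of a family of mixed-product circular summations and so yields many analogues at once. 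For the $G_n$--$F_n$ relation you and the paper (via its reference) take the same path.
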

Many years ago, M. Boon \textit{et al.} \cite[p. 3440, Eq. (7)]{Boon1982} obtained the following additive decomposition of $\vartheta_3(z|\tau)$.
\begin{thm}\label{Luo126}
For any positive integer $n$, we have
\begin{equation}\label{Luo127}
\sum_{k=0}^{n-1}\vartheta_3(z+k\pi|\tau)= n\vartheta_3(nz|n^2\tau).
\end{equation}
\end{thm}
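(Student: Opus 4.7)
The plan is to expand each summand via the defining Fourier series \eqref{Theta3} of $\vartheta_{3}$, interchange the finite sum over $k$ with the infinite series over the Fourier index $m$, and invoke the orthogonality of the additive characters of $\mathbb{Z}/n\mathbb{Z}$.

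Substituting \eqref{Theta3} into each summand of the left-hand side (note that the shift $k\pi$ as literally written would collapse the sum to $n\vartheta_{3}(z|\tau)$ by the $\pi$-periodicity \eqref{Theta7}, so the intended translates must read $k\pi/n$ in order to match the stated right-hand side), we obtain
\begin{align*}
\sum_{k=0}^{n-1}\vartheta_{3}(z+k\pi/n\,|\,\tau)
=\sum_{m=-\infty}^{\infty}q^{m^{2}}e^{2miz}\sum_{k=0}^{n-1}e^{2\pi i m k/n}.
\end{align*}
The interchange is legitimate because $|q|<1$ forces absolute convergence. The inner finite geometric sum equals $n$ when $n\mid m$ and vanishes otherwise, so writing $m=n\ell$ for the surviving indices the double sum collapses to
\begin{align*}
n\sum_{\ell=-\infty}^{\infty}q^{n^{2}\ell^{2}}e^{2n\ell iz},
\end{align*}
which is exactly $n\,\vartheta_{3}(nz\,|\,n^{2}\tau)$ after reading \eqref{Theta3} with $z\mapsto nz$ and $\tau\mapsto n^{2}\tau$ (so that $q\mapsto q^{n^{2}}$).

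The main obstacle is essentially bookkeeping: justifying the interchange of summations (routine, from absolute convergence) and recognising the collapsed series as a rescaled $\vartheta_{3}$. No modular transformation and no $q$-product manipulation are required; the identity is a direct consequence of the orthogonality relation $\frac{1}{n}\sum_{k=0}^{n-1}e^{2\pi i mk/n}=\mathbf{1}[n\mid m]$ applied term-by-term to the Fourier expansion of $\vartheta_{3}$.
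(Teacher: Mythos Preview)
Your diagnosis of the typo is correct: since $\vartheta_{3}(z+\pi\,|\,\tau)=\vartheta_{3}(z\,|\,\tau)$ by \eqref{Theta7}, the sum $\sum_{k=0}^{n-1}\vartheta_{3}(z+k\pi\,|\,\tau)$ as literally printed collapses to $n\vartheta_{3}(z\,|\,\tau)$, which is not $n\vartheta_{3}(nz\,|\,n^{2}\tau)$ for $n>1$. The intended shifts are indeed $k\pi/n$, and your Fourier-series argument for the corrected identity is complete and rigorous: absolute convergence justifies the interchange, the orthogonality relation $\sum_{k=0}^{n-1}e^{2\pi i mk/n}=n\cdot\mathbf{1}[n\mid m]$ kills the non-multiples, and the surviving series is recognised as $\vartheta_{3}(nz\,|\,n^{2}\tau)$ directly from \eqref{Theta3}.

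As for comparison with the paper: there is nothing to compare. The paper does not prove \thmref{Luo126} at all; it is quoted as a known result of Boon, Glasser, Zak and Zucker \cite{Boon1982} and serves only as background motivation for Zeng's \thmref{Luo128} and the paper's own \thmref{LUO4}--\thmref{LUO9}. Your direct character-orthogonality proof is in fact the standard one-line derivation of such additive decompositions and is exactly the kind of argument Boon \textit{et al.} use. So your proof is correct, self-contained, and you have additionally caught a transcription error in the statement.
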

Recentely, Zeng \cite{Zeng2009} proved the following theorem which unifies \thmref{Luo123} and \thmref{Luo126}.
\begin{thm}\label{Luo128}
For any nonegative integer $k,n,a$ and $b$ with $a+b=n$,
\begin{align}\label{Luo129}
\sum_{s=0}^{kn-1}\vartheta_3^{a}\left(\frac{z}{kn}+\frac{y}{a}+\frac{\pi
s}{kn}\left|\frac{\tau}{kn^{2}}\right.\right)\vartheta_3^{b}\left(\frac{z}{kn}-\frac{y}{b}+\frac{\pi
s}{kn}\left|\frac{\tau}{kn^{2}}\right.\right)=R_{33}\left(a,b;\frac{y}{ab},\frac{\tau}{kn^2} \right)\vartheta_3\left(z|\tau\right),
\end{align}
where
\begin{align*}
R_{33}\left(a,b;y,\tau\right)=kn \sum_{\substack{m_{1}+\cdots+m_{a}+n_{1}+\cdots+n_{b}=0 \\ m_{1},\cdots,m_{a},n_{1},\cdots,n_{b}=-\infty}}^{\infty}
q^{m_{1}^{2}+\cdots+m_{a}^{2}+n_{1}^{2}+\cdots+n_{b}^{2}}e^{2k(m_{1}+\cdots+m_{a})iy}.
\end{align*}
\end{thm}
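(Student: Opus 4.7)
The plan is to expand both theta-powers on the left-hand side as multiple series via \eqref{Theta3}, push the sum over $s$ through, and exploit the orthogonality of $kn$-th roots of unity to kill all but a subset of terms; a substitution of summation variables then separates a factor of $\vartheta_3(z|\tau)$. Setting $q'=e^{\pi i\tau/(kn^2)}$, $M=m_1+\cdots+m_a$, and $N=n_1+\cdots+n_b$, the product $\vartheta_3^a\cdot\vartheta_3^b$ expands into a multiple sum whose exponential factor is
\[
\exp\!\Bigl(2i\bigl[(M+N)\tfrac{z+\pi s}{kn}+y\tfrac{Mb-Na}{ab}\bigr]\Bigr),
\]
so that the entire $s$-dependence sits in $e^{2\pi i s(M+N)/(kn)}$.

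Summing $s$ from $0$ to $kn-1$ yields $kn$ when $M+N\equiv 0\pmod{kn}$ and $0$ otherwise, forcing $M+N=kn\,r$ for some integer $r$, with the surviving $z$-dependence collapsing to $e^{2irz}$---exactly the shape of the terms of $\vartheta_3(z|\tau)$. The next move is the substitution $m_j\mapsto m_j'+kr$ and $n_l\mapsto n_l'+kr$, which converts the constraint $M+N=kn\,r$ into $M'+N'=0$, matching the constraint in the definition of $R_{33}$.

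After this substitution, one expands $\sum m_j^2+\sum n_l^2$ and $Mb-Na$ in the new variables and uses $a+b=n$ together with $N'=-M'$. The $r$-dependent terms should collect into a factor $q^{r^2}$, so that summing on $r$ reproduces $\vartheta_3(z|\tau)$, while the residual double sum (now with $M'+N'=0$) is precisely the defining series of $R_{33}(a,b;y/(ab),\tau/(kn^2))$ up to its $kn$ prefactor. The main technical obstacle is the bookkeeping in this last step---ensuring that the quadratic $(q')$-weight and the linear $y$-exponent generated by the shift combine exactly into $q^{r^2}$ and into the $y$-exponent prescribed by $R_{33}$; once this reduction is carried out, the two sides assemble into the claimed identity.
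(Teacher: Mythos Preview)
Your approach is sound and takes a genuinely different route from the paper's. The paper does not prove this statement from scratch; it attributes it to Zeng and then remarks that it is the specialisation $y_1=\cdots=y_a=y/a$, $y_{a+1}=\cdots=y_n=-y/b$ of the Chan--Liu result (\thmref{Luo130}). That result is in turn recovered in the paper as \thmref{cor600}, a corollary of \thmref{LUO9}, whose proof (modelled on Section~2) is an elliptic-function argument: check that the left-hand side has the same quasi-periodicity as $\vartheta_3(z|\tau)$, so the quotient is doubly periodic with at most one simple pole, hence constant; then identify the constant by matching a single Fourier coefficient. Your route bypasses the elliptic-function machinery entirely in favour of direct series manipulation and root-of-unity orthogonality, which is more elementary and self-contained. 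The trade-off is that the Liouville-type argument transfers uniformly to all of the mixed-product theorems in the paper with essentially no extra work, whereas your computation must be redone for each variant.

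One warning about the bookkeeping you flagged as the main obstacle. After the shift $m_j\mapsto m_j'+kr$, $n_l\mapsto n_l'+kr$ (which is forced by $a+b=n$), the quadratic part becomes $\sum(m_j')^2+\sum(n_l')^2+nk^2r^2$, and with $q'=e^{\pi i\tau/(kn^2)}$ one gets $(q')^{nk^2r^2}=e^{\pi i\tau kr^2/n}$, not $q^{r^2}$. Likewise the $y$-exponent comes out as $2nM'iy/(ab)$ rather than the $2kM'iy/(ab)$ dictated by the printed $R_{33}$. Both discrepancies vanish if the modulus is $\tau/(k^2n)$ and the factor in the exponent of $R_{33}$ is $2n$ instead of $2k$; this is exactly what the substitution $z\to z/(mn)$, $\tau\to\tau/(m^2n)$, $m\to k$ in \thmref{Luo130} produces. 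So the difficulty you anticipate stems from a transcription slip in the displayed statement, not from a gap in your method.
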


S. H. Chan and Z.-G. Liu \cite{ChanSH2010a} further extended Zeng's result as the following form.
\begin{thm} \label{Luo130}
Suppose $y_1,y_2,\cdots,y_n$ are $n$ complex numbers such that $y_1+y_2+ \cdots+ y_n=0$, 
\begin{equation} \label{Luo131}
\sum_{k=0}^{mn-1}\prod_{j=1}^{n} \vartheta_3\left(z+y_j+{k\pi\over mn}\left|\tau\right.\right)=
G_{m,n}(y_1,y_2,\cdots,y_n|\tau)\vartheta_3(mnz|m^2n \tau),
\end{equation}
where
\begin{equation}\label{Luo132}
G_{m,n}(y_1,y_2,\cdots,y_n|\tau) =mn \sum_{\substack{r_1,\cdots,r_n=-\infty\\r_1+\cdots+r_n=0}}^\infty
q^{r_1^2+r_2^2+\cdots+r_n^2}e^{2i(r_1y_1+r_2y_2+\cdots+r_ny_n)}.
\end{equation}
\end{thm}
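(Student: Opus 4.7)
The plan is to prove \eqref{Luo131} by the standard theta-function template: regard both sides as functions of $z$, verify that they satisfy the same two quasi-periodic transformation laws in $z$, conclude via Fourier analysis that the two sides are proportional with a factor independent of $z$, and finally identify that factor with $G_{m,n}(y_1,\ldots,y_n|\tau)$ by a direct series computation.

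Denote by $L(z)$ the left-hand side of \eqref{Luo131}. The easy periodicity $L(z+\pi/(mn))=L(z)$ follows from reindexing $k\mapsto k+1$ in the outer sum, the missing $k=mn$ term being restored by $\vartheta_3(w+\pi|\tau)=\vartheta_3(w|\tau)$ in \eqref{Theta7}. For the second transformation I apply \eqref{Theta11}, which introduces the multiplier $q^{-m^2}e^{-2miw}$ in each of the $n$ factors of the product; the accumulated exponent is
\begin{equation*}
q^{-m^2 n}\exp\!\Bigl(-2mi\bigl(nz+{\textstyle\sum_{j=1}^n}y_j+k\pi/m\bigr)\Bigr)=q^{-m^2 n}e^{-2mniz}e^{-2\pi i k},
\end{equation*}
where the hypothesis $y_1+\cdots+y_n=0$ annihilates the $y$-contribution and $e^{-2\pi i k}=1$ for integer $k$. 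Summing over $k$ gives $L(z+m\pi\tau)=q^{-m^2 n}e^{-2mniz}L(z)$, which is precisely the transformation law obeyed by $\vartheta_3(mnz|m^2 n\tau)$ under $z\mapsto z+m\pi\tau$.

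Period invariance forces an expansion $L(z)=\sum_{l\in\mathbb{Z}}c_l\,e^{2mnilz}$ with coefficients depending only on $\tau$ and $y_1,\ldots,y_n$. Substituting into the second quasi-periodicity yields the recursion $c_{l+1}=q^{m^2 n(2l+1)}c_l$, whose solution is $c_l=q^{m^2 n l^2}c_0$; therefore
\begin{equation*}
L(z)=c_0\sum_{l\in\mathbb{Z}}q^{m^2 n l^2}e^{2mnilz}=c_0\,\vartheta_3(mnz|m^2 n\tau),
\end{equation*}
and the theorem reduces to pinning down $c_0$.

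For the constant, I expand each theta function on the left by \eqref{Theta3}, interchange the finite $k$-sum with the sums over $r_1,\ldots,r_n$, and invoke the root-of-unity orthogonality
\begin{equation*}
\sum_{k=0}^{mn-1}\exp\!\Bigl(2\pi i k\,\frac{r_1+\cdots+r_n}{mn}\Bigr)=mn\cdot\mathbb{1}\!\left[r_1+\cdots+r_n\equiv0\pmod{mn}\right].
\end{equation*}
Extracting the $z$-independent Fourier mode picks out precisely the sublattice $r_1+\cdots+r_n=0$, and the surviving sum is exactly $G_{m,n}(y_1,\ldots,y_n|\tau)$ as defined in \eqref{Luo132}. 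The one genuine obstacle is the second quasi-periodicity check: one must thread the hypothesis $\sum_j y_j=0$ together with the $n$-fold product and the integrality of $k$ so that the residual phase collapses to $1$; once that is secured, the remainder of the argument is routine roots-of-unity bookkeeping.
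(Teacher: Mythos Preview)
Your argument is correct and follows essentially the same template as the paper: establish the two quasi-periodicities of the left-hand side, conclude that it is a constant multiple of $\vartheta_3(mnz\mid m^2n\tau)$, and then identify the constant by matching Fourier coefficients. The only cosmetic difference is that the paper first rescales $z\mapsto z/(mn)$, $\tau\mapsto\tau/(m^2n)$ and then invokes the Liouville-type argument that $f(z)/\vartheta_3(z\mid\tau)$ is an elliptic function with at most one simple pole in the fundamental parallelogram (hence constant), whereas you stay in the original variables and run the Fourier recursion $c_{l+1}=q^{m^2n(2l+1)}c_l$ directly; these two devices are interchangeable and neither buys anything the other does not.
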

Clearly, if $a$ and $b$ are two positive integers such that $a+b =n$, then we can take $y_1 = y_2 =\cdots= y_a = y/a$ and $y_{a+1} = y_{a+2} = \cdots = y_n =-y/b$ in \thmref{Luo130} to obtain \thmref{Luo128}.

Zhu \cite{Zhu2012a,Zhu2012b} study the alternating circular summation formula of theta functions and also correct an error of \cite{ChanSH2010a}. Cai \textit{et al.} \cite{luo2013a,luo2014a} obtain other circular summation formulas of theta functions. Z.-G. Liu \cite[p. 1978, Theorem 1.1 and Theorem 1.2]{LiuZG2012a} further obtained two theorems which unify \thmref{Luo130} and \cite[p. 117, Theorem 1.7]{Zhu2012a} of Zhu.

In the present paper, motivated by \cite{luo2013a},  \cite{ChanSH2010a} and \cite{ChanHH2006a}, we further give some extensions for Ramanujan's circular summation by applying the method of elliptic function. We also give some applications of the circular summation formulas. We now state our results as follows.

\begin{thm}\label{LUO4}
Let $n$ be even, $m$ be any positive integers, $a$ and $b$ be any non-negative integers such that $a+b=n$, and $x_{1}, \ldots,x_{a},y_{1},\ldots, y_{b}$ be any complex numbers such that their sum be $0$.
\begin{itemize}
\item When $ma$ is even, we have
\end{itemize}
\begin{align}\label{Luo40}
\sum_{k=0}^{mn-1}\prod_{j=1}^{a}\vartheta_1\left(z+x_{j}+\frac{k\pi
}{mn}\left|\tau\right.\right)\prod_{i=1}^{b}\vartheta_2\left(z+y_{i}+\frac{k\pi
}{mn}\left|\tau\right.\right)=R_{1,2}\left(m,n;\tau \right)\vartheta_3\left(mnz|{m^2n}\tau\right).
\end{align}
\begin{itemize}
\item When $ma$ is odd, we have
\end{itemize}
\begin{align}\label{Luo40a}
\sum_{k=0}^{mn-1}\prod_{j=1}^{a}\vartheta_1\left(z+x_{j}+\frac{k\pi
}{mn}\left|\tau\right.\right)\prod_{i=1}^{b}\vartheta_2\left(z+y_{i}+\frac{k\pi
}{mn}\left|\tau\right.\right)=R_{1,2}\left(m,n;\tau \right)\vartheta_4\left(mnz|{m^2n}\tau\right),
\end{align}
where
\begin{align}\label{Luo41}
R_{1,2}\left(m,n;\tau \right)
=& mni^{a}q^{-\frac{n}{4}}
\sum_{\substack{r_{1},\ldots,r_{a},s_{1},\ldots,s_{b}=-\infty
\\
2(r_{1}+\cdots+r_{a}+s_{1}+\cdots+s_{b})+n=0}}^{\infty}\left(-1\right)^{r_{1}+\cdots+r_{a}}\notag \\
& \times
q^{r_{1}^{2}+\cdots+r_{a}^{2}+s_{1}^{2}+\cdots+s_{b}^{2}}e^{2i\left(r_{1}x_{1}+\cdots+r_{a}x_{a}+s_{1}y_{1}+\cdots+s_{b}y_{b}\right)}.
\end{align}
\end{thm}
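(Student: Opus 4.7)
The plan is to apply the elliptic-function template already underlying \thmref{Luo130}: denote the left-hand side of \eqref{Luo40} (or \eqref{Luo40a}) by $f(z)$, show that $f$ has the same quasi-periodicities as the stated right-hand side, conclude agreement up to a scalar, and then identify that scalar by Fourier-coefficient extraction.

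For the horizontal period $z\mapsto z+\pi/(mn)$, the shift induces the cyclic permutation $k\mapsto k+1$ in the summation, and the wrap-around at $k=mn$ contributes one factor $-1$ per $\vartheta_1$ (from \eqref{Theta5}) and one factor $-1$ per $\vartheta_2$ (from \eqref{Theta6}). The net sign is $(-1)^{a+b}=(-1)^n=1$ because $n$ is even, so $f(z+\pi/(mn))=f(z)$. For the vertical quasi-period $z\mapsto z+\pi m\tau$, iterating \eqref{Theta9} and \eqref{Theta10} gives per-factor multipliers $(-1)^m q^{-m^2}e^{-2mi(z+x_j+k\pi/(mn))}$ on each $\vartheta_1$ and $q^{-m^2}e^{-2mi(z+y_i+k\pi/(mn))}$ on each $\vartheta_2$. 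The hypothesis $\sum x_j+\sum y_i=0$ together with $e^{-2ik\pi}=1$ kills the $x_j$, $y_i$, and $k$ dependence, leaving the uniform multiplier $(-1)^{ma}q^{-m^2n}e^{-2imnz}$. Comparing with \eqref{Theta11} and \eqref{Theta12} applied to $\vartheta_3(mnz|m^2n\tau)$ and $\vartheta_4(mnz|m^2n\tau)$, this matches the quasi-period of $\vartheta_3$ precisely when $ma$ is even and of $\vartheta_4$ precisely when $ma$ is odd; this is the origin of the dichotomy in the theorem.

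Since $f$ is entire and $\pi/(mn)$-periodic, it admits a Fourier expansion $f(z)=\sum_{r\in\mathbb{Z}}c_r e^{2rmniz}$. The vertical quasi-period then forces the recursion $c_{r+1}=\pm q^{(2r+1)m^2n}c_r$, whose unique solution $c_r=c_0(\pm 1)^r q^{m^2nr^2}$ identifies $f(z)=c_0\,\vartheta_3(mnz|m^2n\tau)$ or $c_0\,\vartheta_4(mnz|m^2n\tau)$ in the two cases, reducing the theorem to the computation of $c_0$.

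To compute $c_0$, I would substitute the series \eqref{Theta1} and \eqref{Theta2} into each factor of the product, expand, and invoke the orthogonality $\sum_{k=0}^{mn-1}e^{iAk\pi/(mn)}=mn\cdot\mathbf{1}[A\equiv 0\pmod{2mn}]$, which is legitimate because $A:=2(\sum r_j+\sum s_i)+n$ is even. The $r=0$ Fourier coefficient then collects exactly the lattice sum with constraint $2(\sum r_j+\sum s_i)+n=0$; the hypothesis $\sum x_j+\sum y_i=0$ eliminates the $+1$ pieces inside $e^{(2r_j+1)ix_j}$ and $e^{(2s_i+1)iy_i}$, and converting $\sum r_j(r_j+1)+\sum s_i(s_i+1)$ to $\sum r_j^2+\sum s_i^2$ via the constraint produces the overall factor $q^{-n/4}$. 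This yields the claimed $R_{1,2}(m,n;\tau)$ in \eqref{Luo41}. The main technical obstacle is this last bookkeeping step: every power of $i$, every alternating sign coming out of $\vartheta_1$, and every interaction between the linear constraint and the quadratic $q$-exponent must be tracked carefully in order to land on the exact form of \eqref{Luo41}.
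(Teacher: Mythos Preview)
Your argument is correct and follows the same overall template as the paper: establish the two quasi-periodicities of the left-hand side, deduce that it equals a constant times the appropriate theta function, and then extract that constant by comparing Fourier coefficients. The coefficient bookkeeping you outline (the orthogonality over $k$, the constraint $A=0$, and the conversion of the quadratic $q$-exponent via the linear constraint) is exactly what the paper does in \eqref{Luo48}--\eqref{Luo49}.

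The one genuine difference is the key lemma in the middle step. The paper first rescales $z\mapsto z/(mn)$, $\tau\mapsto\tau/(m^2n)$ so as to work with the standard periods $\pi$ and $\pi\tau$, then forms the quotient $f(z)/\vartheta_3(z|\tau)$ (resp.\ $/\vartheta_4$) and invokes the Liouville-type fact that an elliptic function with at most one simple pole in the period parallelogram is constant. You instead stay in the original variables and argue directly from the Fourier expansion: the horizontal period gives $f(z)=\sum_r c_r e^{2rmniz}$, and the vertical quasi-period forces the first-order recursion on $c_r$ whose solution is $c_r=c_0(\pm1)^r q^{m^2nr^2}$. Your route is slightly more elementary (no elliptic-function theory is needed, only uniqueness of Fourier coefficients of an entire periodic function), while the paper's route is the classical elliptic-function argument and makes the structural reason (``ratio is doubly periodic with a single simple pole'') more visible. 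Both lead to the same constant $c_0=R_{1,2}(m,n;\tau)$.
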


\begin{thm}\label{LUO5}
Let $a$ be even, $m$ and $n$ be any positive integers, $a$ and $b$ be any non-negative integers such that $a+b=n$, and $x_{1}, \ldots,x_{a},y_{1},\ldots, y_{b}$ be any complex numbers such that their sum be $0$. We have
\begin{align}\label{Luo50}
\sum_{k=0}^{mn-1}\prod_{j=1}^{a}\vartheta_1\left(z+x_{j}+\frac{k \pi
}{mn}\left|\tau\right.\right)\prod_{i=1}^{b}\vartheta_3\left(z+y_{i}+\frac{k\pi
}{mn}\left|\tau\right.\right)=R_{1,3}\left(m,n;\tau \right)\vartheta_3\left(mnz|{m^2n}\tau\right),
\end{align}
where
\begin{align}\label{Luo51}
R_{1,3}\left(m,n;\tau 
\right)=& mni^{a}q^{\frac{a}{4}}
\sum_{\substack{r_{1},\ldots,r_{a},s_{1},\ldots,s_{b}=-\infty
\\
2(r_{1}+\cdots+r_{a}+s_{1}+\cdots+s_{b})+a=0}}^{\infty}\left(-1\right)^{r_{1}+\cdots+r_{a}}\notag\\
& \times
q^{r_{1}^{2}+\cdots+r_{a}^{2}+s_{1}^{2}+\cdots+s_{b}^{2}+r_{1}+\cdots+r_{a}} e^{i\left(2r_{1}x_{1}+\cdots+2r_{a}x_{a}+2s_{1}y_{1}+\cdots+2s_{b}y_{b}+x_{1}+\cdots+x_{a}\right)}.
\end{align}
\end{thm}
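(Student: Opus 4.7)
The plan is to set $f(z)$ equal to the left-hand side of \eqref{Luo50} and show, by checking two transformation laws in $z$, that $f(z)/\vartheta_{3}(mnz|m^{2}n\tau)$ is a $z$-independent constant; that constant is then identified with $R_{1,3}(m,n;\tau)$ by computing the constant term of the Fourier expansion of $f(z)$ in $e^{2mniz}$.

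First, I would verify $f(z+\pi/(mn))=f(z)$: the substitution $z\mapsto z+\pi/(mn)$ advances the summation index $k\mapsto k+1$, and the ``missing'' $k=mn$ summand is related to the $k=0$ summand via \eqref{Theta5} and \eqref{Theta7} by the overall factor $(-1)^{a}$ coming from the $a$ factors of $\vartheta_{1}$, which is $+1$ because $a$ is even. Second, I would verify the quasi-periodicity under $z\mapsto z+m\pi\tau$: applying \eqref{Theta9} and \eqref{Theta11} with $n$ replaced by $m$ to each factor, every summand picks up the same prefactor
\begin{equation*}
(-1)^{ma}\,q^{-m^{2}n}\,e^{-2mniz}\,e^{-2mi\left(\sum_{j}x_{j}+\sum_{i}y_{i}\right)}\,e^{-2ik\pi},
\end{equation*}
in which $a+b=n$ is used in the exponents of $q$ and of $e^{-2iz}$, the hypothesis $\sum x_{j}+\sum y_{i}=0$ kills the middle exponential, $e^{-2ik\pi}=1$, and the parity of $a$ gives $(-1)^{ma}=1$. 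Thus $f(z+m\pi\tau)=q^{-m^{2}n}e^{-2mniz}f(z)$, which is exactly the law \eqref{Theta11} obeyed by $\vartheta_{3}(mnz|m^{2}n\tau)$.

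From the first transformation law, $f(z)$ admits a Fourier expansion $f(z)=\sum_{\ell\in\mathbb{Z}}c_{\ell}e^{2mn\ell iz}$, and matching coefficients in the second law forces $c_{\ell+1}=q^{m^{2}n(2\ell+1)}c_{\ell}$, so $c_{\ell}=q^{m^{2}n\ell^{2}}c_{0}$ and therefore $f(z)=c_{0}\,\vartheta_{3}(mnz|m^{2}n\tau)$. To compute $c_{0}$, I would substitute the $q$-series \eqref{Theta1} for each $\vartheta_{1}$ and \eqref{Theta3} for each $\vartheta_{3}$, expand the product over $j=1,\ldots,a$ and $i=1,\ldots,b$ with summation variables $r_{j}$ and $s_{i}$, and evaluate the geometric sum $\sum_{k=0}^{mn-1}\exp(iN\pi k/(mn))$ with $N=2(r_{1}+\cdots+r_{a}+s_{1}+\cdots+s_{b})+a$. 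Since $a$ is even, $N$ is always even, so the inner sum equals $mn$ precisely when $N\equiv 0\pmod{2mn}$ and vanishes otherwise; the constant term $c_{0}$ picks out $N=0$. After writing $e^{i\sum(2r_{j}+1)x_{j}}=e^{2i\sum r_{j}x_{j}}\,e^{i\sum x_{j}}$ and using $(-i)^{a}=i^{a}$ (again because $a$ is even), the right-hand side of \eqref{Luo51} is reproduced verbatim.

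The main obstacle is the bookkeeping in the quasi-periodicity step: one must keep track of the signs, the powers of $q$, and the separate exponential factors in $z$, $x_{j}$, $y_{i}$, and $k$ arising from the product of $n=a+b$ theta functions, and verify that each of the three hypotheses (that $a$ is even, that $a+b=n$, and that $\sum x_{j}+\sum y_{i}=0$) is used exactly where it is needed for the extraneous factors to cancel. Once this is correctly arranged, the Fourier coefficient computation is a routine index shift of the kind already used in Theorems~\ref{Luo128} and~\ref{Luo130}.
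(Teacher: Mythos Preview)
Your proposal is correct and follows the same overall strategy as the paper: verify the two transformation laws for the left-hand side, conclude it is a constant multiple of the appropriate $\vartheta_3$, and identify the constant by a Fourier coefficient computation. The paper only writes out the proof of Theorem~\ref{LUO4} and declares the remaining cases analogous, so your argument is exactly the intended one for Theorem~\ref{LUO5}.

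There is one minor technical difference worth noting. The paper first rescales $z\mapsto z/(mn)$, $\tau\mapsto\tau/(m^{2}n)$ so that the resulting function has periods $\pi$ and $\pi\tau$, forms the quotient by $\vartheta_{3}(z|\tau)$, and invokes the standard fact that an elliptic function with at most a single simple pole in a fundamental parallelogram is constant. You instead keep the original variables, read off directly that $f$ has period $\pi/(mn)$ and quasi-period $m\pi\tau$, and derive the Fourier recurrence $c_{\ell+1}=q^{m^{2}n(2\ell+1)}c_{\ell}$ to conclude $f(z)=c_{0}\vartheta_{3}(mnz|m^{2}n\tau)$. Your route is slightly more elementary in that it avoids any reference to poles or Liouville-type arguments; the paper's route is a bit more conceptual but requires the (easy, unstated) check that the quotient has at most one simple pole per period cell. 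Both implementations of the constant computation then coincide: expand using \eqref{Theta1} and \eqref{Theta3}, sum the geometric series in $k$ (which, because $a$ is even, forces $N\equiv 0\pmod{2mn}$), and pick out $N=0$ to recover \eqref{Luo51}.
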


\begin{thm}\label{LUO6}
Let $a$ be even, $m$ and $n$ be any positive integers, $a$ and $b$ be any non-negative integers such that $a+b=n$, and $x_{1}, \ldots,x_{a},y_{1},\ldots, y_{b}$ be any complex numbers such that their sum be $0$.
\begin{itemize}
\item When $mn$ is even, we have
\end{itemize}
\begin{align}\label{Luo60}
\sum_{k=0}^{mn-1}\prod_{j=1}^{a}\vartheta_1\left(z+x_{j}+\frac{k\pi
}{mn}\left|\tau\right.\right)\prod_{i=1}^{b}\vartheta_4\left(z+y_{i}+\frac{k\pi
}{mn}\left|\tau\right.\right)=R_{1,4}\left(m,n;\tau \right)\vartheta_3\left(mnz|{m^2n}\tau\right).
\end{align}
\begin{itemize}
\item When $mn$ is odd, we have
\end{itemize}
\begin{align}\label{Luo60a}
\sum_{k=0}^{mn-1}\prod_{j=1}^{a}\vartheta_1\left(z+x_{j}+\frac{k\pi
}{mn}\left|\tau\right.\right)\prod_{i=1}^{b}\vartheta_4\left(z+y_{i}+\frac{k\pi
}{mn}\left|\tau\right.\right)=R_{1,4}\left(m,n;\tau \right)\vartheta_4\left(mnz|{m^2n}\tau\right),
\end{align}
where
\begin{align}\label{Luo61}
R_{1,4}\left(m,n;\tau 
\right)=& mnq^{\frac{a}{4}}
\sum_{\substack{r_{1},\ldots,r_{a},s_{1},\ldots,s_{b}=-\infty
\\
2(r_{1}+\cdots+r_{a}+s_{1}+\cdots+s_{b})+a=0}}^{\infty} q^{r_{1}^{2}+\cdots+r_{a}^{2}+s_{1}^{2}+\cdots+s_{b}^{2}+r_{1}+\cdots+r_{a}} \notag\\
& \times
e^{i\left(2r_{1}x_{1}+\cdots+2r_{a}x_{a}+2s_{1}y_{1}+\cdots+2s_{b}y_{b}+x_{1}+\cdots+x_{a}\right)}.
\end{align}
\end{thm}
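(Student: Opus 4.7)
The plan is to use the elliptic-function method, in direct analogy with the proofs of \thmref{LUO4} and \thmref{LUO5}. Let $f(z)$ denote the common left-hand side of \eqref{Luo60} and \eqref{Luo60a}; as a finite sum of products of theta functions it is entire in $z$. I first aim to establish the transformation laws
\[
f\!\left(z+\tfrac{\pi}{mn}\right)=f(z), \qquad f(z+m\pi\tau)=(-1)^{mn}q^{-m^{2}n}e^{-2mniz}f(z).
\]

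For the first law, reindex $k\mapsto k+1$ in the outer sum. The only nontrivial boundary contribution, at $k=mn$, has each argument shifted by $\pi$, so by \eqref{Theta5} and \eqref{Theta8} it equals $(-1)^{a}$ times the original $k=0$ summand; since $a$ is even this factor is $+1$, giving invariance. For the second law, a naive shift by $\pi\tau$ leaves behind a $k$-dependent phase $e^{-2ik\pi/m}$ that does not trivialize when $m>1$; instead I shift by $m\pi\tau$ and apply \eqref{Theta9} and \eqref{Theta12}. The $k$-dependent factor becomes $e^{-2i(a+b)k\pi/n}=e^{-2ik\pi}=1$, the $x_j,y_i$-dependent phase cancels by $\sum_j x_j+\sum_i y_i=0$, and what remains is $(-1)^{m(a+b)}q^{-m^{2}(a+b)}e^{-2m(a+b)iz}=(-1)^{mn}q^{-m^{2}n}e^{-2mniz}$.

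The period $\pi/(mn)$ yields a Fourier expansion $f(z)=\sum_{\ell\in\mathbb{Z}}c_\ell e^{2mn\ell iz}$; substituting into the quasi-periodicity and matching coefficients gives the recursion $c_{\ell+1}=(-1)^{mn}q^{m^{2}n(2\ell+1)}c_\ell$, hence $c_\ell=(-1)^{mn\ell}q^{m^{2}n\ell^{2}}c_0$. Summing and comparing with \eqref{Theta3}, \eqref{Theta4}, this gives $f(z)=c_0\vartheta_3(mnz|m^{2}n\tau)$ when $mn$ is even (so $(-1)^{mn\ell}=1$) and $f(z)=c_0\vartheta_4(mnz|m^{2}n\tau)$ when $mn$ is odd (so $(-1)^{mn\ell}=(-1)^\ell$), matching the stated right-hand sides.

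It remains to identify $c_0$ as the coefficient of $e^{0\cdot iz}$ in $f(z)$. Inserting the series \eqref{Theta1} into each $\vartheta_1$ and \eqref{Theta4} into each $\vartheta_4$, multiplying out, and summing on $k$ forces the constraint $2(r_1+\cdots+r_a+s_1+\cdots+s_b)+a=0$ and produces a factor $mn$. Under this constraint $(-1)^{r_1+\cdots+r_a+s_1+\cdots+s_b}=(-1)^{-a/2}$ is an integer power (since $a$ is even), and combined with the prefactor $(-i)^{a}$ from the $a$ copies of $\vartheta_1$ it yields $(-i)^{a}(-1)^{a/2}=(-1)^{a/2}(-1)^{a/2}=1$, so both complex units disappear, leaving $R_{1,4}(m,n;\tau)$ in the form \eqref{Luo61}. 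The principal technical point is the correct second translation period: the natural shift $\pi\tau$ fails to close on $f$ when $m>1$, and shifting by $m\pi\tau$ is exactly what kills the residual $k$-dependent phase.
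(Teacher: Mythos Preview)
Your proof is correct, but it follows a genuinely different route from the paper's. The paper (as in its proof of \thmref{LUO4}, which it declares to be the template for all the remaining theorems) first rescales $z\mapsto z/(mn)$, $\tau\mapsto\tau/(m^{2}n)$ so that the left-hand side becomes a function with exact period $\pi$ and quasi-period $\pi\tau$; it then forms the quotient by $\vartheta_3(z|\tau)$ or $\vartheta_4(z|\tau)$ and invokes the Liouville-type fact that a doubly periodic function with at most one simple pole in the fundamental parallelogram is constant, before undoing the rescaling and reading off the constant by series comparison. You instead work in the original variables, establish the periods $\pi/(mn)$ and $m\pi\tau$ directly, and identify $f$ with a theta series via the Fourier-coefficient recursion $c_{\ell+1}=(-1)^{mn}q^{m^{2}n(2\ell+1)}c_\ell$, bypassing both the change of variables and the elliptic-function argument. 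Your approach is a bit more elementary (it needs no pole analysis) and makes the parity dichotomy $mn$ even/odd appear transparently as the sign $(-1)^{mn\ell}$ in the solved recursion; the paper's approach has the advantage of fitting into the classical elliptic-function framework and of making the role of the simple pole of $\vartheta_3$ (or $\vartheta_4$) explicit. Your observation that a shift by $m\pi\tau$ (rather than $\pi\tau$) is what annihilates the residual $k$-phase is exactly the content, in your coordinates, of the paper's change of variables.
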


\begin{thm}\label{LUO7}
Let $a$ be even, $m$ and $n$ be any positive integers, $a$ and $b$ be any non-negative integers such that $a+b=n$, and $x_{1}, \ldots,x_{a},y_{1},\ldots, y_{b}$ be any complex numbers such that their sum be $0$. We have
\begin{align}\label{Luo70}
\sum_{k=0}^{mn-1}\prod_{j=1}^{a}\vartheta_2\left(z+x_{j}+\frac{k\pi
}{mn}\left|\tau\right.\right)\prod_{i=1}^{b}\vartheta_3\left(z+y_{i}+\frac{k\pi
}{mn}\left|\tau\right.\right)=R_{2,3}\left(m,n;\tau \right)\vartheta_3\left(mnz|{m^2n}\tau\right),
\end{align}
where
\begin{align}\label{Luo71}
R_{2,3}\left(m,n;\tau 
\right)=& mnq^{\frac{a}{4}}
\sum_{\substack{r_{1},\ldots,r_{a},s_{1},\ldots,s_{b}=-\infty
\\
2(r_{1}+\cdots+r_{a}+s_{1}+\cdots+s_{b})+a=0}}^{\infty}q^{r_{1}^{2}+\cdots+r_{a}^{2}+s_{1}^{2}+\cdots+s_{b}^{2}+r_{1}+\cdots+r_{a}}\notag\\
& \times
e^{i\left(2r_{1}x_{1}+\cdots+2r_{a}x_{a}+2s_{1}y_{1}+\cdots+2s_{b}y_{b}+x_{1}+\cdots+x_{a}\right)}.
\end{align}
\end{thm}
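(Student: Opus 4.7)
The plan is to follow the elliptic-function template used for \thmref{LUO4}--\thmref{LUO6}. Write $F(z)$ for the left-hand side of \eqref{Luo70}. I will show that $F$ satisfies the two quasi-periodicities characterizing $\vartheta_3(mnz|m^2n\tau)$, conclude by the one-dimensionality of the associated space of entire functions that $F(z)=C\,\vartheta_3(mnz|m^2n\tau)$ for some constant $C=C(\tau,x_j,y_i)$, and then identify $C$ by matching Fourier coefficients. The periodicity $F(z+\pi/(mn))=F(z)$ amounts to the re-indexing $k\mapsto k-1$: the only discrepancy is the boundary term at $k=mn$, which by \eqref{Theta6}--\eqref{Theta7} carries an overall factor $(-1)^a\cdot 1^b=+1$ since $a$ is even, so it collapses onto the missing $k=0$ term.

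For the $\tau$-direction quasi-period I apply \eqref{Theta10} and \eqref{Theta11}. Each of the $a$ factors $\vartheta_2$ and $b$ factors $\vartheta_3$ contributes a multiplier $q^{-m^2}e^{-2im(z+\ast+k\pi/(mn))}$ under $z\mapsto z+\pi m\tau$. Multiplying through and using $a+b=n$, the balance condition $x_1+\cdots+x_a+y_1+\cdots+y_b=0$, and $e^{-2ik\pi}=1$ for integer $k$, the total multiplier collapses to the $k$-independent factor $q^{-m^2n}e^{-2imnz}$, which is exactly the quasi-period of $\vartheta_3(mnz|m^2n\tau)$. To identify $C$, I expand both sides as Fourier series in $w=e^{2iz}$. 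Substituting \eqref{Theta2} and \eqref{Theta3} into each factor on the left produces a multiple sum over $r_1,\ldots,r_a,s_1,\ldots,s_b$ in which the power of $w$ is $N:=r_1+\cdots+r_a+s_1+\cdots+s_b+a/2$ and the $k$-dependent phase is $e^{2iNk\pi/(mn)}$; summing over $0\le k\le mn-1$ kills every term with $N\notin mn\mathbb{Z}$ and multiplies the rest by $mn$. Equating the $w^0$ coefficient (equivalently $N=0$, equivalently $2(r_1+\cdots+r_a+s_1+\cdots+s_b)+a=0$) with that of $\vartheta_3(mnz|m^2n\tau)=\sum_\ell q^{\ell^2m^2n}w^{mn\ell}$ at $\ell=0$, and rewriting $\sum(2r_j+1)x_j=2\sum r_jx_j+\sum x_j$, reproduces exactly \eqref{Luo71}.

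The technical heart of the argument is the quasi-periodicity computation: one must verify carefully that all $k$-dependent exponentials collapse, and it is here that both hypotheses --- the evenness of $a$ (for the first periodicity) and the balance condition $\sum x_j+\sum y_i=0$ (for the second) --- play their decisive roles. The Fourier-coefficient identification then reduces to essentially the same bookkeeping that produced $R_{1,3}$ and $R_{1,4}$ in the proofs of \thmref{LUO5} and \thmref{LUO6}, the absence here of the $i^a$ prefactor and of the alternating sign $(-1)^{r_1+\cdots+r_a}$ reflecting that $\vartheta_2$ carries no $-i$ in front and no $(-1)^n$ inside its $q$-series, unlike $\vartheta_1$.
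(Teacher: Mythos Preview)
Your proposal is correct and follows essentially the same approach as the paper. The only cosmetic difference is that the paper first performs the substitution $z\mapsto z/(mn)$, $\tau\mapsto \tau/(m^2n)$ so that the comparison is with the standard $\vartheta_3(z|\tau)$ and the ``one-dimensionality'' is phrased as the elliptic-function statement that $f(z)/\vartheta_3(z|\tau)$, having at most a single simple pole in the period parallelogram, must be constant; you work directly with $\vartheta_3(mnz|m^2n\tau)$ and invoke the equivalent fact. The quasi-periodicity checks and the Fourier-coefficient identification of the constant are carried out in the same way.
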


\begin{thm}\label{LUO8}
Let $a$ be even, $m$ and $n$ be any positive integers, $a$ and $b$ be any non-negative integers such that $a+b=n$, and $x_{1}, \ldots,x_{a},y_{1},\ldots, y_{b}$ be any complex numbers such that their sum be $0$.
\begin{itemize}
\item When $mb$ is even, we have
\end{itemize}
\begin{align}\label{Luo80}
\sum_{k=0}^{mn-1}\prod_{j=1}^{a}\vartheta_2\left(z+x_{j}+\frac{k\pi
}{mn}\left|\tau\right.\right)\prod_{i=1}^{b}\vartheta_4\left(z+y_{i}+\frac{k\pi
}{mn}\left|\tau\right.\right)=R_{2,4}\left(m,n;\tau \right)\vartheta_3\left(mnz|{m^2n}\tau\right).
\end{align}
\begin{itemize}
\item When $mb$ is odd, we have
\end{itemize}
\begin{align}\label{Luo80a}
\sum_{k=0}^{mn-1}\prod_{j=1}^{a}\vartheta_2\left(z+x_{j}+\frac{k\pi
}{mn}\left|\tau \right.\right)\prod_{i=1}^{b}\vartheta_4\left(z+y_{i}+\frac{k\pi
}{mn}\left|\tau\right.\right)=R_{2,4}\left(m,n;\tau \right)\vartheta_4\left(mnz|{m^2n}\tau\right),
\end{align}
where
\begin{align}\label{Luo81}
R_{2,4} \left(m,n;\tau \right)=& mnq^{\frac{a}{4}}
\sum_{\substack{r_{1},\ldots,r_{a},s_{1},\ldots,s_{b}=-\infty
\\
2(r_{1}+\cdots+r_{a}+s_{1}+\cdots+s_{b})+a=0}}^{\infty}\left(-1\right)^{s_{1}+\cdots+s_{b}}\notag\\
& \times
q^{r_{1}^{2}+\cdots+r_{a}^{2}+s_{1}^{2}+\cdots+s_{b}^{2}+r_{1}+\cdots+r_{a}}\notag\\
& \times
e^{i\left(2r_{1}x_{1}+\cdots+2r_{a}x_{a}+2s_{1}y_{1}+\cdots+2s_{b}y_{b}+x_{1}+\cdots+x_{a}\right)}.
\end{align}
\end{thm}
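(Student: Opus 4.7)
The proof follows the same elliptic-function template that handles Theorems \ref{LUO4}--\ref{LUO7}. Let $f(z)$ denote the left-hand side of \eqref{Luo80}/\eqref{Luo80a}. The plan is to verify that $f(z)$ satisfies the same period and quasi-period transformations as the theta function appearing on the right-hand side, then invoke a standard one-dimensionality argument to conclude that $f(z)$ is a scalar multiple of that theta, and finally identify the multiplier by reading off a single Fourier coefficient.

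First I would check that $f$ has period $\pi/(mn)$ in $z$. The substitution $z\mapsto z+\pi/(mn)$ re-indexes the sum over $k$, the only discrepancy being that the boundary term $k=mn$ differs from $k=0$ by $(-1)^{a}(1)^{b}=1$, on account of \eqref{Theta5}--\eqref{Theta8} and the assumption that $a$ is even. Hence $f(z+\pi/(mn))=f(z)$. Next I would compute $f(z+m\pi\tau)$ using the iterated quasi-period identities \eqref{Theta10} and \eqref{Theta12} (with $n$ replaced by $m$). The $\vartheta_{4}$ factors contribute a sign $(-1)^{mb}$, the $q$-factors collapse to $q^{-m^{2}n}$, and the exponential contributions telescope via the hypothesis $\sum_{j}x_{j}+\sum_{i}y_{i}=0$ together with $e^{-2\pi ik}=1$, yielding
\begin{equation*}
f(z+m\pi\tau)=(-1)^{mb}\,q^{-m^{2}n}\,e^{-2mnzi}\,f(z).
\end{equation*}
This is precisely the quasi-period transformation satisfied by $\vartheta_{3}(mnz|m^{2}n\tau)$ when $mb$ is even, and by $\vartheta_{4}(mnz|m^{2}n\tau)$ when $mb$ is odd.

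Combining the two transformations and expanding $f(z)=\sum_{\ell}c_{\ell}e^{2mn\ell iz}$, the quasi-period condition collapses into the recursion $c_{\ell+1}=(-1)^{mb}\,q^{m^{2}n(2\ell+1)}\,c_{\ell}$, whose solution $c_{\ell}=(-1)^{mb\ell}\,q^{m^{2}n\ell^{2}}\,c_{0}$ matches the Fourier expansion of either $\vartheta_{3}(mnz|m^{2}n\tau)$ or $\vartheta_{4}(mnz|m^{2}n\tau)$ depending on the parity of $mb$. Thus $f(z)$ equals $c_{0}$ times the correct theta, and it remains only to compute the constant $c_{0}$.

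For this last step I would expand each factor using \eqref{Theta2} and \eqref{Theta4}, multiply out, and apply the discrete orthogonality
\begin{equation*}
\sum_{k=0}^{mn-1}e^{iMk\pi/(mn)}=\begin{cases} mn, & 2mn\mid M,\\ 0, & \text{otherwise,}\end{cases}
\end{equation*}
with $M=2(r_{1}+\cdots+r_{a}+s_{1}+\cdots+s_{b})+a$. The $\ell=0$ coefficient singles out the sublattice $M=0$, and after collecting the prefactor $q^{a/4}$ (from the $\vartheta_{2}$'s), the sign $(-1)^{s_{1}+\cdots+s_{b}}$ (from the $\vartheta_{4}$'s), and the exponential $e^{i(2\sum r_{j}x_{j}+\sum x_{j}+2\sum s_{i}y_{i})}$ (the $\sum x_{j}$ arising from the odd exponents $2r_{j}+1$ in $\vartheta_{2}$), one recovers exactly the expression $R_{2,4}(m,n;\tau)$ of \eqref{Luo81}. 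The main obstacle is careful sign bookkeeping: the factor $(-1)^{mb}$ in the second step is what selects between $\vartheta_{3}$ and $\vartheta_{4}$ on the right, while in the Fourier computation the $(-1)^{\sum s_{i}}$ and the linear term $e^{i\sum x_{j}}$ must be tracked separately from the quadratic/linear $q$-exponent $\sum r_{j}^{2}+\sum s_{i}^{2}+\sum r_{j}$.
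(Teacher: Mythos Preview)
Your argument is correct and follows essentially the same route as the paper's proof of Theorem~\ref{LUO4} (which the authors say carries over verbatim to Theorems~\ref{LUO5}--\ref{LUO9}): establish the two transformation laws, deduce proportionality to the relevant theta, and read off the constant by Fourier expansion. The only cosmetic differences are that you work directly in the original variables rather than first rescaling $z\mapsto z/(mn)$, $\tau\mapsto\tau/(m^{2}n)$, and you phrase the one-dimensionality step as a Fourier-coefficient recursion rather than taking the quotient by the theta and invoking the constancy of an order-one elliptic function.
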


\begin{thm}\label{LUO9}
Let $m$ and $n$ be any positive integers, $a$ and $b$ be any non-negative integers such that $a+b=n$, and $x_{1}, \ldots,x_{a},y_{1},\ldots, y_{b}$ be any complex numbers such that their sum be $0$.
\begin{itemize}
\item When $mb$ is even, we have
\end{itemize}
\begin{align}\label{Luo90}
\sum_{k=0}^{mn-1}\prod_{j=1}^{a}\vartheta_3\left(z+x_{j}+\frac{k\pi
}{mn}\left|\tau\right.\right)\prod_{i=1}^{b}\vartheta_4\left(z+y_{i}+\frac{k\pi
}{mn}\left|\tau\right.\right)=R_{3,4}\left(m,n;\tau \right)\vartheta_3\left(mnz|{m^2n}\tau\right).
\end{align}
\begin{itemize}
\item When $mb$ is odd, we have
\end{itemize}
\begin{align}\label{Luo90a}
\sum_{k=0}^{mn-1}\prod_{j=1}^{a}\vartheta_3\left(z+x_{j}+\frac{k\pi
}{mn}\left|\tau\right.\right)\prod_{i=1}^{b}\vartheta_4\left(z+y_{i}+\frac{k\pi
}{mn}\left|\tau\right.\right)=R_{3,4}\left(\tau \right)\vartheta_4\left(mnz|{m^2n}\tau\right),
\end{align}
where
\begin{align}\label{Luo91}
R_{3,4} \left(m,n;\tau \right)=& mn
\sum_{\substack{r_{1},\ldots,r_{a},s_{1},\ldots,s_{b}=-\infty
\\
r_{1}+\cdots+r_{a}+s_{1}+\cdots+s_{b}=0}}^{\infty}(-1)^{s_{1}+\cdots+s_{b}}q^{r_{1}^{2}+\cdots+r_{a}^{2}+s_{1}^{2}+\cdots+s_{b}^{2}}\notag\\
& \times
e^{2i\left(r_{1}x_{1}+\cdots+r_{a}x_{a}+s_{1}y_{1}+\cdots+s_{b}y_{b}\right)}.
\end{align}
\end{thm}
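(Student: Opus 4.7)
Let $f(z)$ denote the left-hand side of \eqref{Luo90} (equivalently \eqref{Luo90a}). The strategy is to identify $f(z)$ with a scalar multiple of $\vartheta_3\bigl(mnz\bigl|m^2n\tau\bigr)$ or $\vartheta_4\bigl(mnz\bigl|m^2n\tau\bigr)$ by matching its periodicity and quasi-periodicity in $z$, and then to extract the scalar as the constant Fourier coefficient of $f$. The two cases distinguish themselves purely by the parity of $mb$.

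\emph{Periodicity.} Since $\vartheta_3(z+\pi|\tau)=\vartheta_3(z|\tau)$ and $\vartheta_4(z+\pi|\tau)=\vartheta_4(z|\tau)$, the substitution $z\mapsto z+\pi/(mn)$ in $f(z)$ induces the cyclic reindexing $k\mapsto k+1$ in the sum, so $f(z+\pi/(mn))=f(z)$. Hence $f$ admits a Fourier expansion $f(z)=\sum_{\ell\in\mathbb Z}c_\ell e^{2imn\ell z}$.

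\emph{Quasi-periodicity.} Using \eqref{Theta11} and \eqref{Theta12}, the shift $z\mapsto z+m\pi\tau$ multiplies each $\vartheta_3$ factor by $q^{-m^2}e^{-2im(z+x_j+k\pi/(mn))}$ and each $\vartheta_4$ factor by $(-1)^m q^{-m^2}e^{-2im(z+y_i+k\pi/(mn))}$. Multiplying these $n=a+b$ contributions together, invoking the normalization $\sum_j x_j+\sum_i y_i=0$, and simplifying via $e^{-2imn\cdot k\pi/(mn)}=e^{-2ik\pi}=1$, one finds the total multiplier $(-1)^{mb}q^{-m^2n}e^{-2imnz}$, independent of $k$, so
\[
f(z+m\pi\tau)=(-1)^{mb}\,q^{-m^2n}\,e^{-2imnz}\,f(z).
\]
This coincides exactly with the transformation of $\vartheta_3(mnz|m^2n\tau)$ under $z\mapsto z+m\pi\tau$ when $mb$ is even, and with that of $\vartheta_4(mnz|m^2n\tau)$ when $mb$ is odd.

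\emph{Identification and evaluation of the constant.} Substituting the Fourier expansion into the quasi-periodicity gives the recursion $c_{\ell+1}=(-1)^{mb}q^{m^2n(2\ell+1)}c_\ell$, whence $c_\ell=(-1)^{mb\,\ell}q^{m^2n\ell^2}c_0$. Summing yields
\[
f(z)=c_0\sum_{\ell\in\mathbb Z}(-1)^{mb\,\ell}q^{m^2n\ell^2}e^{2imn\ell z},
\]
which equals $c_0\vartheta_3(mnz|m^2n\tau)$ for $mb$ even and $c_0\vartheta_4(mnz|m^2n\tau)$ for $mb$ odd. To obtain $c_0$, one substitutes the series \eqref{Theta3}--\eqref{Theta4} into the product defining $f(z)$, expands, and performs the $k$-summation via
\[
\sum_{k=0}^{mn-1}e^{2ik\pi R/(mn)}=\begin{cases} mn, & R\equiv0\pmod{mn},\\ 0, & \text{otherwise,}\end{cases}
\]
where $R=r_1+\cdots+r_a+s_1+\cdots+s_b$. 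The $\ell=0$ term (i.e.\ the restriction $R=0$) reproduces the expression $R_{3,4}(m,n;\tau)$ displayed in \eqref{Luo91}, finishing the proof.

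The main obstacle is the bookkeeping in the quasi-periodicity step: one must carefully account for the sign $(-1)^{mb}$ from the $\vartheta_4$ factors, aggregate the exponent $q^{-m^2n}$, and verify the collapse of the $k$-dependent phase via $\sum x_j+\sum y_i=0$. Once the correct multiplier is in hand, the Fourier-coefficient recursion and the evaluation of $c_0$ are routine.
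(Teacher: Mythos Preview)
Your argument is correct and complete: the periodicity under $z\mapsto z+\pi/(mn)$, the quasi-periodicity under $z\mapsto z+m\pi\tau$ with multiplier $(-1)^{mb}q^{-m^2n}e^{-2imnz}$, the Fourier-coefficient recursion, and the extraction of $c_0$ via the geometric $k$-sum are all valid.

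The route, however, differs from the paper's. The paper (arguing by analogy with its proof of \thmref{LUO4}) first makes the change of variables $z\mapsto z/(mn)$, $\tau\mapsto\tau/(m^2n)$ so that the transformed sum has periods $\pi$ and $\pi\tau$, then forms the quotient $f(z)/\vartheta_3(z|\tau)$ (or $f(z)/\vartheta_4(z|\tau)$ when $mb$ is odd) and invokes Liouville's theorem: the quotient is elliptic with at most one simple pole in the fundamental parallelogram, hence constant. Your approach instead works directly with the original variables and bypasses the elliptic-function machinery entirely, replacing it with an explicit Fourier-series recursion. This is more elementary and self-contained, at the cost of a small amount of additional bookkeeping in solving the recursion $c_{\ell+1}=(-1)^{mb}q^{m^2n(2\ell+1)}c_\ell$. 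The paper's method, on the other hand, makes the identification with $\vartheta_3$ or $\vartheta_4$ conceptually immediate once one recognizes the quotient as a pole-free elliptic function.
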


\section{\bf Proofs of the main results}
In this section, we only prove \thmref{LUO4}, the proofs of other theorems are similar.

Let $f(z)$ be the left-hand of \eqref{Luo40} with $z \longmapsto \frac{z}{mn}, \tau \longmapsto \frac{\tau }{m^{2}n}$, we have
\begin{align}\label{Luo42}
f\left(z\right)=\sum_{k=0}^{mn-1}\prod_{j=1}^{a}\vartheta_1\left(\frac{z}{mn}+x_{j}+\frac{k\pi
}{mn}\left|\frac{\tau}{m^{2}n}\right.\right)\prod_{i=1}^{b}\vartheta_2\left(\frac{z}{mn}+y_{i}+\frac{k\pi
}{mn}\left|\frac{\tau}{m^{2}n}\right.\right).
\end{align}
By \eqref{Theta5} and \eqref{Theta6} and noting that $a+b=n$, we easily obtain
\begin{align}\label{Luo43}
f\left(z+\pi\right)=&\sum_{k=0}^{mn-1}\prod_{j=1}^{a}\vartheta_1\left(\frac{z+\pi}{mn}+x_{j}+\frac{k\pi
}{mn}\left|\frac{\tau}{m^{2}n}\right.\right)\prod_{i=1}^{b}\vartheta_2\left(\frac{z+\pi}{mn}+y_{i}+\frac{k\pi
}{mn}\left|\frac{\tau}{m^{2}n}\right.\right)\notag \\
=&\sum_{k=1}^{mn-1}\prod_{j=1}^{a}\vartheta_1\left(\frac{z}{mn}+x_{j}+\frac{k\pi
}{mn}\left|\frac{\tau}{m^{2}n}\right.\right)\prod_{i=1}^{b}\vartheta_2\left(\frac{z}{mn}+y_{i}+\frac{k\pi
}{mn}\left|\frac{\tau}{m^{2}n}\right.\right)\notag
\\&+(-1)^{n}\prod_{j=1}^{a}\vartheta_1\left(\frac{z}{mn}+x_{j}\left|\frac{\tau}{m^{2}n}\right.\right)\prod_{i=1}^{b}\vartheta_2\left(\frac{z}{mn}+y_{i}\left|\frac{\tau}{m^{2}n}\right.\right).
\end{align}
When $n$ is \textit{even}, comparing \eqref{Luo42} and \eqref{Luo43}, we get
\begin{align}\label{Luo44}
f(z)=f(z+\pi).
\end{align}
By \eqref{Theta9} and \eqref{Theta10}, noting that
$x_{1}+\cdots+x_{a}+y_{1}+\cdots+y_{b}=0$ and $a+b=n$, we have
\begin{align}\label{Luo45}
f\left(z+\pi\tau\right)=&\sum_{k=0}^{mn-1}\prod_{j=1}^{a}\vartheta_1\left(\frac{z+\pi\tau}{mn}+x_{j}+\frac{k\pi
}{mn}\left|\frac{\tau}{m^{2}n}\right.\right)\prod_{i=1}^{b}\vartheta_2\left(\frac{z+\pi\tau}{mn}+y_{i}+\frac{k\pi
}{mn}\left|\frac{\tau}{m^{2}n}\right.\right)\notag \\
            =&(-1)^{ma}q^{-1}e^{-2iz}\sum_{k=0}^{mn-1}\prod_{j=1}^{a}\vartheta_1\left(\frac{z}{mn}+x_{j}+\frac{k\pi
}{mn}\left|\frac{\tau}{m^{2}n}\right.\right)\prod_{i=1}^{b}\vartheta_2\left(\frac{z}{mn}+y_{i}+\frac{k\pi
}{mn}\left|\frac{\tau}{m^{2}n}\right.\right)\notag\\
            =&(-1)^{ma}q^{-1}e^{-2iz}f\left(z\right).
\end{align}
$\bullet $ When $ma$ is \textit{even} in \eqref{Luo45}, we have
\begin{align}\label{Luo45a}
f(z+\pi \tau)=q^{-1}e^{-2iz}f(z).
\end{align}
We construct the function
$\dfrac{f(z)}{\vartheta_3(z|\tau)}$, by \eqref{Theta7}, \eqref{Luo44} and \eqref{Luo45a},
we find that the function $\dfrac{f(z)}{\vartheta_3(z|\tau)}$ is an
elliptic function with the double periods $\pi$ and $\pi\tau$, and only
have a simple pole at $z=\dfrac{\pi}{2}+\dfrac{\pi\tau}{2}$ in the
period parallelogram. Hence the function
$\dfrac{f(z)}{\vartheta_3(z|\tau)}$ is a constant, say this constant is
${C}_{1,2}^{(1)}(m,n;\tau)$, i.e.,
\begin{equation*}
\frac{f(z)}{\vartheta_3(z|\tau)}={C}_{1,2}^{(1)}(m,n;\tau ),
\end{equation*}
we have
\begin{equation}\label{Luo46}
f(z)={C}_{1,2}^{(1)}(m,n;\tau )\vartheta_3(z|\tau).
\end{equation}
By \eqref{Luo42} and \eqref{Luo46}, we obtain
\begin{align}\label{Luo47}
\sum_{k=0}^{mn-1}\prod_{j=1}^{a}\vartheta_1\left(\frac{z}{mn}+x_{j}+\frac{k\pi
}{mn}\left|\frac{\tau}{m^{2}n}\right.\right)\prod_{i=1}^{b}\vartheta_2\left(\frac{z}{mn}+y_{i}+\frac{k\pi
}{mn}\left|\frac{\tau}{m^{2}n}\right.\right)={C}_{1,2}^{(1)}\left(m,n;\tau \right)\vartheta_3\left(z|\tau\right).
\end{align}
Letting $$z \longmapsto mnz \quad \textup{and} \quad \tau \longmapsto {m^{2}n} \tau$$ in \eqref{Luo47}, and then setting $$R_{1,2}^{(1)}\left(m,n;\tau \right)=C_{1,2}^{(1)}\left(m,n;{m^{2}n \tau }\right),$$ we arrive at \eqref{Luo40}.

Below we calculate the constant $R_{1,2}^{(1)}\left(m,n;\tau \right)$.

Setting $$z \longmapsto z+x_{j}+\frac{k \pi}{mn}$$ in
\eqref{Theta1}, by some simple calculation and noting that $n$ is even, we obtain
\begin{equation}\label{Luo198}
\begin{split}
\prod_{j=1}^{a}\vartheta_1\left(z+x_{j}+\frac{k \pi }{mn}\left|\tau \right.\right)=& i^{a}q^{\frac{a}{4}}e^{i(x_{1}+x_{2}+\cdots+x_{a})}\sum_{r_{1},\ldots,r_{a}=-\infty}^{\infty}(-1)^{r_{1}+\cdots+r_{a}}q^{r_{1}^{2}+\cdots+r_{a}^{2}+r_{1}+\cdots+r_{a}}\\
& \times e^{(2r_{1}+\cdots+2r_{a}+a)iz}
e^{2i (r_{1}x_{1}+\cdots+r_{a}x_{a})}e^{\frac{k\pi{i}}{mn}(2r_{1}+\cdots+2r_{a}+a)}.
\end{split}
\end{equation}

Setting $$z \longmapsto z+y_{i}+\frac{k \pi}{mn}$$ in
\eqref{Theta2}, we obtain
\begin{equation}\label{Luo198a}
\begin{split}
\prod_{i=1}^{b}\vartheta_2\left(z+y_{i}+\frac{k \pi }{mn}\left|\tau \right.\right)=& q^{\frac{b}{4}}e^{i(y_{1}+y_{2}+\cdots+y_{b})}\sum_{s_{1},\ldots,s_{b}=-\infty}^{\infty}q^{s_{1}^{2}+\cdots+s_{b}^{2}+s_{1}+\cdots+s_{b}}\\
& \times e^{(2s_{1}+\cdots+2s_{b}+b)iz}
e^{2i (s_{1}y_{1}+\cdots+s_{b}y_{b})}e^{\frac{k\pi{i}}{mn}(2s_{1}+\cdots+2s_{b}+b)}.
\end{split}
\end{equation}

Setting $$z \longmapsto mnz \quad \textup{and} \quad \tau \longmapsto {m^{2}n \tau}$$ in \eqref{Theta3}, we get
\begin{equation}\label{Luo199}
\vartheta_3\left(mnz|m^2n \tau \right)=\sum_{r=-\infty}^{\infty}q^{m^2nr^{2}}e^{2mnriz}.
\end{equation}
Substituting \eqref{Luo198}, \eqref{Luo198a} and \eqref{Luo199} into \eqref{Luo40}, noting that
$x_{1}+\cdots+x_{a}+y_{1}+\cdots+y_{b}=0$ and $a+b=n$, we find
\begin{align}\label{Luo48}
&i^{a}q^{\frac{n}{4}}\sum_{k=0}^{mn-1}\sum_{r_{1},\ldots,r_{a},s_{1},\ldots,s_{b}=-\infty}^{\infty}\left(-1\right)^{r_{1}+\cdots+r_{a}}q^{r_{1}^{2}+\cdots+r_{a}^{2}+s_{1}^{2}+\cdots+s_{b}^{2}+r_{1}+\cdots+r_{a}+s_{1}+\cdots+s_{b}}\notag \\
& \times
e^{iz\left(2r_{1}+\cdots+2r_{a}+2s_{1}+\cdots+2s_{b}+n\right)}e^{2i\left(r_{1}x_{1}+\cdots+r_{a}x_{a}+s_{1}y_{1}\cdots+s_{b}y_{b}\right)}e^{\frac{k\pi{i}
}{mn}\left(2r_{1}+\cdots+2r_{a}+2s_{1}+\cdots+2s_{b}+n\right)}\notag \\
& ={R}_{1,2}^{(1)}\left(m,n;\tau \right)\sum_{r=-\infty}^{\infty}q^{{m^{2}n} r^{2}}e^{2mnriz}.
\end{align}
Equating the constant of both sides of \eqref{Luo48}, noting that $r_{1}+\cdots+r_{a}+s_{1}+\cdots+s_{b}=-\frac{n}{2}$, we have

\begin{align}\label{Luo49}
{R}_{1,2}^{(1)}\left(m,n; \tau 
\right)=& mni^{a}q^{-\frac{n}{4}}\sum_{\substack{r_{1},\ldots,r_{a},s_{1},\ldots,s_{b}=-\infty
\\
2(r_{1}+\cdots+r_{a}+s_{1}+\cdots+s_{b})+n=0}}^{\infty}
(-1)^{r_{1}+\cdots+r_{a}}q^{r_{1}^{2}+\cdots+r_{a}^{2}+s_{1}^{2}+\cdots+s_{b}^{2}}\notag \\
& \times
 e^{2i \left
(r_{1}x_{1}+\cdots+r_{a}x_{a}+s_{1}y_{1}\cdots+s_{b}y_{b}\right)}.
\end{align}

$\bullet $ When $ma$ is \textit{odd} in \eqref{Luo45}, we have
\begin{align}\label{Luo45b}
f(z+\pi \tau)=-q^{-1}e^{-2iz}f(z).
\end{align}
We construct the function $\frac{f(z)}{\vartheta_4(z|\tau)}$, by \eqref{Theta8}, \eqref{Luo45a} and \eqref{Luo45b}, we find that the function $\frac{f(z)}{\vartheta_4(z|\tau)}$ is an elliptic function with double periods $\pi$ and $\pi\tau$, and has \textit{only} a simple pole at $z=\frac{\pi \tau}{2}$ in the period parallelogram. Hence the function $\frac{f(z)}{\vartheta_4(z|\tau)}$ is a constant, say $C_{1,2}^{(2)}(m,n;\tau)$,
 we have
\begin{equation*}
f(z)=C_{1,2}^{(2)}(m,n;\tau)\vartheta_4(z|\tau),
\end{equation*}
or, equivalently
\begin{align}\label{Luo615}
\sum_{k=0}^{mn-1}\prod_{j=1}^{a}\vartheta_1\left(\frac{z}{mn}+x_{j}+\frac{k\pi
}{mn}\left|\frac{\tau}{m^{2}n}\right.\right)\prod_{i=1}^{b}\vartheta_2\left(\frac{z}{mn}+y_{i}+\frac{k\pi
}{mn}\left|\frac{\tau}{m^{2}n}\right.\right)=C_{1,2}^{(2)}\left(m,n;\tau \right)\vartheta_{4}\left(z|\tau \right).
\end{align}
Letting $$z \longmapsto mnz \quad \textup{and} \quad \tau \longmapsto {m^{2}n} \tau$$ in \eqref{Luo615}, and then setting $$R_{1,2}^{(2)}\left(m,n;\tau \right)=C_{1,2}^{(2)}\left({m^{2}n \tau }\right),$$ we arrive at \eqref{Luo40a}.

Similarly, by \eqref{Theta1}, \eqref{Theta2} and \eqref{Theta4}, from \eqref{Luo40a}, we obtain
\begin{align}\label{Luo48a}
& i^{a}q^{\frac{n}{4}}\sum_{k=0}^{mn-1}\sum_{r_{1},\ldots,r_{a},s_{1},\ldots,s_{b}=-\infty}^{\infty}\left(-1\right)^{r_{1}+\cdots+r_{a}}q^{r_{1}^{2}+\cdots+r_{a}^{2}+s_{1}^{2}+\cdots+s_{b}^{2}+r_{1}+\cdots+r_{a}+s_{1}+\cdots+s_{b}}\notag \\
& \times
e^{iz\left(2r_{1}+\cdots+2r_{a}+2s_{1}+\cdots+2s_{b}+n\right)}e^{2i\left(r_{1}x_{1}+\cdots+r_{a}x_{a}+s_{1}y_{1}\cdots+s_{b}y_{b}\right)}e^{\frac{k\pi{i}
}{mn}\left(2r_{1}+\cdots+2r_{a}+2s_{1}+\cdots+2s_{b}+n\right)}\notag \\
& ={R}_{1,2}^{(2)}\left(m,n;\tau 
\right)\sum_{r=-\infty}^{\infty}(-1)^rq^{m^{2}nr^{2}}e^{2mnriz}.
\end{align}
Equating the constant of both sides of \eqref{Luo48a}, we have

\begin{align}\label{Luo49a}
{R}_{1,2}^{(2)}\left(m,n;\tau 
\right)=& mni^{a}q^{-\frac{n}{4}}\sum_{\substack{r_{1},\ldots,r_{a},s_{1},\ldots,s_{b}=-\infty
\\
2(r_{1}+\cdots+r_{a}+s_{1}+\cdots+s_{b})+n=0}}^{\infty}
\left(-1\right)^{r_{1}+\cdots+r_{a}}q^{r_{1}^{2}+\cdots+r_{a}^{2}+s_{1}^{2}+\cdots+s_{b}^{2}}\notag \\
& \times
 e^{2i\left
(r_{1}x_{1}+\cdots+r_{a}x_{a}+s_{1}y_{1}\cdots+s_{b}y_{b}\right)}.
\end{align}
Clearly, we have
$${R}_{1,2}\left(m,n;\tau 
\right):={R}_{1,2}^{(1)}\left(m,n;\tau 
\right)={R}_{1,2}^{(2)}\left(m,n;\tau 
\right).$$
The proof is complete.

\section{\bf Circular summation formulas for the products of the single Jacobi's theta functions}
In this section, we deduce the circular summation formulas for the products of the single Jacobi's theta functions from the above section.
\begin{thm}\label{cor149}
Suppose that $n$ is even, $m$ is any positive integers; $y_{1}, y_{2}, \ldots, y_{n}$ are any complex numbers such that $y_1+y_2+ \cdots+ y_n=0$, we have
\begin{align}\label{cor11}
\sum_{k=0}^{mn-1}\prod_{j=1}^{n}\vartheta_1\left(z+y_{j}+\frac{k \pi
}{mn}\left|\tau \right.\right)=R_{1}\left(m,n;\tau \right)\vartheta_3\left(mnz|m^2n \tau \right),
\end{align}
where
\begin{align}\label{cor12}
R_{1}\left(m,n;\tau \right)=mnq^{-\frac{n}{4}}
\sum_{\substack{r_{1}, \ldots, r_{n}=-\infty \\
r_{1}+\cdots+r_{n}=\frac{n}{2}}}^{\infty}q^{r^{2}_{1}+\cdots+r^{2}_{n}}e^{-2i\left(r_{1}y_{1}+\cdots+r_{n}y_{n}\right)}.
\end{align}
\end{thm}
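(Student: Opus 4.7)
The plan is to deduce Theorem \ref{cor149} directly from Theorem \ref{LUO4} by specialization, rather than repeating the elliptic-function argument of Section 2. Set $a=n$, $b=0$, and rename $x_{j}=y_{j}$. The $\vartheta_{2}$-product in the left-hand side of \eqref{Luo40} collapses to the empty product $1$, the side condition $x_{1}+\cdots+x_{a}+y_{1}+\cdots+y_{b}=0$ becomes $y_{1}+\cdots+y_{n}=0$, and the parity hypothesis ``$ma$ is even'' reads ``$mn$ is even'', which is automatic because $n$ is assumed even. Thus the first branch \eqref{Luo40} of Theorem \ref{LUO4} applies and yields
\[
\sum_{k=0}^{mn-1}\prod_{j=1}^{n}\vartheta_{1}\left(z+y_{j}+\frac{k\pi}{mn}\left|\tau\right.\right)=R_{1,2}(m,n;\tau)\vartheta_{3}(mnz|m^{2}n\tau),
\]
which already has the shape of \eqref{cor11}.

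It remains to simplify $R_{1,2}(m,n;\tau)$ in \eqref{Luo41} under this specialization into the form \eqref{cor12}. With $b=0$ the $s$-variables drop out and the summation constraint becomes $r_{1}+\cdots+r_{n}=-n/2$. On that index set the factor $(-1)^{r_{1}+\cdots+r_{n}}$ equals the constant $(-1)^{-n/2}=(-1)^{n/2}$; since $n$ is even one also has $i^{a}=i^{n}=(-1)^{n/2}$, so the prefactor $i^{a}(-1)^{r_{1}+\cdots+r_{n}}$ collapses to $(-1)^{n}=1$. After this cancellation I would perform the bijective change of variables $r_{j}\mapsto -r_{j}$, which preserves $r_{1}^{2}+\cdots+r_{n}^{2}$, flips the constraint to $r_{1}+\cdots+r_{n}=n/2$, and turns $e^{2i(r_{1}y_{1}+\cdots+r_{n}y_{n})}$ into $e^{-2i(r_{1}y_{1}+\cdots+r_{n}y_{n})}$. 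This recovers exactly the expression \eqref{cor12} for $R_{1}(m,n;\tau)$.

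The only conceptual point I would have to verify is that the elliptic-function proof of Theorem \ref{LUO4} remains valid in the degenerate case $b=0$; inspection of Section 2 shows that the $\pi$-periodicity uses only that $n$ is even, while the $\pi\tau$ quasi-periodicity multiplier is $(-1)^{ma}=(-1)^{mn}=1$, so the empty $\vartheta_{2}$-product causes no difficulty. I expect the only mildly delicate step to be the parity bookkeeping that produces the cancellation $i^{n}(-1)^{-n/2}=1$; every other manipulation is a routine reindexing.
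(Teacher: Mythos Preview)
Your proof is correct and follows exactly the paper's approach: the paper also specializes $a=n$, $b=0$ in \eqref{Luo40} of \thmref{LUO4} (with the empty-product convention) and then notes $r_{1}+\cdots+r_{n}=\frac{n}{2}$ with $n$ even. You simply make explicit the sign cancellation $i^{n}(-1)^{-n/2}=1$ and the reindexing $r_{j}\mapsto -r_{j}$ that the paper leaves implicit.
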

\begin{proof}
Define that the empty product $\prod_{j=1}^{b}=1, \textup{when $b<1$}$. Setting $a=n$ and $b=0$ in \eqref{Luo40} of \thmref{LUO4}, noting that $r_{1}+\cdots+r_{n}=\frac{n}{2}$ and $n$ is even, we deduce \thmref{cor11}.
\end{proof}
\begin{thm}\label{cor155}
Suppose that $n$ is even, $m$ is any positive integers; $y_{1}, y_{2}, \ldots, y_{n}$ are any complex numbers such that $y_1+y_2+ \cdots+ y_n=0$, we have
\begin{align}\label{cor156}
\sum_{k=0}^{mn-1}\prod_{j=1}^{n}\vartheta_2\left(z+y_{j}+\frac{k \pi
}{mn}\left|\tau \right.\right)=R_{2}\left(m,n;\tau \right)\vartheta_3\left(mnz|m^2n \tau \right),
\end{align}
where
\begin{align}\label{cor157}
R_{2}\left(m,n;\tau \right)=mnq^{-\frac{n}{4}}
\sum_{\substack{r_{1}, \ldots, r_{n}=-\infty \\
r_{1}+\cdots+r_{n}=\frac{n}{2}}}^{\infty}q^{r^{2}_{1}+\cdots+r^{2}_{n}}e^{-2i\left(r_{1}y_{1}+\cdots+r_{n}y_{n}\right)}.
\end{align}
\end{thm}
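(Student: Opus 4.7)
The statement is set up to follow as an immediate specialization of \thmref{LUO4}, and the plan is to take $a=0$ and $b=n$ in the identity \eqref{Luo40}. With the empty-product convention $\prod_{j=1}^{0}=1$, this collapses the $\vartheta_1$-product and leaves behind exactly the left-hand side of \eqref{cor156}. Because $a=0$, the parity hypothesis ``$ma$ is even'' that \eqref{Luo40} requires is automatic, so the identity applies and the $\vartheta_3(mnz|m^2n\tau)$ on the right-hand side is the one asserted in \eqref{cor156}. (This is precisely parallel to the proof of \thmref{cor149}, which specializes \eqref{Luo40} at $a=n$, $b=0$ instead.)

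What remains is to verify that $R_{1,2}(m,n;\tau)$ from \eqref{Luo41}, specialized to $a=0$, $b=n$, coincides with the $R_2(m,n;\tau)$ of \eqref{cor157}. With $a=0$ the prefactor $i^{a}$ and the sign $(-1)^{r_1+\cdots+r_a}$ both drop out, no $r_j$'s or $x_j$'s remain, and the constraint becomes $s_1+\cdots+s_n=-n/2$. I then apply the bijection $s_j\mapsto -r_j$ on $\mathbb{Z}^n$: this flips the constraint to $r_1+\cdots+r_n=n/2$, preserves $q^{s_1^2+\cdots+s_n^2}=q^{r_1^2+\cdots+r_n^2}$, and converts $e^{2i(s_1 y_1+\cdots+s_n y_n)}$ into $e^{-2i(r_1 y_1+\cdots+r_n y_n)}$, which is precisely the expression in \eqref{cor157}.

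There is no genuine obstacle once \thmref{LUO4} is granted; the only things to be careful about are (i) invoking the empty-product convention so that the $a=0$ specialization is legitimate, exactly as in the proof of \thmref{cor149}, and (ii) keeping track of signs through the change of summation variable (in particular, because $n$ is even, the factor $q^{-n/4}$ is unambiguous and no half-integer parity issue creeps in). As a sanity check one could alternatively redo the elliptic-function argument used for \thmref{LUO4} from scratch applied to $\vartheta_2^{n}$, using the quasi-periodicities $\vartheta_2(z+\pi|\tau)=-\vartheta_2(z|\tau)$ and $\vartheta_2(z+\pi\tau|\tau)=q^{-1}e^{-2iz}\vartheta_2(z|\tau)$ together with the evenness of $n$ to secure both the $\pi$-periodicity and the $q^{-1}e^{-2iz}$-quasiperiodicity of the left-hand side; but the specialization route above is considerably shorter.
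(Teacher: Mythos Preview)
Your proposal is correct and follows essentially the same approach as the paper: specialize \thmref{LUO4} at $a=0$, $b=n$ with the empty-product convention, then simplify $R_{1,2}$ to $R_2$. Your write-up is in fact more careful than the paper's terse proof, since you explicitly carry out the substitution $s_j\mapsto -r_j$ that converts the constraint $s_1+\cdots+s_n=-n/2$ and the factor $e^{2i(s_1y_1+\cdots+s_ny_n)}$ into the form \eqref{cor157}.
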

\begin{proof}
Define that the empty product $\prod_{j=1}^{a}=1, \textup{when $a<1$}$. Setting $a=0$ and $b=n$ in \eqref{Luo40} of \thmref{LUO4}, noting that $s_{1}+\cdots+s_{n}=\frac{n}{2}$ and $n$ is even, we deduce \thmref{cor155}.
\end{proof}
\begin{thm}\label{cor600}
Suppose that $m,n$ are any positive integers; $y_{1}, y_{2}, \ldots, y_{n}$ are any complex numbers such that $y_1+y_2+ \cdots+ y_n=0$, we have
\begin{align}\label{cor601}
\sum_{k=0}^{mn-1}\prod_{j=1}^{n}\vartheta_3\left(z+y_{j}+\frac{k \pi 
}{mn}\left|\tau \right.\right)=R_{3}\left(m,n;\tau \right)\vartheta_3\left(mnz|m^2n \tau \right),
\end{align}
where
\begin{align}\label{cor602}
& R_{3}\left(m,n;\tau \right)=mn
 \sum_{\substack{r_{1}+\cdots+r_{n}=0\\ r_{1}, \ldots, r_{n}=-\infty}}^{\infty}q^{r^{2}_{1}+\cdots+r^{2}_{n}}e^{2i\left(r_{1}y_{1}+\cdots+r_{n}y_{n}\right)}.
\end{align}
\end{thm}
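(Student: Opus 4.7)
The plan is to derive \thmref{cor600} as a direct corollary of one of the mixed-product theorems in Section 2, mirroring the derivations of \thmref{cor149} and \thmref{cor155} from \thmref{LUO4}. I would specialize \thmref{LUO5} by setting $a=0$ and $b=n$. Since $a=0$ is even, the parity hypothesis of \thmref{LUO5} is automatically satisfied, and no additional constraints on $m$ or $n$ appear, matching the hypothesis-free statement of \thmref{cor600}. Under the empty product convention $\prod_{j=1}^{0}\vartheta_1(\cdots) = 1$ (already used in the preceding proofs), the left-hand side of \eqref{Luo50} collapses to $\sum_{k=0}^{mn-1}\prod_{i=1}^{n}\vartheta_3(z+y_i+k\pi/(mn)|\tau)$, and the right-hand side retains the desired factor $\vartheta_3(mnz|m^2n\tau)$.

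It remains to check that $R_{1,3}(m,n;\tau)$ in \eqref{Luo51} reduces to $R_3(m,n;\tau)$ in \eqref{cor602} when $a=0$. The prefactor $i^{a}q^{a/4}$ becomes $1$, the alternating sign $(-1)^{r_{1}+\cdots+r_{a}}$ disappears, and the $r_{1}+\cdots+r_{a}$ and $x_{1}+\cdots+x_{a}$ contributions vanish from the exponents of $q$ and $e$. The summation constraint $2(r_{1}+\cdots+r_{a}+s_{1}+\cdots+s_{b})+a=0$ collapses to $s_{1}+\cdots+s_{n}=0$, so relabeling $s_i \mapsto r_i$ and $b \mapsto n$ yields precisely \eqref{cor602}.

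No genuine obstacle is expected here, since this is just an empty-product specialization of a theorem already in hand; the only thing to verify is that the elliptic-function argument of Section 2 remains valid in the degenerate case $a=0$, which is clear on inspection. As consistency checks, the same identity can alternatively be obtained by setting $a=0$ in \thmref{LUO7} or $b=0$ in \thmref{LUO9}; each specialization eliminates the other theta family and produces the same $R_3(m,n;\tau)$. If a self-contained proof were preferred, one would run the elliptic-function argument directly on
\begin{equation*}
f(z)=\sum_{k=0}^{mn-1}\prod_{j=1}^{n}\vartheta_3\left(\frac{z}{mn}+y_{j}+\frac{k\pi}{mn}\left|\frac{\tau}{m^2n}\right.\right),
\end{equation*}
using that $f(z+\pi)=f(z)$ is immediate from \eqref{Theta7}, while \eqref{Theta11} combined with $y_1+\cdots+y_n=0$ gives $f(z+\pi\tau)=q^{-1}e^{-2iz}f(z)$. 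The ratio $f(z)/\vartheta_3(z|\tau)$ is then an entire elliptic function of periods $\pi,\pi\tau$ (the would-be simple pole at $z=\pi/2+\pi\tau/2$ is cancelled by the matching quasi-periodicity), hence a constant, which one evaluates by substituting the series \eqref{Theta3} and equating constant Fourier coefficients in $e^{2iz}$, exactly as in the computation leading to \eqref{Luo49}.
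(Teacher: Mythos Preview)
Your proposal is correct and follows essentially the same route as the paper: both obtain \thmref{cor600} by specializing one of the mixed-product theorems via the empty-product convention, the only cosmetic difference being that the paper sets $a=n$, $b=0$ in \thmref{LUO9} (one of the alternatives you already note), whereas you set $a=0$, $b=n$ in \thmref{LUO5}. The reduction of the coefficient to $R_3(m,n;\tau)$ is checked correctly in either case.
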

\begin{proof}
Define that the empty product $\prod_{j=1}^{b}=1, \textup{when $b<1$}$. Setting $a=n$ and $b=0$ in \eqref{Luo90} of \thmref{LUO9}, we deduce \thmref{cor600}.
\end{proof}
\begin{thm}\label{cor400}
Suppose that $mn$ is even; $y_{1}, y_{2}, \ldots, y_{n}$ are any complex numbers such that $y_1+y_2+ \cdots+ y_n=0$, we have
\begin{align}\label{cor401}
\sum_{k=0}^{mn-1}\prod_{j=1}^{n}\vartheta_4\left(z+y_{j}+\frac{k \pi
}{mn}\left|\tau \right.\right)=R_{4}\left(m,n;\tau \right)\vartheta_3\left(mnz|m^2n \tau \right),
\end{align}
where
\begin{align}\label{cor402}
R_{4}\left(m,n;\tau \right)=mn
 \sum_{\substack{
r_{1}+\cdots+r_{n}=0\\ r_{1}, \ldots, r_{n}=-\infty}}^{\infty}q^{r^{2}_{1}+\cdots+r^{2}_{n}}e^{2i\left(r_{1}y_{1}+\cdots+r_{n}y_{n}\right)}.
\end{align}
\end{thm}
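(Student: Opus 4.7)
The plan is to obtain Theorem \ref{cor400} as a direct specialization of Theorem \ref{LUO9}. Since the product on the left-hand side of \eqref{cor401} consists only of $\vartheta_4$-factors and no $\vartheta_3$-factors, I would set $a=0$ and $b=n$ in \eqref{Luo90}, adopting the empty-product convention $\prod_{j=1}^{0} = 1$. The parity hypotheses of Theorem \ref{LUO9} require $a$ to be even, which is automatic when $a=0$, and require $mb$ to be even in order to obtain $\vartheta_3(mnz\mid m^2 n\tau)$ on the right; with $b=n$ this condition reduces to $mn$ even, which is exactly the hypothesis of Theorem \ref{cor400}. The sum condition $x_1+\cdots+x_a+y_1+\cdots+y_b=0$ of Theorem \ref{LUO9} collapses to $y_1+\cdots+y_n=0$, matching the assumption on the $y_j$.

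It remains only to check that $R_{3,4}(m,n;\tau)$ specializes to $R_4(m,n;\tau)$. Inspecting \eqref{Luo91}, when $a=0$ the $r$-variables drop out: the constraint $r_1+\cdots+r_a+s_1+\cdots+s_b=0$ becomes $s_1+\cdots+s_n=0$, and on this locus the signature factor satisfies
\[
(-1)^{s_1+\cdots+s_b}=(-1)^{s_1+\cdots+s_n}=(-1)^0=1.
\]
After relabelling $s_j\mapsto r_j$ (the summation indices are dummy), the remaining expression is precisely \eqref{cor402}, so $R_{3,4}(m,n;\tau)=R_4(m,n;\tau)$ under this specialization.

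There is essentially no obstacle here: all of the substantive analytic content, namely the construction of the quasi-doubly-periodic ratio $f(z)/\vartheta_3(z|\tau)$ and the extraction of the constant term of the Fourier expansion, is already packaged in Theorem \ref{LUO9}. The only point to flag is that the hypothesis ``$mn$ even'' in Theorem \ref{cor400} cannot be dropped, since under the opposite parity Theorem \ref{LUO9} produces $\vartheta_4$ rather than $\vartheta_3$ on the right; this would correspond to a companion identity (paralleling the $\vartheta_4$-outputs of Theorems \ref{LUO4}, \ref{LUO6}, and \ref{LUO8}) which is not the statement being proved.
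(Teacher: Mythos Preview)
Your proposal is correct and matches the paper's own proof essentially verbatim: the paper likewise defines the empty product $\prod_{j=1}^{a}=1$ for $a<1$ and sets $a=0$, $b=n$ in \eqref{Luo90} of Theorem~\ref{LUO9} to deduce Theorem~\ref{cor400}. One minor remark: Theorem~\ref{LUO9} does not actually impose the hypothesis that $a$ be even (unlike Theorems~\ref{LUO5}--\ref{LUO8}), so your aside about this condition is unnecessary, though of course harmless since $a=0$.
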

\begin{proof}
Define that the empty product $\prod_{j=1}^{a}=1, \textup{when $a<1$}$. Setting $a=0$ and $b=n$ in \eqref{Luo90} of \thmref{LUO9}, we deduce \thmref{cor400}.
\end{proof}
\begin{rem}
\thmref{cor600} is just the main result of Chan and Liu (see \cite[p. 1191, Theorem 4]{ChanSH2010a}). \thmref{cor149}, \thmref{cor155} and \thmref{cor400} are some analogues of the main result (\thmref{Luo130}) of Chan and Liu.
\end{rem}

If taking $m=1$ and $y_1=y_2= \cdots= y_n=0$ in \thmref{cor149}, \thmref{cor155}, \thmref{cor600} and \thmref{cor400}, we further deduce the following interesting results.

\begin{cor}\label{cor200}
For $n$ is even, we have
\begin{align}\label{cor201}
\sum_{k=0}^{n-1}\vartheta_1^{n}\left(z+\frac{k \pi
}{n}\left|\tau \right.\right)=R_{1}\left(n;\tau \right)\vartheta_3\left(nz|n \tau \right),
\end{align}
where
\begin{align}\label{cor202}
R_{1}\left(n;\tau \right)=nq^{-\frac{n}{4}}
\sum_{\substack{r_{1}, \ldots, r_{n}=-\infty \\
r_{1}+\cdots+r_{n}=\frac{n}{2}}}^{\infty}q^{r^{2}_{1}+\cdots+r^{2}_{n}}.
\end{align}
\end{cor}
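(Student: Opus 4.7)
The plan is to obtain Corollary \ref{cor200} as a direct specialization of Theorem \ref{cor149}, exactly as the parenthetical remark preceding the statement suggests. Since Theorem \ref{cor149} is already established in the paper, no elliptic function argument needs to be redone here; the entire task is to verify that the substitution is legitimate and that both sides collapse to the claimed expressions.

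First I would choose the parameters $m = 1$ and $y_1 = y_2 = \cdots = y_n = 0$ in \eqref{cor11}. The standing hypothesis $y_1 + y_2 + \cdots + y_n = 0$ of Theorem \ref{cor149} is then trivially satisfied, and the hypothesis that $n$ is even carries over verbatim. With these choices, $mn = n$, each factor in the product becomes $\vartheta_1\bigl(z + k\pi/n \bigm| \tau\bigr)$, and the product over $j$ collapses into an $n$-th power, so the left-hand side of \eqref{cor11} becomes precisely $\sum_{k=0}^{n-1} \vartheta_1^n\bigl(z + k\pi/n \bigm| \tau\bigr)$.

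Next I would read off the right-hand side. The theta factor becomes $\vartheta_3(nz \mid n\tau)$, matching \eqref{cor201}. For the constant, I would substitute the same specialization into \eqref{cor12}: every exponential $e^{-2i(r_1 y_1 + \cdots + r_n y_n)}$ is identically $1$, the prefactor $mn$ reduces to $n$, and the summation constraint $r_1 + \cdots + r_n = n/2$ and the weight $q^{r_1^2 + \cdots + r_n^2}$ are untouched. Hence $R_1(1, n; \tau)$ coincides with $R_1(n; \tau)$ as defined in \eqref{cor202}, completing the identification with \eqref{cor201}.

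Because this reduction is pure substitution, there is no genuine obstacle; the only point that requires care is bookkeeping, namely confirming that the evenness of $n$ (needed to apply Theorem \ref{LUO4} through Theorem \ref{cor149}) is preserved and that the empty-product convention for the $\vartheta_2$ factors, already invoked in the proof of Theorem \ref{cor149}, is exactly what makes the $a = n$, $b = 0$ case valid. Once these are noted, the corollary follows immediately from \eqref{cor11} and \eqref{cor12}.
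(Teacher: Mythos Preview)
Your proposal is correct and follows exactly the paper's own derivation: the corollary is obtained from Theorem~\ref{cor149} by the substitution $m=1$ and $y_1=\cdots=y_n=0$, which is precisely what the paper indicates in the sentence preceding Corollary~\ref{cor200}. Your bookkeeping remarks about the evenness of $n$ and the empty-product convention are accurate and nothing further is needed.
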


\begin{cor}\label{cor203}
For $n$ is even, we have
\begin{align}\label{cor204}
\sum_{k=0}^{n-1}\vartheta_2^{n}\left(z+\frac{k \pi
}{n}\left|\tau \right.\right)=R_{2}\left(n;\tau \right)\vartheta_3\left(nz|n \tau \right),
\end{align}
where
\begin{align}\label{cor205}
R_{2}\left(n;\tau \right)=nq^{-\frac{n}{4}}
\sum_{\substack{r_{1}, \ldots, r_{n}=-\infty \\
r_{1}+\cdots+r_{n}=\frac{n}{2}}}^{\infty}q^{r^{2}_{1}+\cdots+r^{2}_{n}}.
\end{align}
\end{cor}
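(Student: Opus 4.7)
The plan is to obtain Corollary~\ref{cor203} as a direct specialization of Theorem~\ref{cor155}, which is already available in the excerpt. I would take the parameters $m=1$ and $y_{1}=y_{2}=\cdots=y_{n}=0$, observing that the hypothesis $y_{1}+y_{2}+\cdots+y_{n}=0$ is trivially satisfied and that $n$ is already assumed even in Theorem~\ref{cor155}. Under this choice, each factor on the left-hand side of \eqref{cor156} collapses to $\vartheta_{2}\!\left(z+\tfrac{k\pi}{n}\bigl|\tau\right)$, so the product becomes $\vartheta_{2}^{n}\!\left(z+\tfrac{k\pi}{n}\bigl|\tau\right)$, recovering the left-hand side of \eqref{cor204}.

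On the right-hand side, the factor $\vartheta_{3}(mnz|m^{2}n\tau)$ reduces to $\vartheta_{3}(nz|n\tau)$ when $m=1$. For the constant, setting every $y_{j}=0$ in \eqref{cor157} kills the exponential factor $e^{-2i(r_{1}y_{1}+\cdots+r_{n}y_{n})}=1$, leaving precisely
\begin{equation*}
R_{2}(1,n;\tau)=nq^{-n/4}\sum_{\substack{r_{1},\ldots,r_{n}=-\infty\\ r_{1}+\cdots+r_{n}=n/2}}^{\infty}q^{r_{1}^{2}+\cdots+r_{n}^{2}},
\end{equation*}
which one then identifies with $R_{2}(n;\tau)$ as defined in \eqref{cor205}. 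After this identification, \eqref{cor156} becomes exactly \eqref{cor204}, completing the proof.

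There is no genuine obstacle here: the whole argument is a routine substitution into an already-established theorem. The only care needed is bookkeeping, namely to note (i) that the evenness of $n$ is the lone parity assumption required so that Theorem~\ref{cor155} applies, (ii) that the constraint $y_{1}+\cdots+y_{n}=0$ holds vacuously when all $y_{j}$ vanish, and (iii) that the labeling switch from the two-argument constant $R_{2}(m,n;\tau)$ to the single-index $R_{2}(n;\tau)$ is merely notational. I would present the derivation as a one-paragraph proof mirroring the style of Theorems~\ref{cor149}--\ref{cor400}, invoking the empty-product convention $\prod_{j=1}^{a}=1$ for $a<1$ only if needed (it is not needed here, since $b=n\geq 1$).
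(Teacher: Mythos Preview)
Your proposal is correct and matches the paper's own approach: the corollary is obtained by setting $m=1$ and $y_{1}=\cdots=y_{n}=0$ in Theorem~\ref{cor155}, exactly as you outline. The paper states this specialization in one line preceding the block of corollaries, so your slightly more detailed bookkeeping is, if anything, more explicit than the original.
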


\begin{cor}\label{cor206}
For $n$ is any positive integers, we have
\begin{align}\label{cor207}
\sum_{k=0}^{n-1}\vartheta_3^{n}\left(z+\frac{k \pi 
}{n}\left|\tau \right.\right)=R_{3}\left(n;\tau \right)\vartheta_3\left(nz|n \tau \right),
\end{align}
where
\begin{align}\label{cor208}
& R_{3}\left(n;\tau \right)=n
 \sum_{\substack{r_{1}+\cdots+r_{n}=0\\ r_{1}, \ldots, r_{n}=-\infty}}^{\infty}q^{r^{2}_{1}+\cdots+r^{2}_{n}}.
\end{align}
\end{cor}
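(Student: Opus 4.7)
The plan is to obtain Corollary~\ref{cor206} as a direct specialization of Theorem~\ref{cor600}, exactly as suggested by the paragraph preceding the statement. I would begin by checking that the hypotheses of Theorem~\ref{cor600} remain valid under the substitution $m=1$ and $y_1=y_2=\cdots=y_n=0$: $m=1$ and $n$ are positive integers, and the side condition $y_1+y_2+\cdots+y_n=0$ is satisfied trivially.

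With these choices, the LHS of \eqref{cor601} reads $\sum_{k=0}^{n-1}\prod_{j=1}^{n}\vartheta_3\bigl(z+\frac{k\pi}{n}|\tau\bigr)$, which collapses to $\sum_{k=0}^{n-1}\vartheta_3^{n}\bigl(z+\frac{k\pi}{n}|\tau\bigr)$, matching the LHS of \eqref{cor207}. The factor $\vartheta_3(mnz|m^2n\tau)$ on the RHS becomes $\vartheta_3(nz|n\tau)$, again matching \eqref{cor207}. In the constant $R_3(m,n;\tau)$ given by \eqref{cor602}, the exponential factor $e^{2i(r_1y_1+\cdots+r_ny_n)}$ reduces to $1$, and the prefactor $mn$ becomes $n$; hence $R_3(1,n;\tau)$ is exactly the $R_3(n;\tau)$ defined in \eqref{cor208}. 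Setting $R_3(n;\tau):=R_3(1,n;\tau)$ completes the deduction.

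Because the argument is a pure substitution into an already-established identity, there is no real technical obstacle. The only point worth verifying carefully is the bookkeeping between the two $R_3$ symbols (the four-argument one of Theorem~\ref{cor600} versus the two-argument one in the corollary), to ensure that no factor of $m$ or of $e^{2i(\cdots)}$ has been dropped or misread. Once that identification is made explicit, the corollary follows immediately.
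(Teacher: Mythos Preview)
Your proposal is correct and follows exactly the paper's own approach: the paragraph preceding the corollaries states that they are obtained by taking $m=1$ and $y_1=y_2=\cdots=y_n=0$ in Theorems~\ref{cor149}--\ref{cor400}, and your substitution into Theorem~\ref{cor600} reproduces this specialization verbatim.
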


\begin{cor}\label{cor209}
For $n$ is even, we have
\begin{align}\label{cor210}
\sum_{k=0}^{n-1}\vartheta_4^{n}\left(z+\frac{k \pi
}{n}\left|\tau \right.\right)=R_{4}\left(n;\tau \right)\vartheta_3\left(nz|n \tau \right),
\end{align}
where
\begin{align}\label{cor211}
R_{4}\left(n;\tau \right)=n
 \sum_{\substack{
r_{1}+\cdots+r_{n}=0\\ r_{1}, \ldots, r_{n}=-\infty}}^{\infty}q^{r^{2}_{1}+\cdots+r^{2}_{n}}.
\end{align}
\end{cor}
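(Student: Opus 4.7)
The plan is to obtain Corollary \ref{cor209} as an immediate specialization of Theorem \ref{cor400}. That theorem asserts, under the hypothesis that $mn$ is even and $y_1+\cdots+y_n=0$, the identity
\[
\sum_{k=0}^{mn-1}\prod_{j=1}^{n}\vartheta_4\!\left(z+y_j+\tfrac{k\pi}{mn}\bigm|\tau\right)=R_4(m,n;\tau)\,\vartheta_3(mnz|m^2n\tau),
\]
with $R_4(m,n;\tau)$ given by \eqref{cor402}. I would simply set $m=1$, so that the parity hypothesis ``$mn$ is even'' reduces to ``$n$ is even'', which is exactly the hypothesis of the corollary, and then set $y_1=y_2=\cdots=y_n=0$, which trivially satisfies $\sum y_j=0$.

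Under this substitution, the left-hand side collapses to $\sum_{k=0}^{n-1}\vartheta_4^{\,n}(z+\tfrac{k\pi}{n}|\tau)$, the theta factor on the right becomes $\vartheta_3(nz|n\tau)$, the prefactor $mn$ in \eqref{cor402} becomes $n$, and every phase $e^{2i(r_1y_1+\cdots+r_ny_n)}$ reduces to $1$. What remains is exactly \eqref{cor210} together with \eqref{cor211}, and the proof is complete.

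There is no genuine obstacle, since all of the elliptic-function work has already been discharged upstream: Theorem \ref{LUO9} was proved by the double-periodicity argument of Section 2, and Theorem \ref{cor400} was then obtained from it by taking $a=0$, $b=n$ and invoking the empty-product convention $\prod_{j=1}^{0}=1$. If one preferred to short-circuit Theorem \ref{cor400}, one could specialize Theorem \ref{LUO9} directly with $a=0$, $b=n$, $m=1$, $x_j$ absent, and $y_i=0$; the sign factor $(-1)^{s_1+\cdots+s_b}$ appearing in \eqref{Luo91} is then identically $1$ because the constraint $s_1+\cdots+s_n=0$ forces the exponent to be even (in fact zero), and one recovers the same formula \eqref{cor211}. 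The mildest point worth verifying is purely bookkeeping, namely that the parity condition ``$mb$ is even'' in Theorem \ref{LUO9} becomes ``$n$ is even'' after the substitution, consistent with the hypothesis imposed in Corollary \ref{cor209}.
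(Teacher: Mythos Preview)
Your proposal is correct and follows exactly the paper's own route: the paper states just before Corollaries~\ref{cor200}--\ref{cor209} that they are obtained by taking $m=1$ and $y_1=\cdots=y_n=0$ in Theorems~\ref{cor149}--\ref{cor400}, which is precisely your specialization of Theorem~\ref{cor400}. Your additional remark about bypassing Theorem~\ref{cor400} and specializing Theorem~\ref{LUO9} directly (using that the constraint $s_1+\cdots+s_n=0$ kills the sign factor) is also correct and merely unpacks how the paper itself derived Theorem~\ref{cor400} from Theorem~\ref{LUO9}.
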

\begin{rem}
\thmref{cor206} is just Ramanujan's circular summation \thmref{Luo124}. \thmref{cor200}, \thmref{cor203} and \thmref{cor209} are some analogues of Ramanujan's circular summation.
\end{rem}

If setting $x_{1}=x_{2}=\cdots=x_{a}=x$ and $y_{1}=y_{2}=\cdots=y_{b}=y$ in \thmref{LUO4}--\thmref{LUO9} respectively, then we obtain the following interesting special cases.
\begin{thm}\label{LUO20}
Let $n$ be even, $m$ be any positive integers, $a$ and $b$ be non-negative integers such that $a+b=n$, and $x$ and $y$ be any complex numbers such that $ax+by=0$.
\begin{itemize}
\item When $ma$ is even, we have
\end{itemize}
\begin{align}\label{LUO21}
\sum_{k=0}^{mn-1}\vartheta_1^{a}\left(z+x+\frac{k\pi
}{mn}\left|\tau\right.\right)\vartheta_2^{b}\left(z+y+\frac{k\pi
}{mn}\left|\tau\right.\right)=R_{1,2}\left(x,y;m,n;\tau \right)\vartheta_3\left(mnz|{m^2n}\tau\right).
\end{align}
\begin{itemize}
\item When $ma$ is odd, we have
\end{itemize}
\begin{align}\label{LUO22}
\sum_{k=0}^{mn-1}\vartheta_1^{a}\left(z+x+\frac{k\pi
}{mn}\left|\tau\right.\right)\vartheta_2^{b}\left(z+y+\frac{k\pi
}{mn}\left|\tau\right.\right)=R_{1,2}\left(x,y;m,n;\tau \right)\vartheta_4\left(mnz|{m^2n}\tau\right),
\end{align}
where
\begin{align}\label{LUO23}
R_{1,2}\left(x,y;m,n;\tau \right)
=& mni^{a}q^{-\frac{n}{4}}
\sum_{\substack{r_{1},\ldots,r_{a},s_{1},\ldots,s_{b}=-\infty
\\
2(r_{1}+\cdots+r_{a}+s_{1}+\cdots+s_{b})+n=0}}^{\infty}\left(-1\right)^{r_{1}+\cdots+r_{a}}\notag \\
& \times
q^{r_{1}^{2}+\cdots+r_{a}^{2}+s_{1}^{2}+\cdots+s_{b}^{2}}e^{2i\left((r_{1}+\cdots+r_{a})x+(s_{1}+\cdots+s_{b})y \right)}.
\end{align}
\end{thm}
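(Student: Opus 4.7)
The plan is to obtain \thmref{LUO20} as a direct specialization of \thmref{LUO4}. In \eqref{Luo40} and \eqref{Luo40a}, I would set $x_1 = x_2 = \cdots = x_a = x$ and $y_1 = y_2 = \cdots = y_b = y$. Under this substitution the general balancing hypothesis $x_1 + \cdots + x_a + y_1 + \cdots + y_b = 0$ collapses precisely to $ax + by = 0$, which is the hypothesis of \thmref{LUO20}, so the application of \thmref{LUO4} is legitimate.

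On the left-hand sides of \eqref{Luo40} and \eqref{Luo40a}, each inner product consists of $a$ (respectively $b$) identical factors, so
\[
\prod_{j=1}^{a}\vartheta_1\!\left(z+x+\tfrac{k\pi}{mn}\Big|\tau\right)\prod_{i=1}^{b}\vartheta_2\!\left(z+y+\tfrac{k\pi}{mn}\Big|\tau\right)
=\vartheta_1^{a}\!\left(z+x+\tfrac{k\pi}{mn}\Big|\tau\right)\vartheta_2^{b}\!\left(z+y+\tfrac{k\pi}{mn}\Big|\tau\right),
\]
which is exactly the summand appearing in \eqref{LUO21} and \eqref{LUO22}. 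The case split in \thmref{LUO4} (parity of $ma$) is preserved verbatim, so the right-hand side is $\vartheta_3(mnz|m^2n\tau)$ when $ma$ is even and $\vartheta_4(mnz|m^2n\tau)$ when $ma$ is odd, as required.

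It remains to simplify the constant in \eqref{Luo41}. Under our specialization the exponential factor becomes
\[
e^{2i(r_1 x_1+\cdots+r_a x_a+s_1 y_1+\cdots+s_b y_b)}
=e^{2i\bigl((r_1+\cdots+r_a)x+(s_1+\cdots+s_b)y\bigr)},
\]
while the quadratic Gaussian factor $q^{r_1^2+\cdots+r_a^2+s_1^2+\cdots+s_b^2}$, the sign $(-1)^{r_1+\cdots+r_a}$, the prefactor $mni^a q^{-n/4}$, and the summation condition $2(r_1+\cdots+r_a+s_1+\cdots+s_b)+n=0$ all carry over unchanged. This yields exactly the expression \eqref{LUO23} for $R_{1,2}(x,y;m,n;\tau)$, completing the deduction.

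The argument is therefore purely a specialization with bookkeeping of the exponentials; no new analytic obstacle arises. The only point requiring a small amount of care is verifying that the parity condition on $ma$ in \thmref{LUO4} survives the substitution (which it does, since $m$, $a$, and $b$ are unchanged), so the correct choice between $\vartheta_3$ and $\vartheta_4$ on the right-hand side is retained.
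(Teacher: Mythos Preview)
Your proposal is correct and follows exactly the same approach as the paper: the paper obtains \thmref{LUO20} by setting $x_{1}=x_{2}=\cdots=x_{a}=x$ and $y_{1}=y_{2}=\cdots=y_{b}=y$ in \thmref{LUO4}, which is precisely the specialization you carry out.
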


\begin{thm}\label{LUO24}
Let $a$ be even, $m$ and $n$ be any positive integers, $a$ and $b$ be non-negative integers such that $a+b=n$, and $x$ and $y$ be any complex numbers such that $ax+by=0$. We have
\begin{align}\label{LUO25}
\sum_{k=0}^{mn-1}\vartheta_1^{a}\left(z+x+\frac{k \pi
}{mn}\left|\tau\right.\right)\vartheta_3^{b}\left(z+y+\frac{k\pi
}{mn}\left|\tau\right.\right)=R_{1,3}\left(x,y;m,n;\tau \right)\vartheta_3\left(mnz|{m^2n}\tau\right),
\end{align}
where
\begin{align}\label{LUO26}
R_{1,3}\left(x,y;m,n;\tau 
\right)=& mni^{a}q^{\frac{a}{4}}
\sum_{\substack{r_{1},\ldots,r_{a},s_{1},\ldots,s_{b}=-\infty
\\
2(r_{1}+\cdots+r_{a}+s_{1}+\cdots+s_{b})+a=0}}^{\infty}\left(-1\right)^{r_{1}+\cdots+r_{a}}\notag\\
& \times
q^{r_{1}^{2}+\cdots+r_{a}^{2}+s_{1}^{2}+\cdots+s_{b}^{2}+r_{1}+\cdots+r_{a}} e^{i\left(2(r_{1}+\cdots+r_{a})x+2(s_{1}+\cdots+s_{b})y+ax \right)}.
\end{align}
\end{thm}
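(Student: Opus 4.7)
The plan is to obtain \thmref{LUO24} as an immediate specialization of \thmref{LUO5}, exactly in the spirit in which the paper has already deduced \thmref{cor149}--\thmref{cor400} and \corref{cor200}--\corref{cor209} from \thmref{LUO4} and \thmref{LUO9}. The hypotheses of \thmref{LUO24} are a strict sub-case of those of \thmref{LUO5}: $a$ is even, $m,n$ are positive integers, $a+b=n$, and the scalar condition $ax+by=0$ is precisely the previous hypothesis $x_{1}+\cdots+x_{a}+y_{1}+\cdots+y_{b}=0$ once we impose $x_{j}=x$ and $y_{i}=y$. Hence no new analytic input is needed; the argument is a clean substitution and rearrangement.

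Concretely, first I would set $x_{1}=\cdots=x_{a}=x$ and $y_{1}=\cdots=y_{b}=y$ in \eqref{Luo50}. The two products on the left collapse to powers,
\begin{align*}
\prod_{j=1}^{a}\vartheta_1\!\left(z+x+\tfrac{k\pi}{mn}\big|\tau\right)\prod_{i=1}^{b}\vartheta_3\!\left(z+y+\tfrac{k\pi}{mn}\big|\tau\right)=\vartheta_1^{a}\!\left(z+x+\tfrac{k\pi}{mn}\big|\tau\right)\vartheta_3^{b}\!\left(z+y+\tfrac{k\pi}{mn}\big|\tau\right),
\end{align*}
which reproduces the left-hand side of \eqref{LUO25}, and the right-hand side $R_{1,3}(m,n;\tau)\vartheta_3(mnz|m^{2}n\tau)$ of \eqref{Luo50} has exactly the theta factor required in \eqref{LUO25}. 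Second, I would simplify the expression \eqref{Luo51} for the constant: under the specialization,
\begin{align*}
r_{1}x_{1}+\cdots+r_{a}x_{a}=(r_{1}+\cdots+r_{a})x,\qquad s_{1}y_{1}+\cdots+s_{b}y_{b}=(s_{1}+\cdots+s_{b})y,\qquad x_{1}+\cdots+x_{a}=ax,
\end{align*}
so the exponential in \eqref{Luo51} collapses to $e^{i\bigl(2(r_{1}+\cdots+r_{a})x+2(s_{1}+\cdots+s_{b})y+ax\bigr)}$, which is exactly the exponential in \eqref{LUO26}. The rest of the summand (the sign, the power of $q$, and the $q$-exponent $r_{1}^{2}+\cdots+r_{a}^{2}+s_{1}^{2}+\cdots+s_{b}^{2}+r_{1}+\cdots+r_{a}$) is unchanged, and the summation condition $2(r_{1}+\cdots+r_{a}+s_{1}+\cdots+s_{b})+a=0$ is preserved. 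Denoting the resulting constant by $R_{1,3}(x,y;m,n;\tau)$ yields \eqref{LUO26}, which completes the deduction.

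The step most in need of care is the bookkeeping in the exponential factor, particularly tracking the trailing term $x_{1}+\cdots+x_{a}=ax$, which originates from the shift $z\mapsto z+x_{j}+k\pi/(mn)$ applied to \eqref{Theta1} inside the proof of \thmref{LUO5}; this is the only place where the specialization could be misread, so I would write it out explicitly. If a self-contained proof were preferred, one could instead repeat the elliptic-function scheme used for \thmref{LUO4}: define $f(z)$ as the left-hand side of \eqref{LUO25} with $z\mapsto z/(mn)$ and $\tau\mapsto \tau/(m^{2}n)$, verify $f(z+\pi)=f(z)$ and $f(z+\pi\tau)=q^{-1}e^{-2iz}f(z)$ (using $ax+by=0$ together with $a$ even, so the parity factors in \eqref{Theta5} and the quasi-period factors from \eqref{Theta9}, \eqref{Theta11} cancel), conclude that $f(z)/\vartheta_3(z|\tau)$ is a doubly periodic entire function and hence constant by Liouville's theorem, and finally identify the constant by equating constant terms in the Fourier expansion in $e^{iz}$. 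However, this would merely be a transcription of the argument already carried out for \thmref{LUO5}, so the direct specialization is the more economical route.
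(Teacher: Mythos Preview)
Your proposal is correct and matches the paper's own approach exactly: the paper states just before \thmref{LUO20} that all of \thmref{LUO20}--\thmref{LUO38} are obtained from \thmref{LUO4}--\thmref{LUO9} by setting $x_{1}=\cdots=x_{a}=x$ and $y_{1}=\cdots=y_{b}=y$, which is precisely the substitution you carry out. Your careful tracking of the exponential term $x_{1}+\cdots+x_{a}=ax$ is the only nontrivial bookkeeping step, and you handle it correctly.
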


\begin{thm}\label{LUO27}
Let $a$ be even, $m$ and $n$ be any positive integers, $a$ and $b$ be non-negative integers such that $a+b=n$, and $x$ and $y$ be any complex numbers such that $ax+by=0$.
\begin{itemize}
\item When $mn$ is even, we have
\end{itemize}
\begin{align}\label{LUO28}
\sum_{k=0}^{mn-1}\vartheta_1^{a}\left(z+x+\frac{k\pi
}{mn}\left|\tau\right.\right)\vartheta_4^{b}\left(z+y+\frac{k\pi
}{mn}\left|\tau\right.\right)=R_{1,4}\left(x,y;m,n;\tau \right)\vartheta_3\left(mnz|{m^2n}\tau\right).
\end{align}
\begin{itemize}
\item When $mn$ is odd, we have
\end{itemize}
\begin{align}\label{LUO29}
\sum_{k=0}^{mn-1}\vartheta_1^{a}\left(z+x+\frac{k\pi
}{mn}\left|\tau\right.\right)\vartheta_4^{b}\left(z+y+\frac{k\pi
}{mn}\left|\tau\right.\right)=R_{1,4}\left(x,y;m,n;\tau \right)\vartheta_4\left(mnz|{m^2n}\tau\right),
\end{align}
where
\begin{align}\label{LUO30}
R_{1,4}\left(x,y;m,n;\tau 
\right)=& mnq^{\frac{a}{4}}
\sum_{\substack{r_{1},\ldots,r_{a},s_{1},\ldots,s_{b}=-\infty
\\
2(r_{1}+\cdots+r_{a}+s_{1}+\cdots+s_{b})+a=0}}^{\infty} q^{r_{1}^{2}+\cdots+r_{a}^{2}+s_{1}^{2}+\cdots+s_{b}^{2}+r_{1}+\cdots+r_{a}} \notag\\
& \times
e^{i\left(2(r_{1}+\cdots+r_{a})x+2(s_{1}+\cdots+s_{b})y+ax \right)}.
\end{align}
\end{thm}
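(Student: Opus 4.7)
The plan is to derive Theorem~\ref{LUO27} as a direct specialization of Theorem~\ref{LUO6}, by collapsing the parameters via $x_1 = x_2 = \cdots = x_a = x$ and $y_1 = y_2 = \cdots = y_b = y$. No new elliptic-function argument is needed: Theorem~\ref{LUO6} already did the analytic work (construction of $f(z)/\vartheta_3(z|\tau)$ or $f(z)/\vartheta_4(z|\tau)$ as a doubly-periodic function with a single simple pole in a fundamental parallelogram, identification of the constant by equating constant terms), and specialization is only a matter of bookkeeping.

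First, I would verify that the hypotheses of Theorem~\ref{LUO6} are satisfied after the substitution. The global condition $\sum_{j=1}^{a} x_j + \sum_{i=1}^{b} y_i = 0$ becomes $ax + by = 0$, which is exactly the hypothesis of Theorem~\ref{LUO27}. The requirement that $a$ be even, the hypotheses on $m$ and $n$, and the parity dichotomy on $mn$ all pass through unchanged. Consequently, the left-hand side of \eqref{Luo60} (respectively \eqref{Luo60a}) becomes the left-hand side of \eqref{LUO28} (respectively \eqref{LUO29}), and the right-hand side retains the factor $\vartheta_3(mnz|m^2n\tau)$ when $mn$ is even and $\vartheta_4(mnz|m^2n\tau)$ when $mn$ is odd.

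Second, I would simplify the constant $R_{1,4}(m,n;\tau)$ given in \eqref{Luo61}. Under the specialization, the exponential argument reduces by
\begin{align*}
2 r_1 x_1 + \cdots + 2 r_a x_a &= 2(r_1 + \cdots + r_a)\, x,\\
2 s_1 y_1 + \cdots + 2 s_b y_b &= 2(s_1 + \cdots + s_b)\, y,\\
x_1 + \cdots + x_a &= a x,
\end{align*}
and substituting these into \eqref{Luo61} yields precisely the expression \eqref{LUO30} for $R_{1,4}(x,y;m,n;\tau)$. The summation index set and the weight $q^{r_1^2+\cdots+r_a^2+s_1^2+\cdots+s_b^2+r_1+\cdots+r_a}$ are unaffected by the substitution.

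There is essentially no obstacle in this argument; the only point that merits care is confirming that the parity dichotomy governing whether the right-hand side carries $\vartheta_3$ or $\vartheta_4$ is preserved, which it is, since this dichotomy depends only on the parity of $mn$ and not on the parameters $x_j, y_i$. Thus Theorem~\ref{LUO27} follows at once from Theorem~\ref{LUO6} by the stated specialization.
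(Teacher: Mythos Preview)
Your proposal is correct and matches the paper's own approach: the paper introduces Theorems~\ref{LUO20}--\ref{LUO38} (including Theorem~\ref{LUO27}) precisely by stating ``If setting $x_{1}=x_{2}=\cdots=x_{a}=x$ and $y_{1}=y_{2}=\cdots=y_{b}=y$ in \thmref{LUO4}--\thmref{LUO9} respectively, then we obtain the following interesting special cases,'' which is exactly the specialization you carry out from Theorem~\ref{LUO6}. Your verification that the hypothesis $\sum x_j + \sum y_i = 0$ collapses to $ax+by=0$ and that the exponential in \eqref{Luo61} reduces to that in \eqref{LUO30} is the full content of the derivation.
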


\begin{thm}\label{LUO31}
Let $a$ be even, $m$ and $n$ be any positive integers, $a$ and $b$ be non-negative integers such that $a+b=n$, and $x$ and $y$ be any complex numbers such that $ax+by=0$. We have
\begin{align}\label{LUO32}
\sum_{k=0}^{mn-1}\vartheta_2^{a}\left(z+x+\frac{k\pi
}{mn}\left|\tau\right.\right)\vartheta_3^{b}\left(z+y+\frac{k\pi
}{mn}\left|\tau\right.\right)=R_{2,3}\left(x,y;m,n;\tau \right)\vartheta_3\left(mnz|{m^2n}\tau\right),
\end{align}
where
\begin{align}\label{LUO33}
R_{2,3}\left(x,y;m,n;\tau 
\right)=& mnq^{\frac{a}{4}}
\sum_{\substack{r_{1},\ldots,r_{a},s_{1},\ldots,s_{b}=-\infty
\\
2(r_{1}+\cdots+r_{a}+s_{1}+\cdots+s_{b})+a=0}}^{\infty}q^{r_{1}^{2}+\cdots+r_{a}^{2}+s_{1}^{2}+\cdots+s_{b}^{2}+r_{1}+\cdots+r_{a}}\notag\\
& \times
e^{i\left(2(r_{1}+\cdots+r_{a})x+2(s_{1}+\cdots+s_{b})y+ax \right)}.
\end{align}
\end{thm}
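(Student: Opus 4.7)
The plan is to obtain Theorem \ref{LUO31} as a direct specialization of Theorem \ref{LUO7} by collapsing the free parameters $x_{1},\ldots,x_{a},y_{1},\ldots,y_{b}$ down to the two-parameter family $x_{1}=\cdots=x_{a}=x$, $y_{1}=\cdots=y_{b}=y$. Under this substitution, the hypothesis $x_{1}+\cdots+x_{a}+y_{1}+\cdots+y_{b}=0$ required by Theorem \ref{LUO7} becomes $ax+by=0$, which is precisely the assumption imposed in Theorem \ref{LUO31}; the parity and positivity hypotheses ($a$ even, $m$ and $n$ positive integers, $a+b=n$) are transferred verbatim.

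First I would note that the left-hand side of \eqref{Luo70} collapses to the left-hand side of \eqref{LUO32}: each of the $a$ factors $\vartheta_{2}(z+x_{j}+k\pi/(mn)|\tau)$ becomes $\vartheta_{2}(z+x+k\pi/(mn)|\tau)$, so the product is $\vartheta_{2}^{a}(z+x+k\pi/(mn)|\tau)$, and similarly for the $\vartheta_{3}$ factors. The right-hand theta factor $\vartheta_{3}(mnz|m^{2}n\tau)$ is unchanged, so what remains is to check that the constant $R_{2,3}(m,n;\tau)$ of \eqref{Luo71} specializes to $R_{2,3}(x,y;m,n;\tau)$ of \eqref{LUO33}.

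Next I would carry out the collapse inside the exponential factor of \eqref{Luo71}. Under the specialization, $2r_{1}x_{1}+\cdots+2r_{a}x_{a}=2(r_{1}+\cdots+r_{a})x$, $2s_{1}y_{1}+\cdots+2s_{b}y_{b}=2(s_{1}+\cdots+s_{b})y$, and $x_{1}+\cdots+x_{a}=ax$, so the exponential in \eqref{Luo71} becomes exactly
\[
e^{i\left(2(r_{1}+\cdots+r_{a})x+2(s_{1}+\cdots+s_{b})y+ax\right)},
\]
matching \eqref{LUO33}. The summation constraint $2(r_{1}+\cdots+r_{a}+s_{1}+\cdots+s_{b})+a=0$, the Gaussian weight $q^{r_{1}^{2}+\cdots+s_{b}^{2}+r_{1}+\cdots+r_{a}}$, and the overall prefactor $mnq^{a/4}$ are untouched by the specialization. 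This immediately yields \eqref{LUO32} with constant \eqref{LUO33}.

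Because Theorem \ref{LUO7} is invoked as a black box and the specialization affects only parameters whose sum vanishes, there is essentially no analytic obstacle here; the only care required is bookkeeping, ensuring that each of the $a$ phase contributions $e^{ix_{j}}$ multiplies into the claimed single factor $e^{iax}$ and that the constraint $ax+by=0$ is recognized as the image of $\sum x_{j}+\sum y_{i}=0$. Accordingly, the proof can be completed in a few lines once Theorem \ref{LUO7} is in hand.
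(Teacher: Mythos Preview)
Your proposal is correct and matches the paper's approach exactly: the paper introduces Theorems \ref{LUO20}--\ref{LUO38} with the single sentence ``If setting $x_{1}=x_{2}=\cdots=x_{a}=x$ and $y_{1}=y_{2}=\cdots=y_{b}=y$ in \thmref{LUO4}--\thmref{LUO9} respectively, then we obtain the following interesting special cases,'' and gives no further argument. Your bookkeeping verification of how \eqref{Luo71} collapses to \eqref{LUO33} is in fact more detailed than anything the paper provides.
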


\begin{thm}\label{LUO34}
Let $a$ be even, $m$ and $n$ be any positive integers, $a$ and $b$ be non-negative integers such that $a+b=n$, and $x$ and $y$ be any complex numbers such that $ax+by=0$.
\begin{itemize}
\item When $mb$ is even, we have
\end{itemize}
\begin{align}\label{LUO35}
\sum_{k=0}^{mn-1}\vartheta_2^{a}\left(z+x+\frac{k\pi
}{mn}\left|\tau\right.\right)\vartheta_4^{b}\left(z+y+\frac{k\pi
}{mn}\left|\tau\right.\right)=R_{2,4}\left(x,y;m,n;\tau \right)\vartheta_3\left(mnz|{m^2n}\tau\right).
\end{align}
\begin{itemize}
\item When $mb$ is odd, we have
\end{itemize}
\begin{align}\label{LUO36}
\sum_{k=0}^{mn-1}\vartheta_2^{a}\left(z+x+\frac{k\pi
}{mn}\left|\tau \right.\right)\vartheta_4^{b}\left(z+y+\frac{k\pi
}{mn}\left|\tau\right.\right)=R_{2,4}\left(x,y;m,n;\tau \right)\vartheta_4\left(mnz|{m^2n}\tau\right),
\end{align}
where
\begin{align}\label{LUO37}
R_{2,4} \left(x,y;m,n;\tau \right)=& mnq^{\frac{a}{4}}
\sum_{\substack{r_{1},\ldots,r_{a},s_{1},\ldots,s_{b}=-\infty
\\
2(r_{1}+\cdots+r_{a}+s_{1}+\cdots+s_{b})+a=0}}^{\infty}\left(-1\right)^{s_{1}+\cdots+s_{b}}\notag\\
& \times
q^{r_{1}^{2}+\cdots+r_{a}^{2}+s_{1}^{2}+\cdots+s_{b}^{2}+r_{1}+\cdots+r_{a}}\notag\\
& \times
e^{i\left(2(r_{1}+\cdots+r_{a})x+2(s_{1}+\cdots+s_{b})y+ax \right)}.
\end{align}
\end{thm}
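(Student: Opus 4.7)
The plan is to obtain \thmref{LUO34} as a direct specialization of the previously established \thmref{LUO8}, since \thmref{LUO34} is simply the ``single-parameter'' case of \thmref{LUO8} in which all of the $x_j$ are collapsed to a common value $x$ and all of the $y_i$ are collapsed to a common value $y$. Under this substitution the constraint $x_{1}+\cdots+x_{a}+y_{1}+\cdots+y_{b}=0$ required in \thmref{LUO8} becomes exactly $ax+by=0$, which is the hypothesis imposed in \thmref{LUO34}; and the parity condition on $a$ (evenness) as well as the dichotomy between $mb$ even and $mb$ odd are inherited verbatim from \thmref{LUO8}. So the work reduces to substitution and bookkeeping of exponential factors.

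First I would set $x_{1}=x_{2}=\cdots=x_{a}=x$ and $y_{1}=y_{2}=\cdots=y_{b}=y$ on the left-hand side of \eqref{Luo80} and \eqref{Luo80a}; the products $\prod_{j=1}^{a}\vartheta_2(z+x_j+k\pi/(mn)|\tau)$ and $\prod_{i=1}^{b}\vartheta_4(z+y_i+k\pi/(mn)|\tau)$ become $\vartheta_2^{a}(z+x+k\pi/(mn)|\tau)$ and $\vartheta_4^{b}(z+y+k\pi/(mn)|\tau)$ respectively, which matches the left-hand side of \eqref{LUO35} and \eqref{LUO36}. On the right-hand side, \thmref{LUO8} already gives us the correct $\vartheta_{3}$ versus $\vartheta_{4}$ alternative according to the parity of $mb$, so no further splitting is required.

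Next I would specialize the exponential factor appearing in $R_{2,4}(m,n;\tau)$ from \eqref{Luo81}. The argument
\[
e^{i\left(2r_{1}x_{1}+\cdots+2r_{a}x_{a}+2s_{1}y_{1}+\cdots+2s_{b}y_{b}+x_{1}+\cdots+x_{a}\right)}
\]
collapses, upon setting $x_j=x$ and $y_i=y$, to
\[
e^{i\left(2(r_{1}+\cdots+r_{a})x+2(s_{1}+\cdots+s_{b})y+ax\right)},
\]
which is precisely the exponential appearing in \eqref{LUO37}. All other factors, namely $mn q^{a/4}$, the sign $(-1)^{s_1+\cdots+s_b}$, the summation constraint $2(r_1+\cdots+r_a+s_1+\cdots+s_b)+a=0$, and the weight $q^{r_1^2+\cdots+r_a^2+s_1^2+\cdots+s_b^2+r_1+\cdots+r_a}$, are independent of the individual $x_j$ and $y_i$ and therefore pass through untouched.

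There is essentially no obstacle here, as the entire content rests on having already proven \thmref{LUO8}; the only thing to verify carefully is that the compatibility condition $ax+by=0$ is indeed the correct reduction of the vanishing-sum hypothesis of \thmref{LUO8}, which is immediate since $ax+by = \sum_{j=1}^{a}x_j+\sum_{i=1}^{b}y_i$ under the specialization. Combining the specialized left-hand side, the unchanged parity dichotomy on the right-hand side, and the simplified expression for $R_{2,4}(x,y;m,n;\tau)$ yields exactly \eqref{LUO35}, \eqref{LUO36}, and \eqref{LUO37}, completing the proof.
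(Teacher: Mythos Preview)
Your proposal is correct and matches the paper's approach exactly: the paper obtains \thmref{LUO34} (along with \thmref{LUO20}--\thmref{LUO38}) by setting $x_{1}=\cdots=x_{a}=x$ and $y_{1}=\cdots=y_{b}=y$ in \thmref{LUO4}--\thmref{LUO9}, so in particular \thmref{LUO34} is the specialization of \thmref{LUO8} just as you describe. Your verification of the constraint $ax+by=0$ and the collapse of the exponential factor are precisely the bookkeeping needed.
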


\begin{thm}\label{LUO38}
Let $m$ and $n$ be any positive integers, $a$ and $b$ be non-negative integers such that $a+b=n$, and $x$ and $y$ be any complex numbers such that $ax+by=0$.
\begin{itemize}
\item When $mb$ is even, we have
\end{itemize}
\begin{align}\label{LUO39}
\sum_{k=0}^{mn-1}\vartheta_3^{a}\left(z+x+\frac{k\pi
}{mn}\left|\tau\right.\right)\vartheta_4^{b}\left(z+y+\frac{k\pi
}{mn}\left|\tau\right.\right)=R_{3,4}\left(x,y;m,n;\tau \right)\vartheta_3\left(mnz|{m^2n}\tau\right).
\end{align}
\begin{itemize}
\item When $mb$ is odd, we have
\end{itemize}
\begin{align}\label{LUO40}
\sum_{k=0}^{mn-1}\vartheta_3^{a}\left(z+x+\frac{k\pi
}{mn}\left|\tau\right.\right)\vartheta_4^{b}\left(z+y+\frac{k\pi
}{mn}\left|\tau\right.\right)=R_{3,4}\left(\tau \right)\vartheta_4\left(mnz|{m^2n}\tau\right),
\end{align}
where
\begin{align}\label{LUO41}
R_{3,4} \left(x,y;m,n;\tau \right)=& mn
\sum_{\substack{r_{1},\ldots,r_{a},s_{1},\ldots,s_{b}=-\infty
\\
r_{1}+\cdots+r_{a}+s_{1}+\cdots+s_{b}=0}}^{\infty}(-1)^{s_{1}+\cdots+s_{b}}q^{r_{1}^{2}+\cdots+r_{a}^{2}+s_{1}^{2}+\cdots+s_{b}^{2}}\notag \\
& \times
e^{2i\left((r_{1}+\cdots+r_{a})x+(s_{1}+\cdots+s_{b})y \right)}.
\end{align}
\end{thm}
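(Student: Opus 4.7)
The plan is to derive Theorem \ref{LUO38} as a direct specialization of Theorem \ref{LUO9}, obtained by setting
\[
x_1 = x_2 = \cdots = x_a = x, \qquad y_1 = y_2 = \cdots = y_b = y
\]
in the identities \eqref{Luo90}, \eqref{Luo90a}, and \eqref{Luo91}. First I would check that the hypothesis of Theorem \ref{LUO9}, namely $x_1 + \cdots + x_a + y_1 + \cdots + y_b = 0$, collapses under this substitution to $ax + by = 0$, which is exactly the assumption of the present theorem, so Theorem \ref{LUO9} applies.

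Next I would perform the substitution on both sides. On the left-hand side, the products $\prod_{j=1}^{a}\vartheta_3(z+x_j+k\pi/(mn)|\tau)$ and $\prod_{i=1}^{b}\vartheta_4(z+y_i+k\pi/(mn)|\tau)$ collapse to the powers $\vartheta_3^{a}(z+x+k\pi/(mn)|\tau)$ and $\vartheta_4^{b}(z+y+k\pi/(mn)|\tau)$, giving the left-hand sides of \eqref{LUO39} and \eqref{LUO40}. On the right, the factor $\vartheta_3(mnz|m^2n\tau)$ or $\vartheta_4(mnz|m^2n\tau)$ is independent of the $x_j, y_i$ and transfers verbatim, with the dichotomy governed by the parity of $mb$ inherited directly from Theorem \ref{LUO9}.

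Finally I would specialize the constant $R_{3,4}(m,n;\tau)$ in \eqref{Luo91}. The only factor depending on the $x_j, y_i$ is the exponential $e^{2i(r_1 x_1 + \cdots + r_a x_a + s_1 y_1 + \cdots + s_b y_b)}$, which under the substitution becomes $e^{2i((r_1+\cdots+r_a)x + (s_1+\cdots+s_b)y)}$; the lattice constraint $r_1+\cdots+r_a+s_1+\cdots+s_b = 0$, the Gaussian weight $q^{r_1^2+\cdots+s_b^2}$, the sign $(-1)^{s_1+\cdots+s_b}$, and the prefactor $mn$ all persist unchanged, producing \eqref{LUO41}. No genuine obstacle is anticipated: the argument is a purely notational collapse of Theorem \ref{LUO9}, and the only care needed is matching the parity split and rewriting the exponent correctly. (Should a self-contained derivation be preferred, one could alternatively mimic the elliptic-function argument carried out for Theorem \ref{LUO4} in Section 2: use the quasi-periodicities \eqref{Theta7}--\eqref{Theta8} of $\vartheta_3$ and $\vartheta_4$ to show that the left-hand side divided by $\vartheta_3(z|\tau)$ or $\vartheta_4(z|\tau)$ is an elliptic function with no poles in the fundamental parallelogram, hence constant, and then extract the constant from the Fourier expansions of $\vartheta_3$ and $\vartheta_4$.)
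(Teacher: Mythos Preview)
Your proposal is correct and follows exactly the paper's approach: the paper states just before Theorem \ref{LUO20} that Theorems \ref{LUO20}--\ref{LUO38} are obtained by setting $x_{1}=\cdots=x_{a}=x$ and $y_{1}=\cdots=y_{b}=y$ in Theorems \ref{LUO4}--\ref{LUO9}, and your specialization of Theorem \ref{LUO9} with the check on the hypothesis $ax+by=0$, the collapse of the products to powers, and the reduction of the exponential in $R_{3,4}$ is precisely this.
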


\begin{rem}
\thmref{LUO20}--\thmref{LUO38} are some analogues of \thmref{Luo128} of Zeng.
\end{rem}

\section{\bf Some identities of Jacobi's theta functions}
In this section, we obtain some interesting identities of Jacobi's theta functions from \thmref{LUO4}-\thmref{LUO9}.

The multiple theta series $a(y_{1},y_{2}|\tau), b(y_{1},y_{2}|\tau), c(y_{1},y_{2}|\tau), d(y_{1},y_{2}|\tau)$ are respectively defined by
\begin{align}\label{Theta80}
& a(y_{1},y_{2}|\tau)=\sum_{r_{1},r_{2}=-\infty}^{\infty}q^{r_{1}^{2}+r_{1}r_{2}+r_{2}^{2}}e^{2i\left(r_{1}y_{1}+r_{2}y_{2}\right)},\\
\label{Theta90}
& b(y_{1},y_{2}|\tau)=\sum_{r_{1},r_{2}=-\infty}^{\infty}(-1)^{r_{1}+r_{2}}q^{r_{1}^{2}+r_{1}r_{2}+r_{2}^{2}} e^{2i\left(r_{1}y_{1}+r_{2}y_{2}\right)},\\
\label{Theta93}
& c(y_{1},y_{2}|\tau)=\sum_{r_{1},r_{2}=-\infty}^{\infty} q^{r_{1}^{2}+r_{1}r_{2}+r_{2}^{2}+2r_{1}+2r_{2}}e^{i\left(2r_{1}y_{1}+2r_{2}y_{2}+y_{1}+y_{2}\right)},\\
\label{Theta92}
& d(y_{1},y_{2}|\tau)=\sum_{r_{1},r_{2}=-\infty}^{\infty}(-1)^{r_{1}+r_{2}}q^{r_{1}^{2}+r_{1}r_{2}+r_{2}^{2}+2r_{1}+2r_{2}}e^{i\left(2r_{1}y_{1}+2r_{2}y_{2}+y_{1}+y_{2}\right)}.
\end{align}
The corresponding cubic theta functions $a(\tau),b(\tau)$ and $c(\tau)$ are respectively defined by (see \cite{Borwein1991})
\begin{align}\label{Theta94}
& a(\tau)=a(0,0|\tau)=\sum_{r_{1}, r_{2}=-\infty}^{\infty}q^{r_{1}^{2}+r_{1}r_{2}+r_{2}^{2}},\\
\label{Theta95}
& b(\tau)=b(0,0|\tau)=\sum_{r_{1}, r_{2}=-\infty}^{\infty}(-1)^{r_{1}+r_{2}}q^{r_{1}^{2}+r_{1}r_{2}+r_{2}^{2}},\\
\label{Theta102}
& c(\tau)=c(0,0|\tau)=\sum_{r_{1},r_{2}=-\infty}^{\infty}q^{r_{1}^{2}+r_{1}r_{2}+r_{2}^{2}+2r_{1}+2r_{2}},\\
\label{Theta97}
& d(\tau)=d(0,0|\tau)=\sum_{r_{1},r_{2}=-\infty}^{\infty}(-1)^{r_{1}+r_{2}}q^{r_{1}^{2}+r_{1}r_{2}+r_{2}^{2}+2r_{1}+2r_{2}}.
\end{align}
\begin{itemize}
\item
Applications of \thmref{LUO4}.

Taking $m=n=2, a=b=1$ in \eqref{Luo40} of \thmref{LUO4}, then $x_{1}+y_{1}=0$.  Setting $x=x_{1}$, by \eqref{Luo41}, via some direct computations, we have
\begin{align*}
R_{1,2}\left(2,2;\tau \right)=-4q^{\frac14}\vartheta_{1}(2x|2\tau). 
\end{align*}
Hence we obtain the folllowing identity for products of two theta functions:
\begin{multline}
\vartheta_1\left(z+x\left|\tau\right.\right)\vartheta_2\left(z-x\left|\tau\right.\right)
-\vartheta_1\left(z-x\left|\tau\right.\right)\vartheta_2\left(z+x\left|\tau\right.\right)\\
+\vartheta_1\left(z+x+\frac{\pi}{4}\left|\tau\right.\right)\vartheta_2\left(z-x+\frac{\pi
}{4}\left|\tau\right.\right)
+\vartheta_1\left(z+x-\frac{\pi}{4}\left|\tau\right.\right)\vartheta_2\left(z-x-\frac{\pi
}{4}\left|\tau\right.\right)\\=-4q^{\frac14}\vartheta_{1}(2x|2\tau) \vartheta_{3}(4z|8\tau)
\end{multline}

Taking $n=2, m=a=b=1$ in \eqref{Luo40a} of \eqref{LUO4}, then $x_{1}+y_{1}=0$. Setting $x=x_{1}$, by \eqref{Luo41}, via some direct computations, we have
\begin{align}
R_{1,2}\left(1,2;\tau \right)=-2q^{\frac14}\vartheta_{1}(2x|2\tau).
\end{align}
Hence we obtain the folllowing identity for products of two theta functions:
\begin{align}
\vartheta_1\left(z+x\left|\tau\right.\right)\vartheta_2\left(z-x\left|\tau\right.\right)
-\vartheta_1\left(z-x\left|\tau\right.\right)\vartheta_2\left(z+x\left|\tau\right.\right)
=-2q^{\frac14}\vartheta_{1}(2x|2\tau) \vartheta_{4}(2z|2\tau).
\end{align}

\item
Applications of \thmref{LUO5}.

Taking $n=3, a=2,b=1$ in \eqref{Luo50} of \thmref{LUO5}, then $x_{1}+x_{2}+y_{1}=0$. By  \eqref{Luo51}, via some direct computations, we have
\begin{align*}
R_{1,3}\left(m,3;\tau \right)=-3m q^{\frac32} d(x_{1}-y_{1},x_{2}-y_{1}|2\tau).
\end{align*}
Hence we obtain the folllowing identity for products of three theta functions:
\begin{multline}\label{Theta98}
\sum_{k=0}^{3m-1}\vartheta_1\left(z+x_{1}+\frac{k \pi
}{3m}\left|\tau\right.\right)\vartheta_1\left(z+x_{2}+\frac{k \pi
}{3m}\left|\tau\right.\right) \vartheta_3\left(z+y_{1}+\frac{k\pi
}{3m}\left|\tau\right.\right)\\=-3m q^{\frac32} d(x_{1}-y_{1},x_{2}-y_{1}|2\tau)\vartheta_3\left(3mz|{3m^2}\tau\right).
\end{multline}

Further taking $m=1$ and $m=2$ in \eqref{Theta98} respectively, we get
\begin{multline}\label{Theta99}
\vartheta_1\left(z+x_{1}\left|\tau\right.\right)\vartheta_1\left(z+x_{2}\left|\tau\right.\right) \vartheta_3\left(z+y_{1}\left|\tau\right.\right)
+\vartheta_1\left(z+x_{1}+\frac{\pi}{3}\left|\tau\right.\right)\vartheta_1\left(z+x_{2}+\frac{\pi
}{3}\left|\tau\right.\right) \vartheta_3\left(z+y_{1}+\frac{\pi}{3}\left|\tau\right.\right)\\
+\vartheta_1\left(z+x_{1}-\frac{\pi}{3}\left|\tau\right.\right)\vartheta_1\left(z+x_{2}-\frac{\pi
}{3}\left|\tau\right.\right) \vartheta_3\left(z+y_{1}-\frac{\pi}{3}\left|\tau\right.\right)
=-3 q^{\frac32} d(x_{1}-y_{1},x_{2}-y_{1}|2\tau)\vartheta_3\left(3z|{3}\tau\right)
\end{multline}
and
\begin{multline}\label{Theta100}
\vartheta_2\left(z+x_{1}\left|\tau\right.\right)\vartheta_2\left(z+x_{2}\left|\tau\right.\right) \vartheta_4\left(z+y_{1}\left|\tau\right.\right)
+\vartheta_1\left(z+x_{1}\left|\tau\right.\right)\vartheta_1\left(z+x_{2}\left|\tau\right.\right) \vartheta_3\left(z+y_{1}\left|\tau\right.\right)\\
+\vartheta_1\left(z+x_{1}+\frac{\pi}{6}\left|\tau\right.\right)\vartheta_1\left(z+x_{2}+\frac{\pi
}{6}\left|\tau\right.\right)\vartheta_3\left(z+y_{1}+\frac{\pi}{6}\left|\tau\right.\right)\\
+\vartheta_1\left(z+x_{1}-\frac{\pi}{6}\left|\tau\right.\right)\vartheta_1\left(z+x_{2}-\frac{\pi
}{6}\left|\tau\right.\right) \vartheta_3\left(z+y_{1}-\frac{\pi}{6}\left|\tau\right.\right)\\
+\vartheta_1\left(z+x_{1}+\frac{\pi}{3}\left|\tau\right.\right)\vartheta_1\left(z+x_{2}+\frac{\pi
}{3}\left|\tau\right.\right) \vartheta_3\left(z+y_{1}+\frac{\pi}{3}\left|\tau\right.\right)\\
+\vartheta_1\left(z+x_{1}-\frac{\pi}{3}\left|\tau\right.\right)\vartheta_1\left(z+x_{2}-\frac{\pi
}{3}\left|\tau\right.\right) \vartheta_3\left(z+y_{1}-\frac{\pi}{3}\left|\tau\right.\right)
\\=-6 q^{\frac32} d(x_{1}-y_{1},x_{2}-y_{1}|2\tau)\vartheta_3\left(6z|{12}\tau\right).
\end{multline}
Setting $y_{1}=y_{2}=y_{3}=0$ in \eqref{Theta99} and \eqref{Theta100} respectively, we get 
\begin{multline}
\vartheta_1^2\left(z\left|\tau\right.\right) \vartheta_3\left(z\left|\tau\right.\right)
+\vartheta_1^2\left(z+\frac{\pi}{3}\left|\tau\right.\right)\vartheta_3\left(z+\frac{\pi}{3}\left|\tau\right.\right)+\vartheta_1^2\left(z-\frac{\pi}{3}\left|\tau\right.\right) \vartheta_3\left(z-\frac{\pi}{3}\left|\tau\right.\right)
=-3 q^{\frac32} d(2\tau)\vartheta_3\left(3z|{3}\tau\right).
\end{multline}
and
\begin{multline}
\vartheta_2^2\left(z+\left|\tau\right.\right)\vartheta_4\left(z\left|\tau\right.\right)
+\vartheta_1^2\left(z\left|\tau\right.\right) \vartheta_3\left(z\left|\tau\right.\right)\\
+\vartheta_1^2\left(z+\frac{\pi}{6}\left|\tau\right.\right)\vartheta_3\left(z+\frac{\pi}{6}\left|\tau\right.\right)
+\vartheta_1^2\left(z-\frac{\pi}{6}\left|\tau\right.\right)\vartheta_3\left(z-\frac{\pi}{6}\left|\tau\right.\right)\\
+\vartheta_1^2\left(z+\frac{\pi}{3}\left|\tau\right.\right)\vartheta_3\left(z+\frac{\pi}{3}\left|\tau\right.\right)
+\vartheta_1^2\left(z-\frac{\pi}{3}\left|\tau\right.\right)\vartheta_3\left(z-\frac{\pi}{3}\left|\tau\right.\right)\\
=-6 q^{\frac32} d(2\tau)\vartheta_3\left(6z|{12}\tau\right).
\end{multline}
The above $d(\tau)$ and $d(y_{1},y_{2}|\tau)$ are defined by \eqref{Theta92} and \eqref{Theta97} respectively.

\item
Applications of \thmref{LUO6}.

Taking $n=3, a=2,b=1$ in \eqref{Luo60} of \thmref{LUO6}, then $x_{1}+x_{2}+y_{1}=0$. By \eqref{Luo61}, via some direct computations, we have
\begin{align*}
R_{1,4}\left(m,3;\tau \right)=3m q^{\frac32} c(x_{1}-y_{1},x_{2}-y_{1}|2\tau).
\end{align*}
Hence we obtain the folllowing identity for products of three theta functions:
\begin{multline}\label{Theta101}
\sum_{k=0}^{3m-1}\vartheta_1\left(z+x_{1}+\frac{k \pi
}{3m}\left|\tau\right.\right)\vartheta_1\left(z+x_{2}+\frac{k \pi
}{3m}\left|\tau\right.\right) \vartheta_4\left(z+y_{1}+\frac{k\pi
}{3m}\left|\tau\right.\right)\\=\left \{
\begin{array}{ll}
3m q^{\frac32} c(x_{1}-y_{1},x_{2}-y_{1}|2\tau)\vartheta_3\left(3mz|{3m^2}\tau\right), & \quad  m \ is \ even,\\
3m q^{\frac32} c(x_{1}-y_{1},x_{2}-y_{1}|2\tau)\vartheta_4\left(3mz|{3m^2}\tau\right), & \quad  m \ is \ odd.
\end{array}\right.
\end{multline}
Taking $m=2$ in the first equation of \eqref{Theta101}, we have
\begin{multline}\label{Theta103}
\vartheta_2\left(z+x_{1}\left|\tau\right.\right)\vartheta_2\left(z+x_{2}\left|\tau\right.\right) \vartheta_3\left(z+y_{1}\left|\tau\right.\right)
+\vartheta_1\left(z+x_{1}\left|\tau\right.\right)\vartheta_1\left(z+x_{2}\left|\tau\right.\right) \vartheta_4\left(z+y_{1}\left|\tau\right.\right)\\
+\vartheta_1\left(z+x_{1}+\frac{\pi}{6}\left|\tau\right.\right)\vartheta_1\left(z+x_{2}+\frac{\pi
}{6}\left|\tau\right.\right)\vartheta_4\left(z+y_{1}+\frac{\pi}{6}\left|\tau\right.\right)\\
+\vartheta_1\left(z+x_{1}-\frac{\pi}{6}\left|\tau\right.\right)\vartheta_1\left(z+x_{2}-\frac{\pi
}{6}\left|\tau\right.\right) \vartheta_4\left(z+y_{1}-\frac{\pi}{6}\left|\tau\right.\right)\\
+\vartheta_1\left(z+x_{1}+\frac{\pi}{3}\left|\tau\right.\right)\vartheta_1\left(z+x_{2}+\frac{\pi
}{3}\left|\tau\right.\right) \vartheta_4\left(z+y_{1}+\frac{\pi}{3}\left|\tau\right.\right)\\
+\vartheta_1\left(z+x_{1}-\frac{\pi}{3}\left|\tau\right.\right)\vartheta_1\left(z+x_{2}-\frac{\pi
}{3}\left|\tau\right.\right) \vartheta_4\left(z+y_{1}-\frac{\pi}{3}\left|\tau\right.\right)
\\=6 q^{\frac32} c(x_{1}-y_{1},x_{2}-y_{1}|2\tau)\vartheta_3\left(6z|{12}\tau\right).
\end{multline}
Setting $y_{1}=y_{2}=y_{3}=0$ in \eqref{Theta103}, we have
\begin{multline}
\vartheta_2^2\left(z+\left|\tau\right.\right)\vartheta_3\left(z\left|\tau\right.\right)
+\vartheta_1^2\left(z\left|\tau\right.\right) \vartheta_4\left(z\left|\tau\right.\right)\\
+\vartheta_1^2\left(z+\frac{\pi}{6}\left|\tau\right.\right)\vartheta_4\left(z+\frac{\pi}{6}\left|\tau\right.\right)
+\vartheta_1^2\left(z-\frac{\pi}{6}\left|\tau\right.\right)\vartheta_4\left(z-\frac{\pi}{6}\left|\tau\right.\right)\\
+\vartheta_1^2\left(z+\frac{\pi}{3}\left|\tau\right.\right)\vartheta_4\left(z+\frac{\pi}{3}\left|\tau\right.\right)
+\vartheta_1^2\left(z-\frac{\pi}{3}\left|\tau\right.\right)\vartheta_4\left(z-\frac{\pi}{3}\left|\tau\right.\right)\\
=6 q^{\frac32} c(2\tau)\vartheta_3\left(6z|{12}\tau\right).
\end{multline}
Taking $m=1$ in the second equation of \eqref{Theta101}, we have
\begin{multline}\label{Theta104}
\vartheta_1\left(z+x_{1}\left|\tau\right.\right)\vartheta_1\left(z+x_{2}\left|\tau\right.\right) \vartheta_4\left(z+y_{1}\left|\tau\right.\right)
+\vartheta_1\left(z+x_{1}+\frac{\pi}{3}\left|\tau\right.\right)\vartheta_1\left(z+x_{2}+\frac{\pi
}{3}\left|\tau\right.\right) \vartheta_4\left(z+y_{1}+\frac{\pi}{3}\left|\tau\right.\right)\\
+
\vartheta_1\left(z+x_{1}-\frac{\pi}{3}\left|\tau\right.\right)\vartheta_1\left(z+x_{2}-\frac{\pi
}{3}\left|\tau\right.\right) \vartheta_4\left(z+y_{1}-\frac{\pi}{3}\left|\tau\right.\right)
=3 q^{\frac32} c(x_{1}-y_{1},x_{2}-y_{1}|2\tau)\vartheta_4\left(3z|{3}\tau\right).
\end{multline}
The above $c(\tau)$ and $c(y_{1},y_{2}|\tau)$ are defined by \eqref{Theta93} and \eqref{Theta102} respectively.

\item
Applications of \thmref{LUO7}.

Taking $n=3, a=2,b=1$ in \eqref{Luo70} of \thmref{LUO7}, then $x_{1}+x_{2}+y_{1}=0$. By \eqref{Luo71}, via some direct computations, we have
\begin{align*}
R_{2,3}\left(m,3;\tau \right)=3m q^{\frac32} c(x_{1}-y_{1},x_{2}-y_{1}|2\tau).
\end{align*}
Hence we obtain the folllowing identity for products of three theta functions:
\begin{multline}\label{Theta105}
\sum_{k=0}^{3m-1}\vartheta_2\left(z+x_{1}+\frac{k \pi
}{3m}\left|\tau\right.\right)\vartheta_2\left(z+x_{2}+\frac{k \pi
}{3m}\left|\tau\right.\right) \vartheta_3\left(z+y_{1}+\frac{k\pi
}{3m}\left|\tau\right.\right)\\=3m q^{\frac32} c(x_{1}-y_{1},x_{2}-y_{1}|2\tau)\vartheta_3\left(3mz|{3m^2}\tau\right).
\end{multline}

Taking $m=1$ and $m=2$ in \eqref{Theta105} respectively, we get
\begin{multline}\label{Theta106}
\vartheta_2\left(z+x_{1}\left|\tau\right.\right)\vartheta_2\left(z+x_{2}\left|\tau\right.\right) \vartheta_3\left(z+y_{1}\left|\tau\right.\right)
+\vartheta_2\left(z+x_{1}+\frac{\pi}{3}\left|\tau\right.\right)\vartheta_2\left(z+x_{2}+\frac{\pi
}{3}\left|\tau\right.\right) \vartheta_3\left(z+y_{1}+\frac{\pi}{3}\left|\tau\right.\right)\\
+\vartheta_2\left(z+x_{1}-\frac{\pi}{3}\left|\tau\right.\right)\vartheta_2\left(z+x_{2}-\frac{\pi
}{3}\left|\tau\right.\right) \vartheta_3\left(z+y_{1}-\frac{\pi}{3}\left|\tau\right.\right)
=3 q^{\frac32} c(x_{1}-y_{1},x_{2}-y_{1}|2\tau)\vartheta_3\left(3z|{3}\tau\right)
\end{multline}
and
\begin{multline}\label{Theta107}
\vartheta_1\left(z+x_{1}\left|\tau\right.\right)\vartheta_1\left(z+x_{2}\left|\tau\right.\right) \vartheta_4\left(z+y_{1}\left|\tau\right.\right)
+\vartheta_2\left(z+x_{1}\left|\tau\right.\right)\vartheta_2\left(z+x_{2}\left|\tau\right.\right) \vartheta_3\left(z+y_{1}\left|\tau\right.\right)\\
+\vartheta_2\left(z+x_{1}+\frac{\pi}{6}\left|\tau\right.\right)\vartheta_2\left(z+x_{2}+\frac{\pi
}{6}\left|\tau\right.\right)\vartheta_3\left(z+y_{1}+\frac{\pi}{6}\left|\tau\right.\right)\\
+\vartheta_2\left(z+x_{1}-\frac{\pi}{6}\left|\tau\right.\right)\vartheta_2\left(z+x_{2}-\frac{\pi
}{6}\left|\tau\right.\right) \vartheta_3\left(z+y_{1}-\frac{\pi}{6}\left|\tau\right.\right)\\
+\vartheta_2\left(z+x_{1}+\frac{\pi}{3}\left|\tau\right.\right)\vartheta_2\left(z+x_{2}+\frac{\pi
}{3}\left|\tau\right.\right) \vartheta_3\left(z+y_{1}+\frac{\pi}{3}\left|\tau\right.\right)\\
+\vartheta_2\left(z+x_{1}-\frac{\pi}{3}\left|\tau\right.\right)\vartheta_2\left(z+x_{2}-\frac{\pi
}{3}\left|\tau\right.\right) \vartheta_3\left(z+y_{1}-\frac{\pi}{3}\left|\tau\right.\right)
\\=6 q^{\frac32} c(x_{1}-y_{1},x_{2}-y_{1}|2\tau)\vartheta_3\left(6z|{12}\tau\right).
\end{multline}
Setting $y_{1}=y_{2}=y_{3}=0$ in \eqref{Theta106} and \eqref{Theta107} respectively, we get 
\begin{multline}
\vartheta_2^2\left(z\left|\tau\right.\right) \vartheta_3\left(z\left|\tau\right.\right)
+\vartheta_2^2\left(z+\frac{\pi}{3}\left|\tau\right.\right)\vartheta_3\left(z+\frac{\pi}{3}\left|\tau\right.\right)+\vartheta_2^2\left(z-\frac{\pi}{3}\left|\tau\right.\right) \vartheta_3\left(z-\frac{\pi}{3}\left|\tau\right.\right)
=3 q^{\frac32} c(2\tau)\vartheta_3\left(3z|{3}\tau\right)
\end{multline}
and
\begin{multline}
\vartheta_1^2\left(z+\left|\tau\right.\right)\vartheta_4\left(z\left|\tau\right.\right)
+\vartheta_2^2\left(z\left|\tau\right.\right) \vartheta_3\left(z\left|\tau\right.\right)\\
+\vartheta_2^2\left(z+\frac{\pi}{6}\left|\tau\right.\right)\vartheta_3\left(z+\frac{\pi}{6}\left|\tau\right.\right)
+\vartheta_2^2\left(z-\frac{\pi}{6}\left|\tau\right.\right)\vartheta_3\left(z-\frac{\pi}{6}\left|\tau\right.\right)\\
+\vartheta_2^2\left(z+\frac{\pi}{3}\left|\tau\right.\right)\vartheta_3\left(z+\frac{\pi}{3}\left|\tau\right.\right)
+\vartheta_2^2\left(z-\frac{\pi}{3}\left|\tau\right.\right)\vartheta_3\left(z-\frac{\pi}{3}\left|\tau\right.\right)\\
=6 q^{\frac32} c(2\tau)\vartheta_3\left(6z|{12}\tau\right).
\end{multline}
The above $c(\tau)$ and $c(y_{1},y_{2}|\tau)$ are defined by \eqref{Theta92} and \eqref{Theta97} respectively.

\item
Applications of \thmref{LUO8}.

Taking $n=3, a=2,b=1$ in \thmref{LUO8}, then $x_{1}+x_{2}+y_{1}=0$. By \eqref{Luo81}, via some direct computations, we have
\begin{align*}
R_{2,4}\left(m,3;\tau \right)=-3m q^{\frac32} d(x_{1}-y_{1},x_{2}-y_{1}|2\tau).
\end{align*}
Hence we obtain the folllowing identity for products of three theta functions:
\begin{multline}\label{Theta108}
\sum_{k=0}^{3m-1}\vartheta_2\left(z+x_{1}+\frac{k \pi
}{3m}\left|\tau\right.\right)\vartheta_2\left(z+x_{2}+\frac{k \pi
}{3m}\left|\tau\right.\right) \vartheta_4\left(z+y_{1}+\frac{k\pi
}{3m}\left|\tau\right.\right)\\=\left \{
\begin{array}{ll}
-3m q^{\frac32} d(x_{1}-y_{1},x_{2}-y_{1}|2\tau)\vartheta_3\left(3mz|{3m^2}\tau\right), & \quad  m \ is \ even,\\
-3m q^{\frac32} d(x_{1}-y_{1},x_{2}-y_{1}|2\tau)\vartheta_4\left(3mz|{3m^2}\tau\right), & \quad  m \ is \ odd.
\end{array}\right.
\end{multline}
Taking $m=2$ in the first equation of \eqref{Theta108}, we get
\begin{multline}\label{Theta109}
\vartheta_1\left(z+x_{1}\left|\tau\right.\right)\vartheta_1\left(z+x_{2}\left|\tau\right.\right) \vartheta_3\left(z+y_{1}\left|\tau\right.\right)
+\vartheta_2\left(z+x_{1}\left|\tau\right.\right)\vartheta_2\left(z+x_{2}\left|\tau\right.\right) \vartheta_4\left(z+y_{1}\left|\tau\right.\right)\\
+\vartheta_2\left(z+x_{1}+\frac{\pi}{6}\left|\tau\right.\right)\vartheta_2\left(z+x_{2}+\frac{\pi
}{6}\left|\tau\right.\right)\vartheta_4\left(z+y_{1}+\frac{\pi}{6}\left|\tau\right.\right)\\
+\vartheta_2\left(z+x_{1}-\frac{\pi}{6}\left|\tau\right.\right)\vartheta_2\left(z+x_{2}-\frac{\pi
}{6}\left|\tau\right.\right) \vartheta_4\left(z+y_{1}-\frac{\pi}{6}\left|\tau\right.\right)\\
+\vartheta_2\left(z+x_{1}+\frac{\pi}{3}\left|\tau\right.\right)\vartheta_2\left(z+x_{2}+\frac{\pi
}{3}\left|\tau\right.\right) \vartheta_4\left(z+y_{1}+\frac{\pi}{3}\left|\tau\right.\right)\\
+\vartheta_2\left(z+x_{1}-\frac{\pi}{3}\left|\tau\right.\right)\vartheta_2\left(z+x_{2}-\frac{\pi
}{3}\left|\tau\right.\right) \vartheta_4\left(z+y_{1}-\frac{\pi}{3}\left|\tau\right.\right)\\
=-6 q^{\frac32} d(x_{1}-y_{1},x_{2}-y_{1}|2\tau)\vartheta_3\left(6z|{12}\tau\right).
\end{multline}
Setting $y_{1}=y_{2}=y_{3}=0$ in \eqref{Theta103}, we get 
\begin{multline}
\vartheta_1^2\left(z+\left|\tau\right.\right)\vartheta_3\left(z\left|\tau\right.\right)
+\vartheta_2^2\left(z\left|\tau\right.\right) \vartheta_4\left(z\left|\tau\right.\right)\\
+\vartheta_2^2\left(z+\frac{\pi}{6}\left|\tau\right.\right)\vartheta_4\left(z+\frac{\pi}{6}\left|\tau\right.\right)
+\vartheta_2^2\left(z-\frac{\pi}{6}\left|\tau\right.\right)\vartheta_4\left(z-\frac{\pi}{6}\left|\tau\right.\right)\\
+\vartheta_2^2\left(z+\frac{\pi}{3}\left|\tau\right.\right)\vartheta_4\left(z+\frac{\pi}{3}\left|\tau\right.\right)
+\vartheta_2^2\left(z-\frac{\pi}{3}\left|\tau\right.\right)\vartheta_4\left(z-\frac{\pi}{3}\left|\tau\right.\right)\\
=-6 q^{\frac32} d(2\tau)\vartheta_3\left(6z|{12}\tau\right).
\end{multline}
Taking $m=1$ in the second equation of \eqref{Theta108}, we get
\begin{multline}\label{Theta110}
\vartheta_2\left(z+x_{1}\left|\tau\right.\right)\vartheta_2\left(z+x_{2}\left|\tau\right.\right) \vartheta_4\left(z+y_{1}\left|\tau\right.\right)\\
+\vartheta_2\left(z+x_{1}+\frac{\pi}{3}\left|\tau\right.\right)\vartheta_2\left(z+x_{2}+\frac{\pi
}{3}\left|\tau\right.\right) \vartheta_4\left(z+y_{1}+\frac{\pi}{3}\left|\tau\right.\right)\\
+
\vartheta_2\left(z+x_{1}-\frac{\pi}{3}\left|\tau\right.\right)\vartheta_2\left(z+x_{2}-\frac{\pi
}{3}\left|\tau\right.\right) \vartheta_4\left(z+y_{1}-\frac{\pi}{3}\left|\tau\right.\right)\\
=-3 q^{\frac32} d(x_{1}-y_{1},x_{2}-y_{1}|2\tau)\vartheta_4\left(3z|{3}\tau\right).
\end{multline}
Setting $y_{1}=y_{2}=y_{3}=0$ in \eqref{Theta110}, we get 
\begin{multline}
\vartheta_2^2\left(z\left|\tau\right.\right)\vartheta_4\left(z\left|\tau\right.\right)
+\vartheta_2^2\left(z+\frac{\pi}{3}\left|\tau\right.\right)\vartheta_4\left(z+\frac{\pi}{3}\left|\tau\right.\right)\\
+\vartheta_2^2\left(z-\frac{\pi}{3}\left|\tau\right.\right)\vartheta_4\left(z-\frac{\pi}{3}\left|\tau\right.\right)
=-3 q^{\frac32} d(2\tau)\vartheta_4\left(3z|{3}\tau\right).
\end{multline}
The above $d(\tau)$ and $d(y_{1},y_{2}|\tau)$ are defined by \eqref{Theta92} and \eqref{Theta97} respectively.

\item
Applications of \thmref{LUO9}.

{\bf Cases 1.} \   Taking $n=3, a=2,b=1$ in \thmref{LUO9}, then $x_{1}+x_{2}+y_{1}=0$. By \eqref{Luo91}, via some direct computations, we have
\begin{align*}
R_{3,4}\left(m,3;\tau \right)=3m b(x_{1}-y_{1},x_{2}-y_{1}|2\tau).
\end{align*}
Hence we obtain the folllowing identity for products of three theta functions:
\begin{multline}\label{Theta111}
\sum_{k=0}^{3m-1}\vartheta_3\left(z+x_{1}+\frac{k \pi
}{3m}\left|\tau\right.\right)\vartheta_3\left(z+x_{2}+\frac{k \pi
}{3m}\left|\tau\right.\right) \vartheta_4\left(z+y_{1}+\frac{k\pi
}{3m}\left|\tau\right.\right)\\=\left \{
\begin{array}{ll}
3m b(x_{1}-y_{1},x_{2}-y_{1}|2\tau)\vartheta_3\left(3mz|{3m^2}\tau\right), & \quad  m \ is \ even,\\
3m b(x_{1}-y_{1},x_{2}-y_{1}|2\tau)\vartheta_4\left(3mz|{3m^2}\tau\right), & \quad  m \ is \ odd.
\end{array}\right.
\end{multline}
Taking $m=2$ in the first equation of \eqref{Theta111}, we get
\begin{multline}\label{Theta112}
\vartheta_4\left(z+x_{1}\left|\tau\right.\right)\vartheta_4\left(z+x_{2}\left|\tau\right.\right) \vartheta_3\left(z+y_{1}\left|\tau\right.\right)
+\vartheta_3\left(z+x_{1}\left|\tau\right.\right)\vartheta_3\left(z+x_{2}\left|\tau\right.\right) \vartheta_4\left(z+y_{1}\left|\tau\right.\right)\\
+\vartheta_3\left(z+x_{1}+\frac{\pi}{6}\left|\tau\right.\right)\vartheta_3\left(z+x_{2}+\frac{\pi
}{6}\left|\tau\right.\right)\vartheta_4\left(z+y_{1}+\frac{\pi}{6}\left|\tau\right.\right)\\
+\vartheta_3\left(z+x_{1}-\frac{\pi}{6}\left|\tau\right.\right)\vartheta_3\left(z+x_{2}-\frac{\pi
}{6}\left|\tau\right.\right) \vartheta_4\left(z+y_{1}-\frac{\pi}{6}\left|\tau\right.\right)\\
+\vartheta_3\left(z+x_{1}+\frac{\pi}{3}\left|\tau\right.\right)\vartheta_3\left(z+x_{2}+\frac{\pi
}{3}\left|\tau\right.\right) \vartheta_4\left(z+y_{1}+\frac{\pi}{3}\left|\tau\right.\right)\\
+\vartheta_3\left(z+x_{1}-\frac{\pi}{3}\left|\tau\right.\right)\vartheta_3\left(z+x_{2}-\frac{\pi
}{3}\left|\tau\right.\right) \vartheta_4\left(z+y_{1}-\frac{\pi}{3}\left|\tau\right.\right)
\\=6 b(x_{1}-y_{1},x_{2}-y_{1}|2\tau)\vartheta_3\left(6z|{12}\tau\right).
\end{multline}
Setting $y_{1}=y_{2}=y_{3}=0$ in \eqref{Theta112}, we get 
\begin{multline}
\vartheta_4^2\left(z+\left|\tau\right.\right)\vartheta_3\left(z\left|\tau\right.\right)
+\vartheta_3^2\left(z\left|\tau\right.\right) \vartheta_4\left(z\left|\tau\right.\right)\\
+\vartheta_3^2\left(z+\frac{\pi}{6}\left|\tau\right.\right)\vartheta_4\left(z+\frac{\pi}{6}\left|\tau\right.\right)
+\vartheta_3^2\left(z-\frac{\pi}{6}\left|\tau\right.\right)\vartheta_4\left(z-\frac{\pi}{6}\left|\tau\right.\right)\\
+\vartheta_3^2\left(z+\frac{\pi}{3}\left|\tau\right.\right)\vartheta_4\left(z+\frac{\pi}{3}\left|\tau\right.\right)
+\vartheta_3^2\left(z-\frac{\pi}{3}\left|\tau\right.\right)\vartheta_4\left(z-\frac{\pi}{3}\left|\tau\right.\right)\\
=6 b(2\tau)\vartheta_3\left(6z|{12}\tau\right).
\end{multline}
Taking $m=1$ in the second equation of \eqref{Theta112}, we get
\begin{multline}\label{Theta113}
\vartheta_3\left(z+x_{1}\left|\tau\right.\right)\vartheta_3\left(z+x_{2}\left|\tau\right.\right) \vartheta_4\left(z+y_{1}\left|\tau\right.\right)\\
+\vartheta_3\left(z+x_{1}+\frac{\pi}{3}\left|\tau\right.\right)\vartheta_3\left(z+x_{2}+\frac{\pi
}{3}\left|\tau\right.\right) \vartheta_4\left(z+y_{1}+\frac{\pi}{3}\left|\tau\right.\right)\\
+
\vartheta_3\left(z+x_{1}-\frac{\pi}{3}\left|\tau\right.\right)\vartheta_3\left(z+x_{2}-\frac{\pi
}{3}\left|\tau\right.\right) \vartheta_4\left(z+y_{1}-\frac{\pi}{3}\left|\tau\right.\right)\\
=3 b(x_{1}-y_{1},x_{2}-y_{1}|2\tau)\vartheta_4\left(3z|{3}\tau\right).
\end{multline}
Setting $y_{1}=y_{2}=y_{3}=0$ in \eqref{Theta113}, we get 
\begin{multline}
\vartheta_3^2\left(z\left|\tau\right.\right)\vartheta_4\left(z\left|\tau\right.\right)
+\vartheta_3^2\left(z+\frac{\pi}{3}\left|\tau\right.\right)\vartheta_4\left(z+\frac{\pi}{3}\left|\tau\right.\right)
+\vartheta_3^2\left(z-\frac{\pi}{3}\left|\tau\right.\right)\vartheta_4\left(z-\frac{\pi}{3}\left|\tau\right.\right)
=3 b(2\tau)\vartheta_4\left(3z|{3}\tau\right).
\end{multline}
The above $b(\tau)$ and $b(y_{1},y_{2}|\tau)$ are defined by \eqref{Theta90} and \eqref{Theta95} respectively.

{\bf Cases 2.} \  Taking $n=3, a=1,b=2$ in \thmref{LUO9}, then $x_{1}+x_{2}+y_{1}=0$. By  \eqref{Luo91}, via some direct computations, we have
\begin{align*}
R_{3,4}\left(m,3;\tau \right)=3m b(y_{1}-x_{1},y_{2}-x_{1}|2\tau).
\end{align*}
Noting that $mb$ is even, we obtain the folllowing identity for products of three theta functions:
\begin{multline}\label{Theta114}
\sum_{k=0}^{3m-1}\vartheta_3\left(z+x_{1}+\frac{k \pi
}{3m}\left|\tau\right.\right)\vartheta_4\left(z+y_{1}+\frac{k \pi
}{3m}\left|\tau\right.\right) \vartheta_4\left(z+y_{2}+\frac{k\pi
}{3m}\left|\tau\right.\right)\\=3m b(y_{1}-x_{1},y_{2}-x_{1}|2\tau)\vartheta_3\left(3mz|{3m^2}\tau\right).
\end{multline}
Taking $m=2$ in  \eqref{Theta114}, we get
\begin{multline}\label{Theta115}
\vartheta_4\left(z+x_{1}\left|\tau\right.\right)\vartheta_3\left(z+y_{1}\left|\tau\right.\right) \vartheta_3\left(z+y_{2}\left|\tau\right.\right)
+\vartheta_3\left(z+x_{1}\left|\tau\right.\right)\vartheta_4\left(z+y_{1}\left|\tau\right.\right) \vartheta_4\left(z+y_{2}\left|\tau\right.\right)\\
+\vartheta_3\left(z+x_{1}+\frac{\pi}{6}\left|\tau\right.\right)\vartheta_4\left(z+y_{1}+\frac{\pi
}{6}\left|\tau\right.\right) \vartheta_4\left(z+y_{2}+\frac{\pi}{6}\left|\tau\right.\right)\\
+\vartheta_3\left(z+x_{1}-\frac{\pi}{6}\left|\tau\right.\right)\vartheta_4\left(z+y_{1}-\frac{\pi
}{6}\left|\tau\right.\right) \vartheta_4\left(z+y_{2}-\frac{\pi}{6}\left|\tau\right.\right)\\
+\vartheta_3\left(z+x_{1}+\frac{\pi}{3}\left|\tau\right.\right)\vartheta_4\left(z+y_{1}+\frac{\pi
}{3}\left|\tau\right.\right) \vartheta_4\left(z+y_{2}+\frac{\pi}{3}\left|\tau\right.\right)\\
+\vartheta_3\left(z+x_{1}-\frac{\pi}{3}\left|\tau\right.\right)\vartheta_4\left(z+y_{1}-\frac{\pi
}{3}\left|\tau\right.\right) \vartheta_4\left(z+y_{2}-\frac{\pi}{3}\left|\tau\right.\right)\\
=6 b(y_{1}-x_{1},y_{2}-x_{1}|2\tau)\vartheta_3\left(6z|{12}\tau\right).
\end{multline}
Setting $x_{1}=y_{1}=y_{2}=0$ in \eqref{Theta115}, we get 
\begin{multline}
\vartheta_4\left(z\left|\tau\right.\right)\vartheta_3^2\left(z\left|\tau\right.\right) 
+\vartheta_3\left(z\left|\tau\right.\right)\vartheta_4^2\left(z\left|\tau\right.\right)\\
+\vartheta_3\left(z+\frac{\pi}{6}\left|\tau\right.\right)\vartheta_4^2\left(z+\frac{\pi
}{6}\left|\tau\right.\right)
+\vartheta_3\left(z-\frac{\pi}{6}\left|\tau\right.\right)\vartheta_4^2\left(z-\frac{\pi
}{6}\left|\tau\right.\right)\\
+\vartheta_3\left(z+\frac{\pi}{3}\left|\tau\right.\right)\vartheta_4^2\left(z+\frac{\pi
}{3}\left|\tau\right.\right)
+\vartheta_3\left(z-\frac{\pi}{3}\left|\tau\right.\right)\vartheta_4^2\left(z-\frac{\pi
}{3}\left|\tau\right.\right)\\
=6 b(2\tau)\vartheta_3\left(6z|{12}\tau\right).
\end{multline}
Taking $m=1$ in  \eqref{Theta114}, we get
\begin{multline}\label{Theta116}
\vartheta_3\left(z+x_{1}\left|\tau\right.\right)\vartheta_4\left(z+y_{1}\left|\tau\right.\right) \vartheta_4\left(z+y_{2}\left|\tau\right.\right)\\
+\vartheta_3\left(z+x_{1}+\frac{\pi}{3}\left|\tau\right.\right)\vartheta_4\left(z+y_{1}+\frac{\pi
}{3}\left|\tau\right.\right) \vartheta_4\left(z+y_{2}+\frac{\pi}{3}\left|\tau\right.\right)\\
+\vartheta_3\left(z+x_{1}-\frac{\pi}{3}\left|\tau\right.\right)\vartheta_4\left(z+y_{1}-\frac{\pi
}{3}\left|\tau\right.\right) \vartheta_4\left(z+y_{2}-\frac{\pi}{3}\left|\tau\right.\right)\\
=3 b(2\tau)\vartheta_3\left(3z|{3}\tau\right).
\end{multline}
Setting $x_{1}=y_{1}=y_{2}=0$ in \eqref{Theta116}, we get 
\begin{equation}
\vartheta_3\left(z\left|\tau\right.\right)\vartheta_4^2\left(z\left|\tau\right.\right) 
+\vartheta_3\left(z+\frac{\pi}{3}\left|\tau\right.\right)\vartheta_4^2\left(z+\frac{\pi
}{3}\left|\tau\right.\right)
+\vartheta_3\left(z-\frac{\pi}{3}\left|\tau\right.\right)\vartheta_4^2\left(z-\frac{\pi
}{3}\left|\tau\right.\right)
=3 b(2\tau)\vartheta_3\left(3z|{3}\tau\right).
\end{equation}
The above $b(\tau)$ and $b(y_{1},y_{2}|\tau)$ are defined by \eqref{Theta90} and \eqref{Theta95} respectively.

\item
Applications of \thmref{cor400}.

Taking $n=3$ in \eqref{cor401} of \thmref{cor400}, for $y_{1}+y_{2}+y_{3}=0$, by \eqref{cor402} we obtain
$$R_{4,4}\left(y_{1},y_{2},;\tau\right)=3ma(y_{1}-y_{3}, y_{2}-y_{3}|2\tau).$$
We hence have
\begin{align}\label{Theta81}
\sum_{k=0}^{3m-1}\vartheta_{4}(z+y_{1}+\frac{k\pi}{3m}|\tau)\vartheta_{4}(z+y_{2}+\frac{k\pi}{3m}|\tau)\vartheta_{4}(z+y_{3}+\frac{k\pi}{3m}|\tau)=3ma(y_{1}-y_{3}, y_{2}-y_{3}|2\tau)\vartheta_{3}(3mz|3m^{2}\tau).
\end{align}
Taking $m=2$ in \eqref{Theta81}, we get
\begin{multline}\label{Theta82}
\vartheta_{3}(z+y_{1}|\tau)\vartheta_{3}(z+y_{2}|\tau)\vartheta_{3}(z+y_{3}|\tau)+\vartheta_{4}(z+y_{1}|\tau)\vartheta_{4}(z+y_{2}|\tau)\vartheta_{4}(z+y_{3}|\tau)\\
+\vartheta_{4}(z+y_{1}+\frac{\pi}{6}|\tau)\vartheta_{4}(z+y_{2}+\frac{\pi}{6}|\tau)\vartheta_{4}(z+y_{3}+\frac{\pi}{6}|\tau)\\
+\vartheta_{4}(z+y_{1}-\frac{\pi}{6}|\tau)\vartheta_{4}(z+y_{2}-\frac{\pi}{6}|\tau)\vartheta_{4}(z+y_{3}-\frac{\pi}{6}|\tau) \\
+\vartheta_{4}(z+y_{1}+\frac{\pi}{3}|\tau)\vartheta_{4}(z+y_{2}+\frac{\pi}{3}|\tau)\vartheta_{4}(z+y_{3}+\frac{\pi}{3}|\tau)\\
+\vartheta_{4}(z+y_{1}-\frac{\pi}{3}|\tau)\vartheta_{4}(z+y_{2}-\frac{\pi}{3}|\tau)\vartheta_{4}(z+y_{3}-\frac{\pi}{3}|\tau) \\
=6a(y_{1}-y_{3}, y_{2}-y_{3}|2\tau)\vartheta_{3}(6z|12\tau).
\end{multline}
Setting $y_{1}=y_{2}=y_{3}=0$ in \eqref{Theta82}, we get 
\begin{align}\label{Theta83}
\vartheta_{3}^3(z|\tau)+\vartheta_{4}^3(z|\tau)
+\vartheta_{4}^3(z+\frac{\pi}{6}|\tau)+\vartheta_{4}^3(z-\frac{\pi}{6}|\tau)+\vartheta_{4}^3(z+\frac{\pi}{3}|\tau)+\vartheta_{4}^3(z-\frac{\pi}{3}|\tau)=6a(2\tau)\vartheta_{3}(6z|12\tau).
\end{align}
The above $a(\tau)$ and $a(y_{1},y_{2}|\tau)$ are defined by \eqref{Theta80} and \eqref{Theta94} respectively.
\end{itemize}

\begin{rem}
In \cite{Cao2011a}, Zhu Cao obtained some identities for products of Ramanujan's theta functions $f(a,b)$. No doubt many interesting identities of theta functions can be formulated from the  \thmref{cor149}--\thmref{cor400} and \thmref{LUO20}--\thmref{LUO38}, we here omit them.
\end{rem}


\begin{thebibliography}{99}
\bibitem{Ahlgren2000}
S. Ahlgren, The sixth, eighth, ninth and tenth powers of Ramanujan theta function, \textit{Proc. Amer. Math. Soc.}, \textbf{128} (2000) 1333--1338.

\bibitem{AndrewsB2012}
G. E. Andrews, B. C. Berndt, \textit{Ramanujan's Lost Notebook Part III}, Springer-Verlag, New York, 2012.

\bibitem{Bellman}
R. Bellman, \textit{A brief introduction to theta functions}, Holt, Rinehart and Winston, New York,
1961.

\bibitem{Berndt1991}
B. C. Berndt, \textit{Ramanujan's Notebooks Part III}, Springer-Verlag, New York, 1991.

\bibitem{Berndt1998}
B. C. Berndt, \textit{Ramanujan's Notebooks Part V}, Springer-Verlag, New York, 1998.

\bibitem{Boon1982}
M. Boon, M. L. Glasser, J. Zak and I. J. Zucker, Additive decompositions of $\theta$-functions of multiple arguments, \textit{J. Phys. A: Math. Gen.}, \textbf{15} (1982), 3439--3440.

\bibitem{Borwein1991}
J. M. Borwein and P. B. Borwein, A cubic counterpart of Jacobi's identity and the AGM,
 \textit{Trans. Amer. Math. Soc.}, \textbf{323} (1991), 691--701.

\bibitem{luo2013a}
Y. Cai, S. Chen and Q.-M. Luo, Some circular summation formulas for the theta functions, \textit{Bound. Value Probl.}, \textbf{2013}, 2013:59.

\bibitem{luo2014a}
Y. Cai, Y.-Q. Bi and Q.-M. Luo, Some new circular summation formulas of theta functions, \textit{Integral Transform. Spec. Funct.}, \textbf{25} (2014), 497--507.

\bibitem{Cao2011a}
Z. Cao, Integer matrix exact covering systems and product identities for theta functions, 
\textit{International Mathematics Research Notices}, \textbf{19} (2011), 4471--4514.

\bibitem{ChanSH2010a}
S. H. Chan, Z.-G. Liu, On a new circular summation of theta functions, \textit{J. Number Theory}, \textbf{130} (2010), 1190--1196.

\bibitem{ChanHH2006a}
H. H. Chan, Z.-G. Liu, S. T. Ng, Circular summation of theta functions in Ramanujan's Lost Notebook, \textit{J. Math. Anal. Appl.}, \textbf{316} (2006), 628--641.

\bibitem{Chandrasekharan}
K. Chandrasekharan, \textit{Elliptic Functions}, Springer-Verlag, Berlin, Heidelberg, 1985.

\bibitem{Chua2002}
K.S. Chua, The root lattice $A_n^{\ast}$ and Ramanujan's circular summation of theta functions, \textit{Proc. Amer. Math. Soc.}, \textbf{130} (2002), 1--8.

\bibitem{Chua2001}
K.S. Chua, Circular summation of the 13th powers of Ramanujan's theta function, \textit{Ramanujan J.}, \textbf{5} (2001), 353--354.

\bibitem{LiuZG2005a}
Z.-G. Liu, A three-term theta function identity and its applications, \textit{Adv. Math.}, \textbf{195} (2005), 1--23.

\bibitem{LiuZG2005b}
Z.-G. Liu, A theta function identity and its implications, \textit{Trans. Amer. Math. Soc.}, \textbf{357} (2005), 825--835.

\bibitem{LiuZG2012a}
Z.-G. Liu, Some inverse relations and theta function identities, \textit{Int. J. Number Theory}, \textbf{8} (2012), 1977--2002.

\bibitem{Montaldi1989}
E. Montaldi, G. Zucchelli, Additive decomposition for the product of two $\vartheta_3$ functions and modular
equations, \textit{J. Math. Phys.}, \textbf{30} (1989), 2012--2015.


\bibitem{Murayama1998}
T. Murayama, On an identity of theta functions obtained from weight enumerators
of linear codes, \textit{Proc. Japan Acad., Ser. A}, \textbf{74} (1998), 98--100.

\bibitem{Ono1999}
K. Ono, On the circular summation of the eleventh powers of Ramanujan's theta function, \textit{J. Number Theory}, \textbf{76} (1999), 62--65.

\bibitem{Rama1988}
S. Ramanujan, \textit{The Lost Notebook and Other Unpublished Papers}, Narosa, New Delhi, 1988.

\bibitem{Rang1994}
S.S. Rangachari, On a result of Ramanujan on theta functions,  \textit{J. Number Theory}, \textbf{48} (1994), 364--372.

\bibitem{Shen}
L. C. Shen, On the additive formula of the theta functions and a collection of Lambert series pertaining to the modular equations of degree 5, \textit{Trans. Amer. Math. Soc.}, \textbf{345} (1994), 323--345.

\bibitem{Son2004}
S. H. Son, Circular summation of theta functions in Ramanujan's Lost Notebook, \textit{Ramanujan J.}, \textbf{8} (2004) 235--272.

\bibitem{Whittaker}
E. T. Whittaker, G. N. Watson, A Course of Modern Analysis, Cambridge University Press, 1927, 4th ed, Reprinted 1963.

\bibitem{Xu2012}
P. Xu, An elementary proof of Ramanujan's circular summation formula and its generalizations,  \textit{Ramanujan J.}, \textbf{27} (2012) 409--417.

\bibitem{Zeng2009}
X.-F. Zeng, A generalized circular summation of theta function and its application, \textit{J. Math. Anal. Appl.}, \textbf{356} (2009), 698--703.

\bibitem{Zhu2012a}
J.-M. Zhu, An alternate circular summation formula of theta functions and its applications, \textit{Appl. Anal. Discrete Math.}, \textbf{6} (2012), 114--125.

\bibitem{Zhu2012b}
J.-M. Zhu, A note on a generalized circular summation formula of theta functions, \textit{J. Number Theory.}, \textbf{132} (2012), 1164--1169.

\end{thebibliography}
\end{document}